\newtheorem{theorem}{Theorem}[section]
\newtheorem{corollary}[theorem]{Corollary}
\newtheorem{lemma}[theorem]{Lemma}
\newtheorem{proposition}[theorem]{Proposition}
\theoremstyle{remark}
\newtheorem{remark}[theorem]{Remark}
\def\d{{\rm d}}
\def \l {\langle}
\def \r {\rangle}
\def\H{\mathbf{H}}
\def\W{\mathbf{W}}
\def\L{\mathbf{L}}
\def\f{\textbf{\textit{f}}}
\def\uu{\textbf{\textit{u}}}
\def\UU{\textbf{\textit{U}}}
\def\vv{\textbf{\textit{v}}}
\def\ww{\textbf{\textit{w}}}
\def\ddt{\frac{\d}{\d t}}
\def\n{\textbf{\textit{n}}}
\def\div{\mathrm{div}}
\def\J{\mathbf{J}}
\def\TT{\mathcal{T}}
\def\wTT{\widetilde{\mathcal{T}}}
\newcommand{\Hone}{H^1_{(0)}}
\newcommand{\dom}{\operatorname{dom}}
\newcommand{\habil}[1]{}
\newcommand{\dist}{\operatorname{dist}}
\newcommand{\loc}{\operatorname{loc}}
\newcommand{\uloc}{\operatorname{uloc}}
\newcommand{\supp}{\operatorname{supp}}
\newcommand{\eps}{\ensuremath{\varepsilon}}
\newcommand{\N}{\ensuremath{\mathbb{N}}}
\newcommand{\R}{\ensuremath{\mathbb{R}}}
\def \au {\rm}
\def \ti {\it}
\def \jou {\rm}
\def \bk {\it}
\def \no#1#2#3 {{\bf #1} (#3), #2.}
\def \eds#1#2#3 {#1, #2, #3.}
\begin{document}

\title[Regularity and stabilization for the incompressible NSCH model with unmatched densities]
{Global regularity and asymptotic stabilization for the incompressible Navier-Stokes-Cahn-Hilliard model with unmatched densities}
\author[Helmut Abels, Harald Garcke \& Andrea Giorgini]{Helmut Abels, Harald Garcke \& Andrea Giorgini}

\address{Fakult\"{a}t f\"{u}r Mathematik\\ 
Universit\"{a}t Regensburg \\
93040 Regensburg, Germany\\
\href{mailto:helmut.abels@ur.de}{helmut.abels@ur.de}\\
\href{mailto:harald.garcke@ur.de}{harald.garcke@ur.de}
}

\address{Politecnico di Milano\\
Dipartimento di Matematica\\
Via E. Bonardi 9, 20133 Milano, Italy\\
\href{mailto:andrea.giorgini@polimi.it}{andrea.giorgini@polimi.it}}

\subjclass[2020]{35B40, 35Q30, 35Q35, 76D03, 76D05, 76D45, 76T06}


%
\keywords{Two-phase flows, diffuse interface models, Cahn-Hilliard equation, Navier-Stokes equation, convergence to equilibrium}

\maketitle

\begin{abstract}
We study an initial-boundary value problem for the incompressible Navier-Stokes-Cahn-Hilliard system with non-constant density proposed by Abels, Garcke and Gr\"{u}n in 2012. This model arises in the diffuse interface theory for binary mixtures of viscous incompressible fluids. This system is a generalization of the well-known model H in the case of fluids with unmatched densities. In three dimensions, we prove that any global weak solution (for which uniqueness is not known) exhibits a propagation of regularity in time and stabilizes towards an equilibrium state as $t \rightarrow \infty$. More precisely, the concentration function $\phi$ is a {\it strong} solution of the Cahn-Hilliard equation for (arbitrary) positive times, whereas the velocity field $\uu$ becomes a {\it strong} solution of the momentum equation for large times. Our analysis hinges upon the following key points: a novel global regularity result (with explicit bounds) for the Cahn-Hilliard equation with divergence-free velocity belonging only to $L^2(0,\infty; \H^1_{0,\sigma}(\Omega))$, the energy dissipation of the system, the separation property for large times, a {\it weak-strong} uniqueness type result, and the Lojasiewicz-Simon inequality. Additionally, in two dimensions, we show the existence and uniqueness of global {\it strong} solutions for the full system. 
Finally, we discuss the existence of global weak solutions for the case of the double obstacle potential. 
\end{abstract}

\section{Introduction}
\label{1}

We consider the initial-boundary value problem for the diffuse interface model describing a two-phase flow of incompressible viscous Newtonian fluids in the case of general densities, which was introduced by Abels, Garcke, and Gr\"{u}n in \cite{AGG2012} (also called AGG-model nowadays). This model leads to the system 
\begin{equation}
 \label{AGG}
\begin{cases}
\partial_t \left( \rho(\phi)\uu\right) + \div \left( \uu \otimes \left( \rho(\phi) \uu + \widetilde{\J} \right) \right) - \div \left( \nu(\phi) D \uu \right) + \nabla P= - \div \left( \nabla \phi \otimes \nabla \phi \right),\\
\div \, \uu=0,\\
\partial_t \phi +\uu\cdot \nabla \phi = \div \left(m(\phi)\nabla \mu \right),\\
\mu= -\Delta \phi+\Psi'(\phi),
\end{cases}
\end{equation}
in $\Omega \times (0,\infty)$, where $\Omega$ is a bounded domain in $\mathbb{R}^d$, with $d=2$ and $d=3$. The system \eqref{AGG} is 
completed with the following boundary and initial conditions
\begin{equation}
\label{AGG-bc}
\begin{cases}
\uu=\mathbf{0}, \quad \partial_\n\phi=\partial_\n \mu=0 \quad &\text{on }  \partial \Omega \times (0,T),\\
\uu|_{t=0}=\uu_0, \quad \phi|_{t=0}=\phi_0 \quad &\text{in } \Omega.
\end{cases}
\end{equation}
Here, $\n$ is the unit outward normal vector on $\partial \Omega$, and
 $\partial_\n$ denotes the outer normal derivative on $\partial \Omega$.
The state variables of the system are the volume averaged velocity $\uu\colon \Omega\times [0,\infty)\to \R^d, (x,t)\mapsto \uu(x,t)$, the pressure of the mixture $P\colon \Omega \times [0,\infty)\to \R, (x,t)\mapsto P(x,t)$, and the difference of the fluids volume fractions $\phi\colon \Omega \times [0,\infty)\to [-1,1], (x,t)\mapsto \phi(x,t)$. Here $D \uu= \frac12 (\nabla \uu +(\nabla \uu)^T)$ is the symmetrized gradient of $\uu$. The flux term $\widetilde{\J}$, the density $\rho$ and the viscosity $\nu$ of the mixture are given by
\begin{equation}
\label{Jrhonu}
\widetilde{\J}= -\frac{\rho_1-\rho_2}{2}\nabla \mu, \quad \rho(\phi)= \rho_1 \frac{1+\phi}{2}+ \rho_2 \frac{1-\phi}{2}, \quad
\nu(\phi)=\nu_1 \frac{1+\phi}{2}+ \nu_2 \frac{1-\phi}{2},
\end{equation}
where $\rho_1$, $\rho_2$ and $\nu_1$, $\nu_2$ are the positive homogeneous density and viscosity parameters of the two fluids, respectively. Moreover, $m\colon [-1,1]\to [0,\infty)$ is a mobility coefficient, which in general might depend on $\phi$.  The homogeneous free energy density $\Psi$ is the Flory-Huggins potential 
\begin{equation}  
\label{Log}
\Psi(s)=F(s)-\frac{\theta_0}{2}s^2=\frac{\theta}{2}\bigg[ (1+s)\log(1+s)+(1-s)\log(1-s)\bigg]-\frac{\theta_0}{2} s^2, \quad s \in [-1,1],
\end{equation}
where $\theta$ and $\theta_0$ are constant positive parameters.

As mentioned before, system \eqref{AGG}-\eqref{AGG-bc} is a diffuse interface model for the flow of two incompressible and viscous Newtonian fluids, in which the macroscopically immiscible fluids are considered to be (partly) miscible on a small length scale $\eps>0$. Here the parameter $\eps$ is for simplicity set to $1$. This model is thermodynamically consistent as shown in \cite{AGG2012}. The total energy associated to system \eqref{AGG} is
\begin{equation}
\label{Energy-def}
E(\uu,\phi)= E_{\text{kin}}(\uu, \phi) + E_{\text{free}}(\phi)= 
\int_{\Omega}  \frac12 \rho(\phi) |\uu|^2 \, \d x + \int_{\Omega} \frac12 |\nabla \phi|^2 + \Psi(\phi)   \, \d x,
\end{equation}
and the corresponding energy balance reads as
 \begin{equation}
\label{EE}
\ddt E(\uu, \phi) +\int_{\Omega} \nu(\phi) |D \uu|^2 \, \d x+ 
\int_{\Omega} m(\phi)|\nabla \mu|^2 \, \d x =0
\end{equation}
for sufficiently smooth solutions $(\uu, P, \phi)$. 
Let us note that in the case $\rho_1=\rho_2$, the flux term $\widetilde{\J}=\mathbf{0}$ and the model simplifies to the well-known ``Model H'' (cf.~\cite{HH77,GPV96}). An alternative and thermodynamically consistent model was derived before by Lowengrub and Truskinovski in \cite{LT98}. From the mathematical viewpoint, the latter has the disadvantage that the mass averaged (barycentric) velocity is not divergence free, which is the case in the present model based on a volume averaged velocity. {Moreover, the pressure enters the equation for the chemical potential, which leads to additional difficulties for the construction of weak solutions, cf.~\cite{LTWeakSolutions}, and a strong coupling of the linearized systems, cf.~\cite{LTStrongSolutions}, where existence of strong solutions locally in time was shown. To the best of our knowledge, there are no results on existence of strong solutions for large times (in two space dimensions), regularity for weak solutions or qualitative behavior for large times for this model.} We also refer the interested reader to \cite{B2002, DSS2007, HMR2012, SSBZ2017} for further (different) diffuse interface systems modeling two-phase flows with unmatched densities. {In the recent work \cite{UnifiedNSCH}, the reader can find a unified derivation and comparison of the known diffuse interface models from the physical point of view.}

The existence of global weak solutions to the model \eqref{AGG}-\eqref{AGG-bc} was proved in \cite[Theorem 3.4]{ADG2013} in the case of a strictly positive mobility $m$. The corresponding result in the case of a degenerate mobility in the Cahn-Hilliard equation \eqref{AGG}$_{3-4}$ was shown in \cite{ADG2013-2}. The convergence of a fully discrete numerical scheme to weak solutions was shown by Gr\"un et al. in \cite{GGM2016}. The existence of strong solutions for regular free energy densities $\Psi$ and small times was proved by Weber~\cite{WeberThesis} (cf. also {Abels} and Weber~\cite{AW2020}). The existence of weak solutions in the case of dynamic boundary conditions was proved by  Gal et al.~\cite{GalGW2019}. Results on well-posedness of this system with the free energy density \eqref{Log} in two-space dimensions, locally in time for bounded domains and globally in time in the case of periodic boundary conditions, were achieved in \cite{Gior2021}. In three space dimensions, the existence of strong solutions locally in time was obtained in \cite{G2021}. The existence of weak solutions for a variant of \eqref{AGG} with a non-local Cahn-Hilliard equation was shown by Frigeri~\cite{FrigeriNonlocalAGG} and \cite{FrigeriNonlocalDegAGG} for non-degenerate and degenerate mobility, respectively, and for non-Newtonian fluids in {Abels} and Breit~\cite{AB2018}. Finally, a model for a two-phase flow with magnetic fluids, where \eqref{AGG} is coupled to a gradient flow of the magnetization vector was studied by Kalousek et al.~\cite{KMS2021}, where the existence of weak solutions was proven.

The main goal of the present contribution is to show regularity properties of weak solutions of \eqref{AGG}-\eqref{AGG-bc} and to study the convergence for large times in the case of constant positive mobility.  Let us first recall the result on existence of weak solutions shown in \cite{ADG2013} (in the specific case $a(\cdot)\equiv 1$). We refer to the end of this introduction for the notation and, in particular, for the function spaces used in the following theorem. 
\begin{theorem}[Global existence of weak solutions]
\label{WEAK-SOL}
Let $\Omega$ be a bounded domain in $\mathbb{R}^d$, $d=2,3$, with boundary $\partial \Omega$ of class $C^2$ and $m\colon [-1,1]\to (0,\infty)$ be continuous. Assume that $\uu_0 \in \L_\sigma^2(\Omega)$, $\phi_0 \in H^1(\Omega)$ with $\| \phi_0\|_{L^\infty(\Omega)}\leq 1$ and $\left|\overline{\phi_0} \right|<1$. Then, there exists a global weak solution $(\uu,\phi)$ to \eqref{AGG}-\eqref{AGG-bc} defined on $\Omega\times [0,\infty)$ such that
\begin{equation}
\label{Weak-reg}
\begin{split}
&\uu \in BC_{\mathrm{w}} ([0,\infty); \L_\sigma^2(\Omega)) 
\cap L^2(0,\infty; \H_{0,\sigma}^1(\Omega)),\\
&\phi \in BC_{\mathrm{w}}( [0,\infty); H^1(\Omega))\cap L_{\uloc}^2([0,\infty); H^2(\Omega)), \quad \Psi'(\phi)\in L^2_{\uloc}([0,\infty); L^2(\Omega)), \\
&\phi \in L^\infty(\Omega \times (0,\infty)) \text{ with }  |\phi(x,t)| <1 \ \text{a.e. in } \Omega \times (0,\infty),\\
&\mu \in L^2_{\uloc}([0,\infty); H^1(\Omega)), \quad \nabla \mu \in L^2(0,\infty; L^2(\Omega)),
\end{split}
\end{equation}
which satisfies
\begin{equation}
\label{uu-weak}
\int_0^\infty -\left( \rho \uu, \partial_t \ww \right) + \left( \div(\rho \uu\otimes \uu), \ww \right) 
- \left( (\uu \otimes \widetilde{\J}), \nabla \ww \right)
+ \left( \nu(\phi) D \uu, D \ww \right) \, \d t
= \int_0^\infty \left( \mu \nabla \phi, \ww \right) \, \d t
\end{equation}
for all $\ww \in C_{0,\sigma}^\infty(\Omega \times (0,\infty))$, and
\begin{equation}
\label{phi-weak}
\int_0^\infty - (\phi, \partial_t v) + (\uu \cdot \nabla \phi, v) \, \d t= \int_0^\infty -(m(\phi)\nabla \mu, \nabla v) \, \d t 
\end{equation}
for all $v \in C_0^\infty((0,\infty); C^1(\overline{\Omega}))$, where
\begin{equation}
\label{mu-def}
\mu= -\Delta \phi + \Psi'(\phi) \quad \text{a.e. in } \Omega \times (0,\infty)
\end{equation}
as well as $\uu(\cdot, 0)= \uu_0(\cdot)$ and $\phi(\cdot, 0)= \phi_0$ in $\Omega$.
In addition, the energy inequality
\begin{equation}
\label{energy-ineq}
E(\uu(t), \phi(t))+ \int_s^t \left\| \sqrt{\nu(\phi(\tau))} D \uu(\tau) \right\|_{L^2(\Omega)}^2 + \left\| \sqrt{m(\phi(\tau))}\nabla \mu(\tau) \right\|_{L^2(\Omega)}^2 \, \d \tau \leq  E(\uu(s),\phi(s))
\end{equation}
holds for all $t \in [s, \infty)$ and almost all $s \in [0,\infty)$ (including $s=0$).
\end{theorem}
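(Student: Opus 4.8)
The plan is to obtain the weak solution by a multi-level approximation scheme followed by compactness, which is essentially the strategy underlying \cite{ADG2013}. First I would regularize the singular Flory–Huggins potential \eqref{Log}, replacing $\Psi$ by a family $\Psi_\delta \in C^2(\R)$ with globally bounded second derivative and quadratic growth at infinity, chosen so that $\Psi_\delta \to \Psi$ monotonically on $(-1,1)$ while $\Psi_\delta' \to \pm\infty$ outside $[-1,1]$ as $\delta \to 0^+$. With a smooth potential the pointwise constraint $|\phi| \le 1$ is dropped and the chemical potential $\mu_\delta = -\Delta\phi_\delta + \Psi_\delta'(\phi_\delta)$ is well defined for all real values of $\phi_\delta$, which makes the approximate system tractable.

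Next I would construct approximate solutions. Because the density $\rho(\phi)$ multiplies the velocity, a pure Galerkin scheme in $\uu$ is awkward, so I would use an implicit time discretization of step size $h$ coupled with a spatial Galerkin (or Leray–Schauder fixed point) argument at each time level. The momentum convection is discretized in the skew-symmetric form dictated by the mass balance $\partial_t \rho(\phi) + \div(\rho(\phi)\uu + \widetilde{\J}) = 0$, which follows from \eqref{AGG}$_3$ and $\div\,\uu=0$; this structure is exactly what renders the convective and flux contributions energy-neutral. Testing the discrete momentum equation with $\uu$ and the discrete Cahn–Hilliard equations with $\mu$ and the time increment of $\phi$ yields the discrete analogue of the dissipation identity \eqref{EE}, hence the uniform bounds: $\uu$ in $L^\infty(0,\infty;\L_\sigma^2(\Omega)) \cap L^2(0,\infty;\H_{0,\sigma}^1(\Omega))$, $\phi$ in $L^\infty(0,\infty;H^1(\Omega))$ with $\Psi_\delta(\phi)$ uniformly integrable, and $\nabla\mu$ in $L^2(0,\infty;L^2(\Omega))$.

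To control $\mu$ itself and the potential term I would test the Cahn–Hilliard equation with $\phi - \overline{\phi}$ and exploit the convexity of $F = \Psi + \tfrac{\theta_0}{2}s^2$ together with the bound $|\overline{\phi_0}| < 1$ to estimate $\Psi_\delta'(\phi)$ in $L^2_{\uloc}$ uniformly in $\delta$; elliptic regularity for $-\Delta\phi = \mu - \Psi_\delta'(\phi)$ with homogeneous Neumann data then gives $\phi \in L^2_{\uloc}([0,\infty);H^2(\Omega))$. For compactness I would bound the discrete time increments of $\phi$ and of $\rho(\phi)\uu$ in negative-order spaces, interpolate the time-discrete solutions to piecewise constant/affine functions, and invoke an Aubin–Lions–Simon argument to extract strong convergence of $\phi$ in $L^2_{\uloc}([0,\infty);H^1(\Omega))$ and of $\uu$ in $L^2_{\uloc}([0,\infty);\L_\sigma^2(\Omega))$. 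Strong convergence of $\phi$ transfers to $\rho(\phi)$ and $\nu(\phi)$ by continuity, which is precisely what is needed to pass to the limit in $\rho(\phi)\uu\otimes\uu$, $\nu(\phi)D\uu$, $\uu\otimes\widetilde{\J}$ and $\mu\nabla\phi$, recovering \eqref{uu-weak}–\eqref{mu-def}, first letting $h\to 0$ and then $\delta\to 0$. The energy inequality \eqref{energy-ineq} follows from weak lower semicontinuity of the left-hand side combined with the established convergences on the right.

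The step requiring the most care is twofold. First, recovering the strict separation $|\phi(x,t)| < 1$ a.e. in the limit $\delta \to 0$: the uniform $L^2_{\uloc}$ bound on $\Psi_\delta'(\phi)$ must be shown to force the limit $\phi$ strictly inside $(-1,1)$ almost everywhere, so that no mass concentrates at the endpoints and the convergence $\Psi_\delta'(\phi) \to \Psi'(\phi)$ can be justified. Second, the compactness of $\uu$: since $\uu$ lives only in $L^2(0,\infty;\H_{0,\sigma}^1(\Omega))$ and is multiplied by the density, the time-derivative estimate for $\rho(\phi)\uu$ sits in a weak dual space, and identifying $\lim \rho(\phi)\uu\otimes\uu$ requires combining the strong $L^2$ convergence of $\uu$ with the strong convergence of $\rho(\phi)$. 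Handling $\uu\otimes\widetilde{\J}$, which carries a factor $\nabla\mu$ converging only weakly in $L^2$, is the most delicate point and relies on the strong convergence of $\uu$ to close the argument.
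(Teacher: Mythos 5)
The paper does not prove Theorem \ref{WEAK-SOL} itself: it is quoted from \cite[Theorem 3.4]{ADG2013}, so the comparison is with the proof in that reference. Your outline --- regularization of the singular potential, implicit time discretization solved by a Leray--Schauder argument at each step, discrete energy estimates yielding the bounds in \eqref{Weak-reg}, the test with $\phi-\overline{\phi}$ to control $\mu$ and $\Psi'$, Aubin--Lions compactness with the extra care needed for $\rho(\phi)\uu$ and for the flux term $\uu\otimes\widetilde{\J}$, and recovery of $|\phi|<1$ a.e.\ from the uniform $L^2$ bound on $\Psi_\delta'(\phi_\delta)$ --- is essentially the strategy of that reference, and the delicate points you single out are the right ones.
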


\begin{remark} 
\label{rem-weak-sol}
The regularity properties \eqref{Weak-reg} and the weak formulation \eqref{phi-weak}, combined with a density argument, entail that $\partial_t \phi \in L^2(0, \infty; H^1(\Omega)')$. As a consequence, \eqref{phi-weak} is equivalent to 
$$
\l \partial_t \phi,v\r+ ( \uu\cdot \nabla \phi,v) 
+ (m(\phi) \nabla \mu, \nabla v )=0, 
\quad \forall \, v \in H^1(\Omega), \ \text{a.e. in } (0,\infty).
$$
Furthermore, arguing as in \cite{Abels2009} and \cite{GMT2019} (cf. also \cite{GiGrWu2018}), it follows from \eqref{mu-def} that 
$\phi \in L^2_{\uloc} ([0,\infty); W^{2,p}(\Omega) ) \cap L^4_{\uloc} ([0,\infty); H^2(\Omega))$ and $\Psi'(\phi) \in L^2_{\uloc} ([0,\infty); L^{p}(\Omega) )$, for any $p \in [2,\infty)$ if $d=2$ and $p=6$ if $d=3$. Finally, it holds that $\partial_\n \phi=0$ almost everywhere on $\partial \Omega \times (0,\infty)$.
\end{remark}

Throughout the manuscript we assume $m\equiv 1$ for simplicity.
Because of the energy dissipation \eqref{EE}, it is inherently expected that, as time $t$ goes to infinity, the velocity $\uu(t)$ tends to zero and $\phi(t)$ converges to an equilibrium of the Cahn-Hilliard equation/a critical point of the free energy $E_{\text{free}}$. Moreover, one predicts the solution to become regular for sufficiently large times. In the case of matched densities, i.e., $\rho_1=\rho_2$, such a result was shown in \cite{Abels2009}. However, in the case of non-matched densities $\rho_1\neq \rho_2$, this result for \eqref{AGG} was unknown so far. It is the purpose of this contribution to provide such a result. Our main result describes the global regularity features and the large time behavior of each weak solution given by Theorem \ref{WEAK-SOL} as follows.

\begin{theorem}[Regularity and asymptotic behavior of weak solutions]
\label{MAIN}
Let $\Omega$ be a bounded domain in $\mathbb{R}^d$, $d=2,3$, with boundary $\partial \Omega$ of class $C^3$ and $m\equiv 1$. Consider a global weak solution $(\uu,\phi)$ given by Theorem \ref{WEAK-SOL}. Then, the following results hold:
\begin{itemize}
\item[(i)] Global regularity of the concentration: for any $\tau >0$, we have
\begin{equation}
\label{REG-CONC}
\begin{split}
&\phi \in L^\infty( \tau,\infty; W^{2,p}(\Omega)), \quad \partial_t \phi \in L^2( \tau,\infty;H^1(\Omega)),\\
&\mu \in L^{\infty}(\tau,\infty; H^1(\Omega))\cap L_{\uloc}^2([\tau,\infty);H^3(\Omega)),
\quad F'(\phi) \in L^\infty(\tau,\infty; L^p(\Omega)),
\end{split}
\end{equation}
for any $2 \leq p <\infty$ if $d=2$ and $p=6$ if $d=3$. The equations \eqref{AGG}$_{3,4}$ are satisfied almost everywhere in $\Omega \times (0,\infty)$ and  the boundary condition $\partial_\n \mu=0$ holds almost everywhere on 
$\partial\Omega\times(0,\infty)$. 

\item[(ii)] Separation property:  there exist $T_{SP}>0$ and $\delta>0$ such that 
\begin{equation}
\label{separation}
|\phi(x,t)|\leq 1-\delta, \quad \forall \, (x,t) \in \overline{\Omega}\times [T_{SP},\infty).
\end{equation}

\item[(iii)] Large time regularity of the velocity: 
If $\Omega$ is a $C^4$ domain, then there exists $T_R>0$ (possibly larger than $T_{SP}$) such that 
\begin{equation}
\label{u-long-reg}
\uu \in L^\infty(T_R,\infty; \H^1_{0,\sigma}(\Omega)) \cap L^2(T_R,\infty; \H^2(\Omega))\cap H^1( T_R,\infty; \L^2_\sigma(\Omega)).
\end{equation}

\item[(iv)] Convergence to a stationary solution: $(\uu(t),\phi(t)) \rightarrow (\mathbf{0}, \phi_\infty)$ in $\L^2_\sigma(\Omega) \times W^{2-\eps, p}(\Omega)$ as $t \rightarrow \infty$, for any $\eps>0$, with any $2 \leq p <\infty$ if $d=2$ and $p=6$ if $d=3$, where $\phi_\infty \in W^{2,p}(\Omega)$ is a solution to the stationary Cahn-Hilliard equation
\begin{equation}
\label{Stat-CH}
\begin{split}
 -\Delta \phi_\infty + \Psi'(\phi_\infty)&=\mu_\infty \qquad \text{in } \Omega,\\
 \partial_{\n} \phi_\infty&=0 \qquad \quad \text{on } \partial\Omega,\\
   \frac{1}{|\Omega|}\int_\Omega \phi_\infty(x) \d x &= \overline{\phi_0},
    \end{split}
  \end{equation}
where $\mu_\infty \in \mathbb{R}$.
\end{itemize}
\end{theorem}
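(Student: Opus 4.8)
The proof proceeds through the four claims in order, the backbone being a global-in-time regularity theory for the convective Cahn--Hilliard equation
\[
\partial_t \phi + \uu \cdot \nabla \phi = \Delta \mu, \qquad \mu = -\Delta \phi + \Psi'(\phi),
\]
in which $\uu$ is treated as a \emph{given} solenoidal field in $L^2(0,\infty;\H^1_{0,\sigma}(\Omega))$ and the global dissipation $\int_0^\infty (\|\nabla\uu\|_{L^2}^2 + \|\nabla\mu\|_{L^2}^2)\,\d t<\infty$ from \eqref{energy-ineq} is available. For part (i) I would first obtain instantaneous parabolic smoothing by testing the equation with $\partial_t\mu$ (equivalently with $A\mu$, $A$ the Neumann Laplacian) and using the monotonicity of $F'$, which produces a differential inequality for $\|\nabla\mu\|_{L^2}^2$ in which the convective term $\uu\cdot\nabla\phi$ is absorbed by interpolating between the $H^1$-bound on $\uu$ and the emerging $H^2$/$W^{2,p}$-control of $\phi$. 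The delicate point is to render the estimate \emph{uniform} on $[\tau,\infty)$ rather than merely local in time: this I would achieve by a time-weighted uniform Gronwall argument, in which the finiteness of $\int_0^\infty(\|\nabla\uu\|^2+\|\nabla\mu\|^2)\,\d t$ serves as the integrable coefficient and prevents the higher norms of $(\phi,\mu)$ from growing as $t\to\infty$. Elliptic regularity applied to $-\Delta\phi+\Psi'(\phi)=\mu$ then delivers the $W^{2,p}$ and $L^p$ bounds for $\phi$ and $F'(\phi)$ recorded in \eqref{REG-CONC}.

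For part (ii) the key observation is that it suffices to prove separation on the $\omega$-limit set and then propagate it. Any stationary limit solves \eqref{Stat-CH} with a \emph{constant} $\mu_\infty$; multiplying $-\Delta\phi_\infty + F'(\phi_\infty) = \mu_\infty + \theta_0\phi_\infty$ by $|F'(\phi_\infty)|^{p-2}F'(\phi_\infty)$ and using $F''\ge 0$ to discard the Laplacian contribution yields $\|F'(\phi_\infty)\|_{L^p}\le \|\mu_\infty+\theta_0\phi_\infty\|_{L^p}$ uniformly in $p$. Since the right-hand side is bounded in $L^\infty$ (here the constancy of $\mu_\infty$ is crucial, and is exactly what fails for the time-dependent $\mu(t)$ in three dimensions, where only $\mu\in H^1\hookrightarrow L^6$ is known), letting $p\to\infty$ gives $\|F'(\phi_\infty)\|_{L^\infty}<\infty$, hence strict separation of every stationary state. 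By part (i) the orbit $\{\phi(t)\}_{t\ge\tau}$ is relatively compact in $C(\overline{\Omega})$ and its $\omega$-limit set is a compact set of strictly separated stationary states, so a uniform $\delta$ exists on it; the separation \eqref{separation} on $[T_{SP},\infty)$ then follows from $\dist_{C(\overline{\Omega})}(\phi(t),\omega)\to 0$.

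For part (iii) the energy dissipation combined with the Poincaré inequality gives $\int_0^\infty\|\uu(t)\|_{L^2}^2\,\d t<\infty$ and, together with the convergence of $E$, the decay $\|\uu(t)\|_{L^2}\to0$ and $\|D\uu(t)\|_{L^2}\to0$ in time. I would then select a large time $T_*$ at which the velocity is small and $\phi(T_*)$ is already a separated strong solution, invoke the local-in-time strong solvability of \eqref{AGG}--\eqref{AGG-bc}, and exploit the smallness of the data to continue the strong solution globally with the regularity \eqref{u-long-reg}. Finally, a weak--strong uniqueness argument identifies this strong solution with the given weak solution on $[T_*,\infty)$, which fixes $T_R$. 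The main obstacle here is closing the continuation estimate for the variable-density momentum equation without a Poincaré-type gain, which is precisely why the smallness of $\uu(T_*)$ is indispensable.

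For part (iv) I would use the \L{}ojasiewicz--Simon method. The $\omega$-limit set is nonempty by the compactness from (i) and (iii), and every element has the form $(\mathbf 0,\phi_\infty)$ with $\phi_\infty$ solving \eqref{Stat-CH}; the separation from (ii) makes $\Psi$ analytic along the trajectory, so $E_{\text{free}}$ satisfies a \L{}ojasiewicz--Simon inequality near $\phi_\infty$. Feeding this into the dissipation \eqref{energy-ineq}, and controlling the extra transport $\uu\cdot\nabla\phi$ by the already-established decay of $\uu$, would yield $\int_{T}^{\infty}\|\partial_t\phi(t)\|_{L^2(\Omega)}\,\d t<\infty$, so that $\phi(t)$ converges to a single $\phi_\infty$ in $W^{2-\eps,p}(\Omega)$, while $\uu(t)\to\mathbf 0$ in $\L^2_\sigma(\Omega)$ from the kinetic-energy decay. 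I expect the two principal difficulties to be the uniform-in-time closure in part (i) with a velocity of only $L^2(0,\infty;\H^1_{0,\sigma})$ regularity, and the incorporation of the non-gradient velocity coupling into the \L{}ojasiewicz--Simon estimate in part (iv).
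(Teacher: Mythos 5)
Your proposal follows essentially the same route as the paper: part (i) via a global-in-time Gronwall estimate for the convective Cahn--Hilliard equation obtained by testing with $\partial_t\mu$ and using $\|\nabla\uu\|_{L^2(\Omega)}^2\in L^1(0,\infty)$ as the integrable coefficient, part (ii) via strict separation of the stationary states in the $\omega$-limit set together with its compactness in $C(\overline{\Omega})$, part (iii) via smallness of the data at a large time combined with local strong solvability, weak--strong uniqueness and a continuation argument, and part (iv) via the Lojasiewicz--Simon inequality. The only minor overstatements --- that $\|D\uu(t)\|_{L^2(\Omega)}\to 0$ for every $t$ (the dissipation only yields this along a sequence, which is all you actually use) and that $\partial_t\phi\in L^1(T,\infty;L^2(\Omega))$ (the argument yields $L^1(T,\infty;H^1(\Omega)')$, which suffices for convergence) --- do not affect the proof.
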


The structure of this contribution is as follows: in Section~\ref{sec:CH} we show a result on improved regularity for the solutions to the Cahn-Hilliard equation \eqref{AGG}$_{3-4}$ with given velocity $\uu \in L^2(0,\infty; \H^1_{0,\sigma}(\Omega))$, which implies \eqref{REG-CONC} and is the basis for the following analysis. Next, we study the $\omega$-limit set of a weak solution to \eqref{AGG} in Section~\ref{sec:LargeTimeBehaviour}. In particular, it is shown that $\uu(t)\to_{t\to \infty} \mathbf{0}$ in $\L^2(\Omega)$ and that the $\omega$-limit set of the concentration $\phi$ consists of stationary solutions of the Cahn-Hilliard equation (cf. \eqref{Stat-CH}). Moreover, we prove the strict separation property \eqref{separation}. In order to achieve  the regularity for large times \eqref{u-long-reg}, we first prove a result on weak-strong uniqueness in Section~\ref{weak-strong-U}. Combining this with the local existence result of strong solutions from \cite{G2021}, we obtain that every weak solutions becomes a strong solution for sufficiently large times in Section~\ref{Reg-3D}. The convergence to an equilibrium of the system is shown with the aid of the Lojasiewicz-Simon inequality in the same Section~\ref{Conv-eq}. Furthermore, we prove the global regularity in two space dimensions in Section~\ref{Reg-2D}. Finally, we study the limit $\theta\to 0$ in \eqref{Log} and show that weak solutions converge (for a suitable subsequence) to weak solutions to \eqref{AGG} in the case of a double obstacle potential in Section~\ref{obstacle}.
\medskip

\textbf{Notation}. 
In the sequel, we will use the following notation
$$
\rho_\ast=\min \lbrace \rho_1,\rho_2\rbrace,
\quad \rho^\ast=\max \lbrace \rho_1,\rho_2\rbrace,
\quad \nu_\ast =\min \lbrace \nu_1,\nu_2 \rbrace,
\quad \nu^\ast =\max \lbrace \nu_1,\nu_2 \rbrace.
$$
We denote 
$a\otimes b = (a_i b_j)_{i,j=1}^d$ for $a,b\in \R^d$. 
If $X$ is a Banach space and $X'$ is its dual, then 
\begin{equation*}
  \l f,g\r \equiv \l f,g\r_{X',X} = f(g), \qquad f\in X', g\in X,
\end{equation*}
denotes the duality product.

For a measurable set $M\subseteq \R^d$ and $1\leq q \leq \infty$, $L^q(M)$ denotes the Lebesgue space and $\| \cdot \|_{L^q(M)}$ its norm. We define 
\begin{equation*}
 (f,g)= \int_\Omega f(x)g(x) \, \d x \quad \text{for all } \ f\in L^q(\Omega), \ g \in L^{q'}(\Omega) \ \text{ with } \ \frac1q+\frac1{q'}=1.
\end{equation*}
If $q=2$, $(\cdot, \cdot)$ stands for the inner product in $L^2(M)$. 
For any $f \in L^1(M)$ with $|M|<\infty$, the total mass is defined as $\overline{f}=\frac{1}{|M|} \int_{M} f(x) \, \d x$.
The space $\L^q(M):= L^q(M; \mathbb{R}^d)$ consists of all $q$-integrable/essentially bounded vector-fields. For simplicity of notation, we denote the norm in $\L^q(M)$ by $\|\cdot\|_{L^q(M)}$ and the inner product in $\L^2(M)$ by $(\cdot, \cdot)$. Let $X$ be a Banach space, $L^q(M;X)$ denotes the set of all strongly measurable
$q$-integrable functions/essentially bounded functions with values in $X$. If $M=(a,b)$, we write for simplicity $L^q(a,b;X)$ and $L^q(a,b)$.
Furthermore, $f\in L^q_{\loc}([0,\infty);X)$ if and only if $f\in L^q(0,T;X)$ for every $T>0$ and $L^q_{\uloc}([0,\infty); X)$ denotes the \emph{uniformly
  local} variant of $L^q(0,\infty;X)$ consisting of all strongly measurable $f\colon
[0,\infty)\to X$ such that
\begin{equation*}
  \|f\|_{L^q_{\uloc}([0,\infty); X)}= \sup_{t\geq 0}\|f\|_{L^q(t,t+1;X)} <\infty.
\end{equation*}
For $T<\infty$, we set $L^q_{\uloc}([0,T); X) := L^q(0,T;X)$.

Let $\Omega \subset \R^d$ be an open bounded and connected domain with Lipschitz boundary. 
The set $W^{m,q}(\Omega)$, $m\in \mathbb{R}_+$, $1\leq q\leq \infty$, denotes the $L^q$-Sobolev space, and $\W^{m,q}(\Omega)= W^m_q(\Omega; \mathbb{R}^d)$ is the corresponding space for vector-fields. In both cases, the corresponding norms are denoted by $\| \cdot\|_{W^{m,q}(\Omega)}$. The space
$W^{m,q}_{0}(\Omega)$ is the closure of $C^\infty_0(\Omega)$ in $W^{m,q}(\Omega)$,
$W^{-m,q}(\Omega)= W^{m,q'}_{0}(\Omega)'$ (where $q'$ is such that $\frac 1q +\frac 1{q'}=1$).
As usual, $H^m(\Omega)= W^{m,2}(\Omega)$ and $\H^m(\Omega)= \W^{m,2}(\Omega)$.
Next, $\L^2_\sigma(\Omega)$ and $\H^1_{0,\sigma}(\Omega)$ are the closure of $C^\infty_{0,\sigma}(\Omega; \mathbb{R}^d)=\{\boldsymbol\varphi \in C_0^\infty(\Omega; \mathbb{R}^d ): \operatorname{div} \boldsymbol\varphi =0\}$ 
in $\L^2(\Omega)$ and $\H^1(\Omega)$, respectively. 


Let $I=[0,T]$ with $0<T< \infty$ or let $I=[0,\infty)$ if $T=\infty$ and $X$ be a Banach space. The set $BC(I;X)$ is the Banach space of all bounded and continuous $f\colon I\to X$ equipped with the supremum norm, and $BUC(I;X)$ is the subspace of all bounded and uniformly continuous functions . We define $BC_{\mathrm{w}}(I;X)$ as the topological vector space of all bounded and weakly continuous functions $f\colon I\to X$. We denote by $C^\infty_0(0,T;X)$ the vector space of all smooth functions $f\colon (0,T)\to X$ with $\supp
f \overset{c}{\subset} (0,T)$. Moreover, for $1\leq p <\infty$, $W^{1,p}(0,T;X)$ is the space of all $f\in L^p(0,T;X)$ with $\partial_t f \in L^p(0,T;X)$, where $\partial_t$ denotes the vector-valued
distributional derivative of $f$. The set $W^{1,p}_{\uloc}([0,\infty);X)$ is defined in the same way, replacing $L^p(0,T;X)$ by $L^p_{\uloc}([0,\infty);X)$. Lastly, we set $H^1(0,T;X)= W^{1,2}(0,T;X)$ as well as $H^1_{\uloc}([0,\infty);X) := W^{1,2}_{\uloc}([0,\infty);X)$.

\section{On the Cahn-Hilliard equation with divergence-free drift}\label{sec:CH}
\setcounter{equation}{0}

In this section we prove new regularity results for 
the Cahn-Hilliard equation with divergence-free drift
\begin{equation}
\begin{cases}
\label{CH}
\partial_t \phi+ \uu \cdot \nabla \phi = \Delta \mu \\
\mu= -\Delta \phi+ \Psi'(\phi)
\end{cases}
\quad \text{ in } \Omega\times (0,\infty),
\end{equation}
subject to the boundary and initial conditions
\begin{equation}
\label{bcic}
\begin{cases}
\partial_\n \phi=\partial_\n \mu=0 \quad &\text{ on } \partial \Omega \times (0,\infty),\\
\phi|_{t=0}=\phi_0\quad &\text{ in } \Omega.
\end{cases}
\end{equation}
We first report the following well-posedness result proved in \cite[Theorem 6]{Abels2009}. 

\begin{theorem}
\label{well-pos}
Let $\Omega$ be a bounded domain in $\mathbb{R}^d$, $d=2,3$ with $C^3$ boundary. Assume that $\phi_0\in H^1(\Omega)\cap L^{\infty}(\Omega)$ with $\| \phi_0\|_{L^\infty(\Omega)}\leq 1$ and $\left|\overline{\phi_0}\right|<1$, and  $\uu \in L^\infty(0,\infty; \L^2_\sigma(\Omega))\cap L^2(0,\infty;\H^1_{0,\sigma}(\Omega))$.
Then, there exists a unique global weak solution $\phi$ to 
\eqref{CH}-\eqref{bcic} such that:
\medskip

\begin{enumerate}
\item The weak solution satisfies
\begin{align*}
&\phi \in  BC( [0,\infty); H^1(\Omega))\cap L_{\uloc}^4([0,\infty);H^2(\Omega))\cap L_{\uloc}^2([0,\infty);W^{2,p}(\Omega)),\\ 
&\phi \in L^{\infty}(\Omega\times (0,\infty)) \text{ such that }  |\phi(x,t)|<1  \ \text{a.e. in }\Omega\times (0,\infty),\\
&\partial_t \phi \in L^2(0,\infty; H^1(\Omega)'),\\
& \mu \in L_{\uloc}^2([0,\infty);H^1(\Omega)), \quad F'(\phi)\in L_{\uloc}^2([0,\infty);L^p(\Omega)),
\end{align*}
for any $2\leq p <\infty$ if $d=2$ and $p=6$ if $d=3$.
\medskip

\item The weak solution solves \eqref{CH} in a variational sense as follows: 
\begin{align}
\label{e2}
\l \partial_t \phi,v\r+ ( \uu\cdot \nabla \phi,v) 
+ (\nabla \mu, \nabla v )=0, 
\quad \forall \, v \in H^1(\Omega), \ \text{a.e. in } (0,\infty),
\end{align}
where $\mu$ is given by
$
\mu=-\Delta \phi +\Psi'(\phi).
$
Moreover, $\partial_\n \phi=0$ almost everywhere 
on $\partial\Omega\times(0,\infty)$, and
$\phi(\cdot,0)=\phi_0$ in $\Omega$.
\medskip

\item The weak solution satisfies the energy equality
\begin{equation}
\label{EI}
E_{\mathrm{free}}(\phi(t))+ \int_\tau^t 
\| \nabla \mu(s)\|_{L^2(\Omega)}^2 \, \d s =  E_{\mathrm{free}}(\phi(\tau)) - \int_{\tau}^t (\uu \cdot \nabla \phi, \mu) \, \d s,
\end{equation}
for every $0\leq \tau< t \leq \infty$. 
\medskip

\item Let $Q= \Omega \times (0,\infty)$. The following estimates holds
\begin{align}
\label{E1}
&\| \phi\|_{L^\infty(0,\infty;H^1(\Omega))}^2 + \| \partial_t \phi\|_{L^2(0,\infty; H^1(\Omega)')}^2 + \| \nabla \mu\|_{L^2(Q)}^2 
\leq 
C \left( 1+ E_{\mathrm{free}}(\phi_0) +  \| \uu\|_{L^2(Q)}^2 \right),
\\
&\| \phi\|_{L_{\uloc}^2([0,\infty); W^{2,p}(\Omega)) }^2 + \| F'(\phi)\|_{L_{\uloc}^2([0,\infty); L^p(\Omega))}^2 
\leq 
C_p \left( 1+ E_{\mathrm{free}}(\phi_0) +  \| \uu\|_{L^2(Q)}^2 \right),
\\
&\| \phi\|_{L_{\uloc}^4([0,\infty); H^2(\Omega))}^4 
\leq 
C \left( 1+ E_{\mathrm{free}}(\phi_0) +  \| \uu\|_{L^2(Q)}^2 \right)^2,
\end{align}
for $2\leq p <\infty$ if $d=2$ and $p=6$ if $d=3$.
The constants $C$ and $C_p$ are independent of $\theta$, $\uu$ and $\phi_0$.
\end{enumerate}
\end{theorem}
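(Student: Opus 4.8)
The plan is to construct the solution via a Faedo--Galerkin scheme in which the singular potential is first regularized, derive $\theta$-independent a priori estimates, pass to the limit, and finally establish uniqueness by a duality estimate. Concretely, I would replace $F$ by a family of smooth convex approximations $F_\eps$ with $F_\eps''\geq 0$ and $F_\eps\to F$ monotonically, set $\Psi_\eps=F_\eps-\tfrac{\theta_0}{2}(\cdot)^2$, and solve the approximate problem in the finite-dimensional space spanned by the eigenfunctions of the Neumann Laplacian (which are compatible with $\partial_\n\phi=\partial_\n\mu=0$). The first estimate is the energy identity: testing $\partial_t\phi+\uu\cdot\nabla\phi=\Delta\mu$ with $\mu$ and using $\mu=-\Delta\phi+\Psi_\eps'(\phi)$ yields
\[
\ddt E_{\mathrm{free}}(\phi) + \|\nabla\mu\|_{L^2(\Omega)}^2 = -(\uu\cdot\nabla\phi,\mu).
\]
Here the divergence-free structure is essential: since $\div\uu=0$ and $\uu=\mathbf 0$ on $\partial\Omega$, one rewrites $-(\uu\cdot\nabla\phi,\mu)=(\phi\uu,\nabla\mu)$ and, using $|\phi|\le 1$, absorbs it as $\tfrac12\|\nabla\mu\|_{L^2(\Omega)}^2+\tfrac12\|\uu\|_{L^2(\Omega)}^2$. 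Integrating gives control of $E_{\mathrm{free}}(\phi)$ in $L^\infty$ and of $\nabla\mu$ in $L^2(Q)$ with constants depending only on $E_{\mathrm{free}}(\phi_0)$ and $\|\uu\|_{L^2(Q)}^2$. Integrating the weak form over $\Omega$ (again using $\int_\Omega\uu\cdot\nabla\phi=\int_\Omega\div(\phi\uu)=0$ and $\int_\Omega\Delta\mu=0$) yields conservation of mass $\overline{\phi(t)}=\overline{\phi_0}$, which together with $|\overline{\phi_0}|<1$ is the crucial input for the potential estimates.

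The heart of the proof, and the main obstacle, is the chain of estimates for the singular potential, which must be $\theta$-independent. First I would bound $F'(\phi)$ in $L^1$: testing $-\Delta\phi+F'(\phi)=\mu+\theta_0\phi$ with $\phi-\overline{\phi_0}$ controls $\int_\Omega F'(\phi)(\phi-\overline{\phi_0})$ by $C(1+\|\nabla\mu\|_{L^2(\Omega)})$, and then the standard lemma for logarithmic potentials (valid precisely because $|\overline{\phi_0}|<1$, with constant depending on $\overline{\phi_0}$ and $\Omega$ but not $\theta$) converts this into $\|F'(\phi)\|_{L^1(\Omega)}\le C(1+\|\nabla\mu\|_{L^2(\Omega)})$; since $\overline\mu=\overline{\Psi'(\phi)}$, this also bounds $\mu$ in $H^1$. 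The $L^p$ bound then follows by testing $-\Delta\phi=\mu+\theta_0\phi-F'(\phi)$ with $|F'(\phi)|^{p-2}F'(\phi)$: the Laplacian term has a favorable sign by convexity of $F$, giving $\|F'(\phi)\|_{L^p(\Omega)}\le \|\mu\|_{L^p(\Omega)}+\theta_0\le C(1+\|\mu\|_{H^1(\Omega)})$ for $p\le 6$ if $d=3$ or all finite $p$ if $d=2$ via Sobolev embedding. Elliptic regularity for the Neumann problem upgrades this to the $W^{2,p}$ bound, and testing $-\Delta\phi=\mu-\Psi'(\phi)$ with $-\Delta\phi$ (using $F''\ge 0$) gives $\|\Delta\phi\|_{L^2(\Omega)}^2\le C(1+\|\nabla\mu\|_{L^2(\Omega)})$, whence the $L^4_{\uloc}(H^2)$ estimate after squaring and integrating. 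The separation $|\phi|<1$ a.e.\ is then automatic, since integrability of $F'(\phi)$ excludes $\phi=\pm1$ on a set of positive measure. Finally, $\partial_t\phi\in L^2(0,\infty;H^1(\Omega)')$ follows from the weak form by bounding $(\phi\uu,\nabla v)$ and $(\nabla\mu,\nabla v)$ by $\|v\|_{H^1(\Omega)}$.

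With these uniform bounds I would pass to the limit (in $\eps$ and in the Galerkin index) by extracting weak-$\ast$ limits and invoking Aubin--Lions---using $\phi\in L^2(H^2)$ and $\partial_t\phi\in L^2(H^1')$---to obtain strong convergence of $\phi$ in $L^2(H^1)$ and a.e.\ convergence; the singular term is identified by monotonicity and Fatou arguments, which also guarantee $|\phi|\le 1$ and that $F'(\phi)$ is the correct weak limit. The drift term passes to the limit since $\uu$ is fixed and $\nabla\phi$ converges weakly in $L^2$. Continuity $\phi\in BC([0,\infty);H^1(\Omega))$ is obtained by combining weak continuity (from $\phi\in L^\infty(H^1)$, $\partial_t\phi\in L^2(H^1')$) with the continuity of $t\mapsto E_{\mathrm{free}}(\phi(t))$ furnished by the energy equality \eqref{EI}, which upgrades weak to strong continuity.

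For uniqueness, given two solutions with the same data I would set $\phi=\phi_1-\phi_2$, note $\overline{\phi(t)}\equiv0$, and test the difference equation with $(-\Delta_{\mathrm N})^{-1}\phi$, the inverse Neumann Laplacian on mean-zero functions. The contribution of $F'(\phi_1)-F'(\phi_2)$ has a good sign by monotonicity, the concave part $-\theta_0\phi$ produces a term controllable by the $(-\Delta_{\mathrm N})^{-1}$-norm, and a Gronwall argument forces $\phi\equiv0$. The delicate term is again the drift $(\uu\cdot\nabla\phi,(-\Delta_{\mathrm N})^{-1}\phi)$: here the low regularity of $\uu$ (only $L^2(\H^1_{0,\sigma}(\Omega))$, with no time continuity) is the principal difficulty, and I expect to handle it through the divergence-free structure, the uniform bound $|\phi_i|\le1$, and interpolation inequalities, absorbing it into the dissipation up to a Gronwall-integrable factor $\|\uu\|_{H^1(\Omega)}^2$. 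This drift control, rather than the potential estimates, is what makes the low-regularity hypothesis on $\uu$ nontrivial.
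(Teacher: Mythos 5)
Your overall strategy is sound, but you should know that the paper does not prove Theorem \ref{well-pos} at all: it is quoted verbatim from \cite[Theorem 6]{Abels2009}, with the $L^4_{\uloc}([0,\infty);H^2(\Omega))$ regularity taken from \cite{GiGrWu2018} (see Remark \ref{remark-wellposs}). Your blind reconstruction follows essentially the same route as those references and as the paper's own proof of the stronger Theorem \ref{CH-strong}: regularize the singular potential, test with $\mu$ for the energy identity, with $\phi-\overline{\phi_0}$ for the $L^1$ bound on $F'$ via the Miranville--Zelik inequality \eqref{F'-L1}, with $|F'(\phi)|^{p-2}F'(\phi)$ for the $L^p$ bound, and with $-\Delta\phi$ for the $L^4_{\uloc}(H^2)$ bound; uniqueness by the $H^{-1}$ duality pairing with $(-\Delta_{\mathrm N})^{-1}(\phi_1-\phi_2)$, rewriting the drift as $-(\phi\uu,\nabla(-\Delta_{\mathrm N})^{-1}\phi)$ and absorbing it with a Gronwall factor in $L^1(0,T)$. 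All of the individual estimates you state are correct, including the crucial $\|\Delta\phi\|_{L^2(\Omega)}^2\leq C(1+\|\nabla\mu\|_{L^2(\Omega)})$ with the first power on the right (which is what makes $\|\phi\|_{H^2}^4$ integrable against $\|\nabla\mu\|_{L^2}^2\in L^1$).

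One point needs repair in the order of operations. You invoke $|\phi|\leq 1$ to absorb the transport term as $\tfrac12\|\nabla\mu\|^2+\tfrac12\|\uu\|^2$ and to get the clean, $\uu$-linear constants of \eqref{E1}, but at the Galerkin level with the smooth approximations $F_\eps$ the constraint $|\phi|\leq 1$ is not yet available. There you must instead estimate $|(\phi\uu,\nabla\mu)|\leq C\|\phi\|_{H^1}\|\uu\|_{\H^1}\|\nabla\mu\|_{L^2}$ and close a Gronwall inequality using $\uu\in L^2(0,T;\H^1_{0,\sigma}(\Omega))$; this yields cruder ($T$-dependent, exponential) bounds that suffice for compactness and for identifying $F'(\phi)$ by monotonicity. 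Only after the limit, once $|\phi|<1$ a.e.\ is recovered from the integrability of $F'(\phi)$, can the energy identity \eqref{EI} be re-tested for the limit solution to produce the stated global-in-time estimates with constants depending only on $E_{\mathrm{free}}(\phi_0)$ and $\|\uu\|_{L^2(Q)}^2$. A second, smaller point: the claimed $\theta$-independence of the constants requires the scaling observation the paper makes after \eqref{F'-L1} (apply the inequality with $\theta=1$ and multiply by $\theta$); the constant in the Miranville--Zelik inequality is not a priori $\theta$-free. Neither issue changes the architecture of your argument.
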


\begin{remark}
\label{remark-wellposs} 
The following comments  concerning Theorem \ref{well-pos} are in order: 
\begin{itemize}
\item[(i)] A closer look at the proof of \cite[Theorem 6]{Abels2009} reveals that $\uu \in L^2(0,\infty; \H^1_{0,\sigma}(\Omega))$ is sufficient to show the desired claim in Theorem \ref{well-pos}.

\item[(ii)] The regularity $\phi \in L_{\uloc}^4(0,\infty;H^2(\Omega ))$ is not shown in \cite{Abels2009}, but the proof can be found in \cite{GiGrWu2018}.
\end{itemize}
\end{remark}

The propagation of regularity of the weak solutions and the existence of strong solutions have been first shown in \cite[Lemma 3]{Abels2009}. We report it here below for clarity of presentation.
\begin{theorem}
\label{A-reg}
Let the assumptions of Theorem \ref{well-pos} hold. Assume that $\kappa \equiv 1$ if $\phi_0 \in H^2(\Omega)$, $\mu_0=-\Delta \phi_0+\Psi'(\phi_0) \in H^1(\Omega)$ and 
$\partial_\n \phi_0=0$ on $\partial \Omega$, whereas $\kappa(t)= \left( \frac{t}{1+t}\right)^\frac12$ otherwise. 
\begin{itemize}
\item[1.] If $\partial_t \uu \in L^1_{\uloc}([0,\infty); \L^2(\Omega))$, then the weak solution to \eqref{CH}-\eqref{bcic} satisfies
\begin{align*}
& \kappa \partial_t \phi \in L^\infty(0,\infty; H^1(\Omega)')\cap L^2_{\uloc}([0,\infty); H^1(\Omega)),\\
& \kappa \phi \in L^\infty(0,\infty; W^{2,p}(\Omega)), \quad \kappa F'(\phi) \in L^\infty(0,\infty; L^p(\Omega)),\\
& \kappa \mu \in L^\infty(0,T; H^1(\Omega)),
\end{align*}
for any $2 \leq p <\infty$ if $d=2$ and $p=6$ if $d=3$.
\medskip

\item[2.] If $\uu \in B^\alpha_{\frac43 \, \infty, \uloc}([0,\infty); \H^s(\Omega) )\cap BC_{\mathrm{w}}([0,\infty); \L_\sigma^2(\Omega))$ for some $-\frac12 < s \leq 0$ and $\alpha \in (0,1)$, then the weak solution to \eqref{CH}-\eqref{bcic} fulfills
$$
\kappa \phi \in C^\alpha([0,\infty); H^1(\Omega)')\cap B^\alpha_{2 \, \infty, \uloc}([0,\infty);H^1(\Omega)).
$$ 
\end{itemize}
\end{theorem}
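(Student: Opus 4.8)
The plan is to upgrade the regularity of the weak solution from Theorem~\ref{well-pos} by performing (weighted) higher-order energy estimates on the time-differentiated equation, and then bootstrapping through the elliptic problem $-\Delta\phi+\Psi'(\phi)=\mu$. Throughout I write $w=\partial_t\phi$ and let $\mathcal{N}=(-\Delta_{\mathrm N})^{-1}$ denote the inverse Neumann Laplacian acting on zero-mean functions, so that $\|w\|_\ast^2:=\langle w,\mathcal{N}w\rangle=\|\nabla\mathcal{N}w\|_{L^2(\Omega)}^2$ is equivalent to the $H^1(\Omega)'$-norm; note $\overline{w}=\tfrac{\d}{\d t}\overline\phi=0$ by mass conservation, so $\mathcal{N}w$ is well defined.

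For part~1, I differentiate \eqref{CH} in time. Using $\uu\cdot\nabla\phi=\div(\phi\uu)$ and $\div\uu=0$, the function $w$ formally solves $\partial_t w-\Delta\mu_t=-\partial_t\uu\cdot\nabla\phi-\uu\cdot\nabla w$ with $\mu_t=-\Delta w+\Psi''(\phi)w$ and $\partial_\n\mu_t=0$. Testing with $\mathcal{N}w$ yields
\[
\tfrac12\tfrac{\d}{\d t}\|w\|_\ast^2+\|\nabla w\|_{L^2}^2+\int_\Omega\Psi''(\phi)\,w^2\,\d x=-\langle\partial_t\uu\cdot\nabla\phi,\mathcal{N}w\rangle-\langle\uu\cdot\nabla w,\mathcal{N}w\rangle .
\]
The decisive points are the two forcing terms. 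In the transport term I integrate by parts and use $\|\phi\|_{L^\infty}\le1$ to obtain $|\langle\partial_t\uu\cdot\nabla\phi,\mathcal{N}w\rangle|=|\langle\phi\,\partial_t\uu,\nabla\mathcal{N}w\rangle|\le\|\partial_t\uu\|_{L^2}\|w\|_\ast$, which is \emph{linear} in $w$ and explains why the hypothesis $\partial_t\uu\in L^1_{\uloc}([0,\infty);\L^2(\Omega))$ (rather than $L^2$) suffices. For the convection term, integration by parts ($\div\uu=0$) and Hölder give $|\langle\uu\cdot\nabla w,\mathcal{N}w\rangle|\le\|\uu\|_{L^3}\|w\|_\ast\|w\|_{H^1}$, where $\|\uu\|_{L^3}^2\le\|\uu\|_{L^2}\|\uu\|_{L^6}\in L^1_{\uloc}$ since $\uu\in L^\infty(0,\infty;\L^2(\Omega))\cap L^2(0,\infty;\L^6(\Omega))$. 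Exploiting the monotone lower bound $\Psi''=F''-\theta_0$ with $\int F''(\phi)w^2\ge\theta\|w\|_{L^2}^2$, Poincaré–Wirtinger for zero-mean $w$, and the interpolation $\|w\|_{L^2}^2\le\|w\|_\ast\|w\|_{H^1}$ to absorb the $\theta_0$-contribution, I arrive at $\tfrac{\d}{\d t}\|w\|_\ast^2+c\|w\|_{H^1}^2\le C\|\partial_t\uu\|_{L^2}\|w\|_\ast+C(\|\uu\|_{L^3}^2+1)\|w\|_\ast^2$. Dividing by $\|w\|_\ast$ and applying Gronwall with $L^1_{\uloc}$ data gives $w\in L^\infty(H^1(\Omega)')\cap L^2_{\uloc}(H^1)$; in the non-compatible case I run the same computation for $\kappa w$, using $(\kappa^2)'=(1+t)^{-2}\in L^\infty$ and $w\in L^2_{\loc}(H^1(\Omega)')$ to handle the initial layer, which produces the weight. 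Finally, from $\Delta\mu=w+\div(\phi\uu)\in L^\infty(H^1(\Omega)')$ (weighted) I obtain $\mu\in L^\infty(H^1)$ by elliptic regularity with controlled mean $\overline\mu=\overline{\Psi'(\phi)}$, and feeding $\mu$ into $-\Delta\phi+\Psi'(\phi)=\mu$ and invoking the standard elliptic estimate for the singular Cahn–Hilliard nonlinearity (as in \eqref{E1}) gives $\kappa\phi\in L^\infty(W^{2,p})$ and $\kappa F'(\phi)\in L^\infty(L^p)$.

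For part~2, I replace time differentiation by time translations: with $\psi_h=\phi(\cdot+h)-\phi(\cdot)$ and the splitting $\tau_h(\uu\cdot\nabla\phi)=(\tau_h\uu)\cdot\nabla\phi(\cdot+h)+\uu\cdot\nabla\psi_h$, the same $\mathcal{N}\psi_h$-test produces an estimate whose only genuinely new ingredient is the forcing $(\tau_h\uu)\cdot\nabla\phi(\cdot+h)$. After integrating by parts this equals $\langle\phi(\cdot+h)\,\tau_h\uu,\nabla\mathcal{N}\psi_h\rangle$, which I bound by pairing $\tau_h\uu\in\H^s$ against $\nabla\mathcal{N}\psi_h\in\H^{-s}=\H^{|s|}$ with $|s|<\tfrac12$. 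This is precisely where the threshold $s>-\tfrac12$ enters: for $|s|<\tfrac12$ the fractional space $\H^{|s|}$ carries no boundary condition, so the regularity $\phi\in L^\infty(W^{2,p})$ from part~1 makes $\phi(\cdot+h)$ an admissible multiplier, and $\nabla\mathcal{N}\psi_h\in\H^{|s|}$ follows by interpolating $\psi_h\in H^1\cap H^{-1}$. Taking $\sup_h h^{-\alpha}(\cdot)$ and balancing the time-integrabilities by Hölder (the exponent $\tfrac43$ being the budget matching $\|\uu\|_{B^\alpha_{\frac43\,\infty,\uloc}(\H^s)}$ against the $L^2_tH^1$ and $L^\infty_tH^1$ norms of $\phi$) gives $\kappa\phi\in B^\alpha_{2\,\infty,\uloc}([0,\infty);H^1)$; the companion estimate $\kappa\phi\in C^\alpha([0,\infty);H^1(\Omega)')$ follows by measuring the same difference quotient in the weaker $H^1(\Omega)'$-norm and using the parabolic gain.

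I expect the main obstacle to be rigor in the presence of the singular logarithmic potential: the formal quantities $\int_\Omega\Psi''(\phi)|\partial_t\phi|^2\,\d x$ and its difference-quotient analogue may be infinite a priori, since $\Psi''$ blows up as $\phi\to\pm1$ and strict separation is not yet available at this stage. The honest argument must therefore be carried out on a Yosida/Galerkin approximation of $F$ and pass to the limit, keeping all constants independent of the approximation by using only the sign-definite structure $F''\ge\theta>0$. The velocity-dependent terms must, in addition, be arranged to remain linear in the highest norm of $\phi$ (as done above via $\|\phi\|_{L^\infty}\le1$), so that the mere $L^1_{\uloc}$-integrability of $\partial_t\uu$, respectively the $B^\alpha_{\frac43\,\infty,\uloc}$-regularity of $\uu$, is enough to close the Gronwall argument.
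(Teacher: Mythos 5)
Your proposal cannot be checked against an in-paper argument: the paper states Theorem \ref{A-reg} as a quotation of \cite[Lemma 3]{Abels2009} and gives no proof of it, so the only internal comparison point is the proof of Theorem \ref{CH-strong}, which works one derivative higher (testing with $\partial_t\mu$ after a viscous regularization) rather than at the $H^1(\Omega)'$ level you use. Your strategy --- testing the time-differentiated (or time-translated) equation against $\mathcal{N}\partial_t\phi$, keeping the velocity terms linear in the top norm via $\|\phi\|_{L^\infty}\leq 1$, and using the interpolation $\|w\|_{L^2}^2\leq\|w\|_*\|w\|_{H^1}$ --- is the standard route and is essentially the argument of the cited lemma; your closing remark that the rigorous version must be run on difference quotients or a Yosida approximation (exploiting only monotonicity of $F'$, never $F''$ pointwise) is exactly the right fix for the singular potential.

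Two points deserve attention. First, in part 2 you invoke the conclusion $\phi\in L^\infty(0,\infty;W^{2,p}(\Omega))$ of part 1 to justify that $\phi(\cdot+h)$ is an admissible multiplier on $\H^{|s|}$, but part 2 does not assume $\partial_t\uu\in L^1_{\uloc}([0,\infty);\L^2(\Omega))$, so that conclusion is not available there; you must instead get the multiplier bound from Theorem \ref{well-pos} alone (e.g.\ $\phi\in L^\infty(0,\infty;H^1(\Omega))\cap L^\infty(\Omega\times(0,\infty))$ together with $\phi\in L^2_{\uloc}([0,\infty);W^{2,p}(\Omega))$ suffices for $|s|<\tfrac12$). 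Second, your Gronwall coefficient contains an additive constant (from absorbing the $\theta_0\|w\|_{L^2}^2$ term) and $\|\uu\|_{L^3}^2$ is only $L^1_{\uloc}$, not $L^1(0,\infty)$, so a naive Gronwall over $[0,\infty)$ gives exponential growth; to obtain the stated $L^\infty(0,\infty;H^1(\Omega)')$ bound you need to localize to unit intervals and restart from a good time $t_0$ chosen via the global bound $\partial_t\phi\in L^2(0,\infty;H^1(\Omega)')$ from \eqref{E1}. Both are routine repairs, not gaps in the method.
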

\noindent
Theorem \ref{A-reg} provides two regularity results for the weak solutions to the Cahn-Hilliard equation with divergence-free drift by requiring that either $\partial_t \uu \in L_{\uloc}^1([0,\infty); \L^2(\Omega))$ or $\uu \in B^\alpha_{\frac43 \infty, \uloc}([0,\infty);\H^s(\Omega))$ for some $-\frac12 < s \leq 0$ and $\alpha \in (0,1)$, in addition to $\uu \in L^\infty(0,\infty; \L^2_\sigma(\Omega))\cap L^2(0,\infty;\H^1_{0,\sigma}(\Omega))$. 
{Although the assumption in the second part of Theorem \ref{A-reg} can be proven for weak solutions of \eqref{AGG} in the case of matched densities (namely for the Model H) as in \cite{Abels2009}, it does not seem possible for the case with unmatched densities since the weak formulation only gives some control of $\partial_t (\rho \uu)|_{\H^1_{0,\sigma}(\Omega)}$. To overcome this issue,} we will now show that the condition $\uu \in L^2(0,\infty;\H^1_{0,\sigma}(\Omega))$ is sufficient to gain full regularity for the solutions to \eqref{CH}-\eqref{bcic}. {We expect that such result will be useful for other diffuse interface models with hydrodynamics.}

\begin{theorem}
\label{CH-strong}
Let $\Omega$ be a bounded domain in $\mathbb{R}^d$, $d=2,3$, with $C^3$ boundary and the initial condition $\phi_0 \in H^2(\Omega)$ be such that 
$\| \phi_0\|_{L^\infty(\Omega)}\leq 1$, $\left|\overline{\phi_0}\right|<1$, $\mu_0=-\Delta \phi_0+\Psi'(\phi_0) \in H^1(\Omega)$ and 
$\partial_\n \phi_0=0$ on $\partial \Omega$. Assume that $\uu \in L^2(0,\infty;\H^1_{0,\sigma}(\Omega))$.
Then, there exists a unique global (strong) solution to \eqref{CH}-\eqref{bcic} such that 
\begin{equation}
\label{REG}
\begin{split}
&\phi \in L^\infty(0,\infty;W^{2,p}(\Omega)), \quad \partial_t \phi \in L^2(0,\infty;H^1(\Omega)),\\
&\phi \in L^{\infty}(\Omega\times (0,\infty)) \text{ with } |\phi(x,t)|<1 
\ \text{a.e. in } \Omega\times (0,\infty),\\
&\mu \in L^{\infty}(0,\infty; H^1(\Omega))\cap L_{\uloc}^2([0,\infty);H^3(\Omega)),
\quad F'(\phi) \in L^\infty(0,\infty; L^p(\Omega)),
\end{split}
\end{equation}
for any $2\leq p <\infty$ if $d=2$ and $p=6$ if $d=3$. The strong solution satisfies \eqref{CH} almost everywhere in $\Omega \times (0,\infty)$ and  $\partial_\n \phi=\partial_\n \mu=0$ almost everywhere on 
$\partial\Omega\times(0,\infty)$. Moreover, there exists a positive constant $C$ depending only on $\Omega$, $\theta$, $\theta_0$,
 and $\overline{\phi_0}$ such that
\begin{equation}
\label{N-mu}
\begin{split}
\| \nabla \mu \|_{L^\infty(0,\infty;L^2(\Omega))} 
& \leq \left(  4 \left\| \nabla \left( -\Delta \phi_{0} +\Psi' \left(\phi_{0} \right) \right)\right\|_{L^2(\Omega)}^2+  4C \int_0^\infty \| \nabla \uu(s)\|_{L^2(\Omega)}^2 \, \d s\right)^\frac12 \\
& \quad \times
\mathrm{exp}\left( C \int_0^\infty \| \nabla \uu(s)\|_{L^2(\Omega)}^2 \, \d s \right)
\end{split}
\end{equation}
and
 \begin{equation}
\label{N-phit}
\begin{split}
\int_0^\infty \| \nabla \partial_t \phi(s)\|_{L^2(\Omega)}^2 \, \d s
&\leq 
6 \left(  \left\| \nabla \left(-\Delta \phi_{0} +\Psi'(\phi_{0}) \right) \right\|_{L^2(\Omega)}^2+ C \int_0^\infty \| \nabla \uu(s)\|_{L^2(\Omega)}^2 \, \d s\right) \\
&\quad \times \left( 1+\left( \int_0^\infty \| \nabla \uu(s)\|_{L^2(\Omega)}^2 \, \d s \right) \ \mathrm{exp}\left( 2C \int_0^\infty \| \nabla \uu(s)\|_{L^2(\Omega)}^2 \, \d s \right) \right). 
\end{split}
\end{equation}
In particular, the constant $C$ is bounded whenever $\theta$ is restricted to a bounded interval.

In addition, if $\uu \in  L^\infty(0, \infty;\L^2_\sigma)\cap L^2(0,\infty;\H^1_{0,\sigma}(\Omega))$, then $\partial_t \phi \in L^\infty(0,\infty; H^1(\Omega)')$.
\end{theorem}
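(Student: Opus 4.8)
The plan is to establish the quantitative bounds by a priori estimates on sufficiently regular solutions and then recover the general case by approximation. Uniqueness will come for free: Theorem~\ref{well-pos} together with Remark~\ref{remark-wellposs}(i) already provides a \emph{unique} weak solution $\phi$ under the sole hypothesis $\uu\in L^2(0,\infty;\H^1_{0,\sigma}(\Omega))$, and any strong solution is in particular a weak one, hence coincides with it. The real content is therefore the improved regularity \eqref{REG} and the explicit estimates \eqref{N-mu}--\eqref{N-phit}. Since the hypotheses of Theorem~\ref{A-reg} (control of $\partial_t\uu$) are not available, I would first regularize the drift: taking a time mollification $\uu_k$ of $\uu$ (extended by $\mathbf 0$ to negative times) gives $\uu_k\in L^\infty(0,\infty;\H^1_{0,\sigma}(\Omega))$ with $\partial_t\uu_k\in L^1_{\uloc}([0,\infty);\L^2(\Omega))$, $\uu_k\to\uu$ in $L^2(0,\infty;\H^1_{0,\sigma}(\Omega))$, and, by Young's convolution inequality, $\int_0^\infty\|\nabla\uu_k\|_{L^2(\Omega)}^2\,\d s\le\int_0^\infty\|\nabla\uu\|_{L^2(\Omega)}^2\,\d s$ uniformly in $k$. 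For such $\uu_k$, Theorem~\ref{A-reg}(1) (with $\kappa\equiv1$, as $\phi_0\in H^2(\Omega)$, $\mu_0\in H^1(\Omega)$ and $\partial_\n\phi_0=0$) yields genuine strong solutions $\phi_k$ with $\partial_t\phi_k\in L^2_{\uloc}([0,\infty);H^1(\Omega))$ and $\mu_k\in L^\infty_{\loc}([0,\infty);H^1(\Omega))$, which justifies the computations below.

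The core is a differential inequality for $\|\nabla\mu_k\|_{L^2(\Omega)}^2$. I would differentiate $\tfrac12\tfrac{\d}{\d t}\|\nabla\mu_k\|^2$ and use $\Delta\mu_k=\partial_t\phi_k+\uu_k\cdot\nabla\phi_k$, $\partial_t\mu_k=-\Delta\partial_t\phi_k+\Psi''(\phi_k)\partial_t\phi_k$, and the homogeneous Neumann conditions to obtain
\[
\tfrac12\tfrac{\d}{\d t}\|\nabla\mu_k\|_{L^2(\Omega)}^2+\|\nabla\partial_t\phi_k\|_{L^2(\Omega)}^2+\big(\Psi''(\phi_k)\partial_t\phi_k,\partial_t\phi_k\big)=-\big(\uu_k\cdot\nabla\phi_k,\partial_t\mu_k\big).
\]
Two structural difficulties arise. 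First, the singular-potential term: writing $\Psi''=F''-\theta_0$ with $F''\ge\theta>0$, I keep the nonnegative weighted quantity $\|\sqrt{F''(\phi_k)}\partial_t\phi_k\|_{L^2(\Omega)}^2$ as a good term and control the remaining $\theta_0\|\partial_t\phi_k\|^2$. Second, the convective term, which after integration by parts splits into $-(\nabla(\uu_k\cdot\nabla\phi_k),\nabla\partial_t\phi_k)$ and $-(\uu_k\cdot\nabla\phi_k,\Psi''(\phi_k)\partial_t\phi_k)$. To bound these I would invoke the elliptic regularity for the singular potential, $\|\phi_k\|_{W^{2,p}(\Omega)}+\|\mu_k\|_{H^1(\Omega)}\le C(1+\|\nabla\mu_k\|_{L^2(\Omega)})$ with $C=C(\Omega,\theta,\theta_0,\overline{\phi_0})$ (the mean $\overline{\mu_k}$ being controlled through $|\overline{\phi_0}|<1$), which furnishes $\|\nabla\phi_k\|_{L^\infty}$ and $\|\nabla^2\phi_k\|_{L^3}$ in terms of $1+\|\nabla\mu_k\|$, and combine them with $\H^1_{0,\sigma}(\Omega)\hookrightarrow\L^6(\Omega)$ and Young's inequality; a fraction of $\|\nabla\partial_t\phi_k\|^2$ is absorbed on the left. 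The aim is the closed inequality
\[
\tfrac{\d}{\d t}\|\nabla\mu_k\|_{L^2(\Omega)}^2+c_0\|\nabla\partial_t\phi_k\|_{L^2(\Omega)}^2\le C\|\nabla\uu_k\|_{L^2(\Omega)}^2\big(1+\|\nabla\mu_k\|_{L^2(\Omega)}^2\big).
\]

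With this in hand, Gronwall's lemma applied to $y_k=\|\nabla\mu_k\|^2$ with integrating factor $\exp\!\big(C\int_0^t\|\nabla\uu_k\|^2\big)$ yields \eqref{N-mu} uniformly in $k$ (using the uniform bound on $\int_0^\infty\|\nabla\uu_k\|^2$), and integrating in time yields \eqref{N-phit}. These uniform estimates, together with the elliptic bound, give uniform control of $\phi_k$ in $L^\infty(0,\infty;W^{2,p})$, of $\mu_k$ in $L^\infty(0,\infty;H^1)$ and of $\partial_t\phi_k$ in $L^2(0,\infty;H^1)$; an Aubin--Lions argument then provides strong convergence of $\phi_k$ sufficient to pass to the limit in the convective term, so the limit is a weak solution for the drift $\uu$ and hence equals $\phi$. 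Weak lower semicontinuity transfers \eqref{REG}, \eqref{N-mu} and \eqref{N-phit} to $\phi$, while $\mu\in L^2_{\uloc}([0,\infty);H^3(\Omega))$ follows from $-\Delta\mu=-\partial_t\phi-\uu\cdot\nabla\phi\in L^2_{\uloc}H^1$ and elliptic regularity. Finally, if in addition $\uu\in L^\infty(0,\infty;\L^2_\sigma(\Omega))$, the bound $\partial_t\phi\in L^\infty(0,\infty;H^1(\Omega)')$ is immediate from $\partial_t\phi=\Delta\mu-\uu\cdot\nabla\phi$ by duality, using $\|\Delta\mu\|_{H^1(\Omega)'}\le\|\nabla\mu\|_{L^2(\Omega)}$ (from \eqref{N-mu}) and $\|\uu\cdot\nabla\phi\|_{H^1(\Omega)'}=\|\Div(\phi\uu)\|_{H^1(\Omega)'}\le\|\phi\|_{L^\infty}\|\uu\|_{L^2(\Omega)}$ with $|\phi|\le1$.

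I expect the main obstacle to be the interaction of the convection with the singular potential, specifically the careful estimation of the terms containing $\sqrt{F''(\phi_k)}\nabla\phi_k$ (where I would exploit the identity $F''(\phi_k)\nabla\phi_k=\nabla F'(\phi_k)$ and the lower bound $F''\ge\theta$ to trade the weight against the good term $\|\sqrt{F''(\phi_k)}\partial_t\phi_k\|^2$), together with the delicate bookkeeping ensuring that every constant depends only on $\Omega,\theta,\theta_0,\overline{\phi_0}$ and not on the solution — which is precisely what makes \eqref{N-mu}--\eqref{N-phit} explicit.
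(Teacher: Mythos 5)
Your skeleton — test the equation with $\partial_t\mu$, close the resulting differential inequality for $\|\nabla\mu\|_{L^2(\Omega)}^2$ via the elliptic bound $\|\phi\|_{W^{2,p}(\Omega)}+\|\mu\|_{H^1(\Omega)}\le C(1+\|\nabla\mu\|_{L^2(\Omega)})$, apply Gronwall, then remove the regularization by compactness and appeal to the uniqueness of Theorem~\ref{well-pos} — is exactly the skeleton of the paper's proof, and your target inequality is the paper's \eqref{DI-alpha} at $\alpha=0$. The gap is in the justification of that computation. You regularize only the drift and invoke Theorem~\ref{A-reg}(1) with $\kappa\equiv1$, asserting that this ``justifies the computations below.'' It does not: Theorem~\ref{A-reg}(1) gives $\partial_t\phi_k\in L^2_{\uloc}([0,\infty);H^1(\Omega))$ and $\mu_k\in L^\infty(0,T;H^1(\Omega))$, but no control of $\partial_t\mu_k$ or $\partial_t^2\phi_k$, so neither the pairing $(\uu_k\cdot\nabla\phi_k,\partial_t\mu_k)$ nor the identity $(\partial_t\phi_k,\partial_t\mu_k)=\|\nabla\partial_t\phi_k\|_{L^2(\Omega)}^2+\int_\Omega\Psi''(\phi_k)|\partial_t\phi_k|^2\,\d x$ is defined at that level of regularity. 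More seriously, since only $\|\phi_0\|_{L^\infty(\Omega)}\le 1$ is assumed, nothing gives strict separation $\sup_{\overline\Omega\times[0,T]}|\phi_k|<1$; without it $F''(\phi_k)$ and $F'''(\phi_k)$ are unbounded and the chain rule $\partial_t F'(\phi_k)=F''(\phi_k)\partial_t\phi_k$ — hence the very term $\int_\Omega F''(\phi_k)|\partial_t\phi_k|^2\,\d x$ that you propose to keep as a ``good term'' — cannot be legitimized.

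This is precisely why the paper inserts two approximation layers that your proposal omits: (i) the viscous Cahn--Hilliard regularization \eqref{vcCH}, with the extra term $\alpha\partial_t\phi$ in the chemical potential, whose solutions satisfy $\max_{\overline\Omega\times[0,T]}|\phi|<1$ together with $\partial_t\phi\in L^\infty(0,T;H^1(\Omega))\cap W^{1,2}(0,T;L^2(\Omega))$ and $\mu\in W^{1,2}(0,T;L^2(\Omega))$ (cf.\ \eqref{vCH-reg}), so that every term in the $\partial_t\mu$-test is classical; and (ii) an approximation of $\phi_0$ by strictly separated $H^3(\Omega)$ data $\phi_{0,n}$ with $\|-\Delta\phi_{0,n}+F'(\phi_{0,n})\|_{H^1(\Omega)}$ controlled, needed both to start the viscous problem and to pass the initial bound $\|\nabla\mu(0)\|_{L^2(\Omega)}\le\|\nabla(-\Delta\phi_0+\Psi'(\phi_0))\|_{L^2(\Omega)}$ to the limit (in the viscous setting this requires the separate argument \eqref{ID-bound}, since the Gronwall functional also contains $\tfrac\alpha2\|\partial_t\phi(0)\|_{L^2(\Omega)}^2$). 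Your mollification of $\uu$ in time is harmless and could replace the paper's density argument for the drift, but on its own it does not repair the missing separation and time-regularity; you would need to add the viscous (or an equivalent potential-regularization) layer and the initial-data approximation for the core estimate to be rigorous.
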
 

\begin{proof}
Let us assume first that $\uu \in {C_0^\infty}(0,T; \H^1_{0,\sigma}(\Omega) \cap \H^2(\Omega))$ and the initial condition $\phi_0$ is such that
\begin{equation}
\label{phi0-1}
\phi_0 \in H^3(\Omega) \ \text{ with } \ \| \phi_0\|_{L^\infty(\Omega)}<1 \ \text{ and } \
\partial_\n \phi_0=0 \  \text{ on } \partial \Omega.
\end{equation}
For any $\alpha \in (0,1)$, we consider the viscous Cahn-Hilliard system with divergence-free drift
\begin{equation}
\label{vcCH}
\begin{cases}
\partial_t \phi +\uu \cdot \nabla \phi = \Delta \mu\\
\mu= \alpha \partial_t \phi -\Delta \phi+F'(\phi)-\theta_0 \phi
\end{cases}
\quad \text{in }\ \Omega \times (0,T),
\end{equation}
which is equipped with the boundary and initial conditions
\begin{equation}
\label{vcCH-c}
\partial_\n \phi=\partial_\n \mu=0 \quad \text{on} \ \partial \Omega \times (0,T), \quad \phi|_{t=0}=\phi_{0} \quad \text{in }\ \Omega.
\end{equation}
Thanks to \cite[Theorem A.1]{G2021}, there exists a unique solution  such that
\begin{align}
\label{vCH-reg}
\begin{split}
&\phi \in L^\infty(0,T;H^3(\Omega))  \ \text{ with }\  \max_{(x,t)\in \overline{\Omega} \times [0,T]} |\phi(x,t)| <1, \\
& \partial_t \phi \in L^\infty(0,T;H^1(\Omega))\cap L^2(0,T;H^2(\Omega))\cap W^{1,2}(0,T;L^2(\Omega)),\\
&\mu \in L^\infty(0,T;H^2(\Omega))\cap W^{1,2}(0,T;L^2(\Omega)).
\end{split}
\end{align}
The pair $(\phi,\mu)$ satisfies \eqref{vcCH} almost everywhere in $\Omega \times (0,T)$, the boundary conditions $\partial_\n \phi=\partial_\n \mu=0$ almost everywhere on $\partial \Omega \times (0,T)$ and $\phi (\cdot, 0)= \phi_0(\cdot)$ in $\Omega$.

We now proceed with the conservation of mass and the first energy estimate. Integrating \eqref{vcCH}$_1$ over $\Omega$, exploiting the incompressibility and the no-slip boundary condition of the velocity field $\uu$, we infer that
\begin{equation}
\label{cons-mass}
\int_{\Omega} \phi(t) \, \d x= \int_{\Omega} \phi_{0} \, \d x, \quad \forall \, t \in [0,T].
\end{equation}
Multiplying \eqref{vcCH}$_1$ by $\mu$, integrating over $\Omega$ and exploiting the definition of $\mu$, we find for almost every $t\in (0,T)$ 
\begin{equation}
\label{CH-mu}
\ddt \left( \int_{\Omega} \frac12 |\nabla \phi|^2 + \Psi(\phi) \, \d x\right)
+ \int_{\Omega}  |\nabla \mu|^2 +\alpha  |\partial_t \phi|^2 \, \d x + \int_{\Omega} \uu \cdot \nabla \phi \, \mu \, \d x=0.
\end{equation}
Since $\uu(t)$ belongs to $\H^1_{0,\sigma}(\Omega)$, we have that $\int_{\Omega} \uu \cdot \nabla \phi \, \mu \, \d x=- \int_{\Omega} \uu \cdot \nabla \mu \, \phi \, \d x$. Then, thanks to the $L^\infty$ bound of $\phi$ in \eqref{vCH-reg}, we easily reach
\begin{equation}
\label{EE-CH}
\ddt \left( \int_{\Omega} \frac12 |\nabla \phi|^2 + \Psi(\phi) \, \d x\right)
+ \frac12 \int_{\Omega}  |\nabla \mu|^2 \, \d x+ \int_{\Omega}\alpha  |\partial_t \phi|^2 \, \d x \leq  \frac12 \| \uu\|_{L^2(\Omega)}^2.
\end{equation}
An integration in time on $[0,t]$, with $0<t\leq T$, yields
\begin{equation}
\label{EE-2}
\begin{split}
\sup_{t \in [0,T]} \|\nabla \phi(t)\|_{L^2(\Omega)}^2 
&+ \int_0^T \|\nabla \mu(s)\|_{L^2(\Omega)}^2 \, \d s
+2 \alpha  \int_0^T \|\partial_t \phi(s)\|_{L^2(\Omega)}^2 \, \d s \\
&\leq 
\theta_0 |\Omega|
+ \|\nabla \phi_0\|_{L^2(\Omega)}^2 
+ 2\int_{\Omega} \Psi(\phi_0) \, \d x + \int_0^T \| \uu(s)\|_{L^2(\Omega)}^2 \, \d s.
\end{split}
\end{equation}
By using \eqref{cons-mass}, we obtain
\begin{equation}
\label{EE-3}
\| \phi \|_{L^\infty(0,T;H^1(\Omega))}\leq C_0, \quad
\| \nabla \mu\|_{L^2(0,T;L^2(\Omega))}\leq C_0, \quad
\sqrt{\alpha}\| \partial_t \phi \|_{L^2(0,T;L^2(\Omega))}\leq C_0,
\end{equation}
where the constant $C_0$ depends only on $E_{\text{free}}(\phi_0)$, $|\overline{\phi_0}|$, $\theta_0$, $\Omega $ and $\| \uu \|_{L^2(0,T; L^2(\Omega))}$, but is independent of $\alpha$ and depends on $\theta$ only through $E_{\text{free}}(\phi_0)$.

Next, we derive some preliminary estimates which will play a crucial role for the subsequent part. We recall the well-known inequality (see, for instance, \cite{MZ})
\begin{equation}
\label{F'-L1}
\int_{\Omega} \left| F'(\phi) \right| \, \d x \leq C_1 \int_{\Omega} F'(\phi) \left( \phi-\overline{\phi_{0}} \right) \, \d x+ C_2,
\end{equation}
where the positive constant $C_1$ only depends only on  $\overline{\phi_{0}}$ and $C_2$ only depends on $\theta$ and $\overline{\phi_{0}}$. We also observe that $C_2$ can be chosen to depend only on $\overline{\phi_{0}}$ if we 
restrict $\theta$ to lie in a bounded interval. This can be seen if we consider \eqref{F'-L1} for $\theta=1$ and then multiply the equation by $\theta$.
Multiplying \eqref{vcCH}$_2$ by $\phi - \overline{\phi_{0}}$ (cf. \eqref{cons-mass}), we find
\begin{align*}
\int_{\Omega} |\nabla \phi|^2 \, \d x&+ 
\int_{\Omega} F'(\phi) \left( \phi -\overline{\phi_{0}} \right) \, \d x \\
&= - \alpha \int_{\Omega} \partial_t \phi \left(\phi-\overline{\phi_{0}}\right)\, \d x+ \int_{\Omega} (\mu-\overline{\mu}) \phi \, \d x + \theta_0 \int_{\Omega} \phi \left( \phi -\overline{\phi_{0}} \right) \, \d x.
\end{align*}
By the generalized Poincar\'{e} inequality and the $L^\infty$ bound of $\phi$, we reach
\begin{equation}
\label{F'-L1e}
 \int_{\Omega} \left| F'(\phi) \right| \, \d x  \leq 
C \left(1+ \| \nabla \mu\|_{L^2(\Omega)} + \alpha \| \partial_t \phi\|_{L^2(\Omega)}\right),
\end{equation}
where $C$ only depends on the Poincar\'{e} constant, $\theta_0$, $C_1$, $C_2$ and $\Omega$.
Then, since $\overline{\mu}= \overline{F'(\phi)}- \theta_0 \overline{\phi_{0}}$, we infer from \eqref{F'-L1} and \eqref{F'-L1e} that
$$
|\overline{\mu}|\leq C\left(1+ \| \nabla \mu\|_{L^2(\Omega)} + \alpha \| \partial_t \phi\|_{L^2(\Omega)}\right).
$$ 
As a consequence, we deduce that 
\begin{equation}
\label{mu-H1e}
\| \mu\|_{H^1(\Omega)}\leq 
C \left( 1+ \| \nabla \mu\|_{L^2(\Omega)}+ \alpha \| \partial_t \phi\|_{L^2(\Omega)} \right).
\end{equation}
Besides, multiplying \eqref{vcCH}$_2$ by $|F'(\phi)|^{p-2}F'(\phi)$, with $p\geq 2$, and integrating over $\Omega$, we find
\begin{align*}
\int_{\Omega} (p-1) |F'(\phi)|^{p-2}F''(\phi) |\nabla \phi|^2 \, \d x + \| F'(\phi)\|_{L^p(\Omega)}^p 
=  \int_{\Omega} \left( -\alpha \partial_t \phi + \mu + \theta_0 \phi \right) \,   |F'(\phi)|^{p-2}F'(\phi) \, \d x.
\end{align*}
Here these computations are justified since  $\phi$ is separated from the pure phases (cf. \eqref{vCH-reg}). Then, by the H\"{o}lder inequality, it follows that
\begin{equation}
\label{F'-p}
\| F'(\phi)\|_{L^p(\Omega)}\leq C\left( 1+ \alpha \| \partial_t \phi\|_{L^p(\Omega)} + \| \mu\|_{L^p(\Omega)} \right).
\end{equation}
Combining \eqref{F'-p} with \eqref{vcCH}$_2$, the elliptic regularity of the Neumann problem yields that
\begin{equation}
\label{phi2-p}
\| \phi\|_{W^{2,p}(\Omega)}\leq C\left( 1+ \alpha \| \partial_t \phi\|_{L^p(\Omega)} + \| \mu\|_{L^p(\Omega)} \right).
\end{equation}
In addition, we also find by comparison in \eqref{vcCH}$_1$ that
\begin{equation}
\label{phit}
\| \partial_t \phi\|_{H^1(\Omega)'}\leq C \left( \|\nabla \mu \|_{L^2(\Omega)}+ \| \uu \|_{L^2(\Omega)} \right).
\end{equation}
The positive constants $C$ in \eqref{mu-H1e}-\eqref{phit} may vary from line to line, but they only depend on $\overline{\phi_0}$, $\theta$, $\theta_0$, $\Omega$ and $C_1, C_2$. In particular, they are all independent of $\alpha$ and stay bounded for $\theta$
belonging to a bounded interval.

We now carry out the higher-order Sobolev energy estimates. Multiplying \eqref{vcCH}$_1$ by $\partial_t \mu$ and integrating over $\Omega$, we find
$$
\frac12 \ddt \| \nabla \mu\|_{L^2(\Omega)}^2 + \int_{\Omega} \partial_t \phi \, \partial_t \mu \,\d x + \int_{\Omega} \uu \cdot \nabla \phi \, \partial_t \mu \, \d x=0.
$$
By definition of $\mu$, we observe that 
$$
\int_{\Omega} \partial_t \phi \, \partial_t \mu \,\d x 
= \frac{\alpha}{2} \ddt \| \partial_t \phi\|_{L^2(\Omega)}^2 
+ \int_{\Omega} |\nabla \partial_t \phi|^2 \, \d x + \int_{\Omega} F''(\phi) |\partial_t \phi |^2 \, \d x- \theta_0 \int_{\Omega}  |\partial_t \phi |^2 \, \d x.
$$
Similarly, by exploiting the incompressibilty and the no-slip boundary condition of the velocity field, we notice that
\begin{align*}
\int_{\Omega} \uu \cdot \nabla \phi \, \partial_t \mu \, \d x
&= \int_{\Omega} \uu \cdot \nabla \phi \, \left( \alpha \partial^2_t \phi -\Delta \partial_t \phi + F''(\phi)\partial_t \phi -\theta_0 \partial_t  \phi\right) \, \d x\\
&= \ddt \left( \alpha \int_\Omega \uu \cdot \nabla \phi \, \partial_t \phi \, \d x\right)
- \alpha \int_\Omega \partial_t \uu \cdot \nabla \phi \, \partial_t \phi \, \d x
- \underbrace{\alpha \int_{\Omega} \uu \cdot \nabla \partial_t \phi \, \partial_t \phi \, \d x}_{=0} \\
&\quad + \int_{\Omega} \nabla \left( \uu \cdot \nabla \phi\right) \cdot \nabla \partial_t \phi \, \d x
- \underbrace{\int_{\partial \Omega} (\uu \cdot \nabla \phi) (\nabla \partial_t \phi \cdot \n) \, \d \sigma}_{=0}\\
&\quad + \int_{\Omega} \uu \cdot \left( F''(\phi) \nabla \phi \right) \partial_t \phi \, \d x - \theta_0 \int_{\Omega} \uu \cdot \nabla \phi \, \partial_t \phi \, \d x\\
&= \ddt \left( \alpha \int_\Omega \uu \cdot \nabla \phi \, \partial_t \phi \, \d x\right)
+ \alpha \int_\Omega \partial_t \uu \cdot \nabla \partial_t \phi \,  \phi \, \d x
+ \int_{\Omega} \left( \nabla \uu^T \nabla \phi \right) \cdot \nabla \partial_t \phi \, \d x
\\
&\quad
+\int_{\Omega} \left( \nabla^2 \phi \, \uu \right) \cdot \nabla \partial_t \phi \, \d x 
+ \int_{\Omega} \uu \cdot \nabla \left( F'(\phi) \right) \partial_t \phi \, \d x  - \theta_0 \int_{\Omega} \uu \cdot \nabla \phi \, \partial_t \phi \, \d x\\
&= \ddt \left( \alpha \int_\Omega \uu \cdot \nabla \phi \, \partial_t \phi \, \d x\right)
+ \alpha \int_\Omega \partial_t \uu \cdot \nabla \partial_t \phi \,  \phi \, \d x
 + \int_{\Omega} \left( \nabla \uu^T \nabla \phi \right) \cdot \nabla \partial_t \phi \, \d x
\\ 
&\quad +
\int_{\Omega} \left( \nabla^2 \phi \, \uu \right) \cdot \nabla \partial_t \phi \, \d x  -
\int_{\Omega} \uu \cdot \nabla \partial_t \phi  \, F'(\phi) \, \d x + \theta_0 \int_{\Omega} \uu \cdot \nabla \partial_t \phi \, \phi \, \d x.
\end{align*}
Then, we arrive at
\begin{equation}
\label{diff-eq}
\begin{split}
& \ddt \left( \frac12 \| \nabla \mu\|_{L^2(\Omega)}^2 +\frac{\alpha}{2} \| \partial_t \phi\|_{L^2(\Omega)}^2 + \alpha \int_\Omega \uu \cdot \nabla \phi \, \partial_t \phi \, \d x \right)+ \int_{\Omega} |\nabla \partial_t \phi|^2 \, \d x + \int_{\Omega} F''(\phi) |\partial_t \phi |^2 \, \d x \\
&\quad =\theta_0 \int_{\Omega}  |\partial_t \phi |^2 \, \d x 
- \alpha \int_{\Omega} \partial_t \uu \cdot \nabla \partial_t \phi \, \phi \, \d x 
- \int_{\Omega} \left( \nabla \uu^T \nabla \phi \right) \cdot \nabla \partial_t \phi \, \d x-
\int_{\Omega} \left( \nabla^2 \phi \, \uu \right) \cdot \nabla \partial_t \phi \, \d x\\ 
&\quad \quad +
\int_{\Omega} \uu \cdot \nabla \partial_t \phi  \, F'(\phi) \, \d x
- \theta_0 \int_{\Omega} \uu \cdot \nabla \partial_t \phi \, \phi \, \d x.
\end{split}
\end{equation}
In order to estimate the terms on the right-hand side of \eqref{diff-eq}, combining \eqref{F'-p} and \eqref{phi2-p} with \eqref{mu-H1e} through the Sobolev embedding theorem, we have
\begin{equation}
\label{phi2}
\| \phi\|_{W^{2,p}(\Omega)}+ \| F'(\phi)\|_{L^p(\Omega)}
\leq  \alpha C \| \partial_t \phi\|_{L^p(\Omega)} + C_p \left( 1+\| \nabla \mu \|_{L^2(\Omega)} \right), 
\end{equation}
where $2\leq p <\infty$ if $d=2$ and $p=6$ if $d=3$.
Here the positive constants $C$ and $C_p$ are independent of $\alpha$ and remains bounded for $\theta$ in a bounded interval. Recalling that $\partial_t \phi$ is mean-free, thanks to \eqref{phit} and the generalized Poincar\'{e} inequality, we obtain
\begin{align*}
\theta_0 \int_{\Omega}  |\partial_t \phi |^2 \, \d x 
&\leq C \| \nabla \partial_t \phi\|_{L^2(\Omega)} \| \partial_t \phi\|_{H^1(\Omega)'}\\
&\leq \frac{1}{12} \| \nabla \partial_t \phi\|_{L^2(\Omega)}^2 + C \left( \| \uu\|_{L^2(\Omega)}^2+ \| \nabla \mu\|_{L^2(\Omega)}^2 \right).
\end{align*}
By using the $L^\infty$ bound of $\phi$ in \eqref{vCH-reg}, we also get
\begin{align*}
\left| \alpha \int_{\Omega} \partial_t\uu \cdot \nabla \partial_t \phi \, \phi \, \d x  \right| 
&\leq \alpha \| \partial_t \uu\|_{L^2(\Omega)} \| \nabla \partial_t \phi\|_{L^2(\Omega)} \| \phi\|_{L^\infty(\Omega)}\\
&\leq \frac{1}{12} \| \nabla \partial_t \phi\|_{L^2(\Omega)}^2 + \alpha^2 C \| \partial_t \uu\|_{L^2(\Omega)}^2.
\end{align*}
Exploiting \eqref{phi2}, the Gagliardo-Nirenberg interpolation inequality and the Sobolev embedding theorem, we infer that
\begin{align*}
\left| \int_{\Omega} \left( \nabla \uu^T \nabla \phi \right) \cdot \nabla \partial_t \phi \, \d x \right|
&\leq \| \nabla \uu\|_{L^2(\Omega)} \| \nabla \phi\|_{L^\infty(\Omega)} \| \nabla \partial_t \phi\|_{L^2(\Omega)}\\
& \leq \frac{1}{24} \| \nabla \partial_t \phi\|_{L^2(\Omega)}^2 +
C \| \nabla \uu\|_{L^2(\Omega)}^2 \| \phi \|_{W^{2,4}(\Omega)}^2\\
& \leq \frac{1}{24} \| \nabla \partial_t \phi\|_{L^2(\Omega)}^2 +
C \| \nabla \uu\|_{L^2(\Omega)}^2 \left(1+ \alpha^2 \| \partial_t \phi\|_{L^4(\Omega)}^2 + \| \nabla \mu\|_{L^2(\Omega)}^2 \right) \\
& \leq \frac{1}{24} \| \nabla \partial_t \phi\|_{L^2(\Omega)}^2 +  \alpha^2 C \| \nabla \uu\|_{L^2(\Omega)}^2  \| \partial_t \phi\|_{L^2(\Omega)}^\frac12 \| \nabla \partial_t \phi\|_{L^2(\Omega)}^\frac32\\
&\quad + 
C \| \nabla \uu\|_{L^2(\Omega)}^2 \left( 1+ \| \nabla \mu \|_{L^2(\Omega)}^2 \right)\\
&\leq \frac{1}{12} \| \nabla \partial_t \phi\|_{L^2(\Omega)}^2 +  \alpha^8 C \| \nabla \uu\|_{L^2(\Omega)}^8  \| \partial_t \phi\|_{L^2(\Omega)}^2\\
&\quad + 
C \| \nabla \uu\|_{L^2(\Omega)}^2 \left( 1+ \| \nabla \mu \|_{L^2(\Omega)}^2 \right)
\end{align*}
and 
\begin{align*}
\left|
\int_{\Omega} \left( \nabla^2 \phi \, \uu \right) \cdot \nabla \partial_t \phi \, \d x \right|
&\leq \| \phi\|_{W^{2,3}(\Omega)} \| \uu\|_{L^6(\Omega)} \| \nabla \partial_t \phi\|_{L^2(\Omega)}\\
& \leq  C\left( 1+ \alpha \| \partial_t \phi\|_{L^3(\Omega)}+ \| \nabla \mu \|_{L^2(\Omega)} \right) \| \nabla \uu\|_{L^2(\Omega)} \| \nabla \partial_t \phi\|_{L^2(\Omega)}\\
&\leq \frac{1}{24} \| \nabla \partial_t \phi\|_{L^2(\Omega)}^2 +  \alpha^2 C \| \partial_t \phi\|_{L^2(\Omega)} \| \nabla \partial_t \phi\|_{L^2(\Omega)} \| \nabla \uu\|_{L^2(\Omega)}^2\\
&\quad +
C \| \nabla \uu\|_{L^2(\Omega)}^2 \left( 1+ \| \nabla \mu \|_{L^2(\Omega)}^2 \right)\\
&\leq \frac{1}{12} \| \nabla \partial_t \phi\|_{L^2(\Omega)}^2 +  \alpha^4 C \| \partial_t \phi\|_{L^2(\Omega)}^2 \| \nabla \uu\|_{L^2(\Omega)}^4+
C \| \nabla \uu\|_{L^2(\Omega)}^2 \left( 1+ \| \nabla \mu \|_{L^2(\Omega)}^2 \right),
\end{align*}
as well as
\begin{align*}
\left| \int_{\Omega} \uu \cdot \nabla \partial_t \phi  \, F'(\phi) \, \d x \right| 
&\leq \| \uu\|_{L^6(\Omega)} \| \nabla \partial_t \phi\|_{L^2(\Omega)} \| F'(\phi)\|_{L^3(\Omega)}\\
&\leq \frac{1}{24} \| \nabla \partial_t \phi\|_{L^2(\Omega)}^2 + C \| \nabla \uu\|_{L^2(\Omega)}^2 \left( 1+ \alpha^2 \| \partial_t \phi\|_{L^3(\Omega)}^2 + \| \nabla \mu \|_{L^2(\Omega)}^2 \right)\\
&\leq \frac{1}{24} \| \nabla \partial_t \phi\|_{L^2(\Omega)}^2 +
 \alpha^2 C \| \partial_t \phi\|_{L^2(\Omega)}  
 \| \nabla \partial_t \phi\|_{L^2(\Omega)}  \| \nabla \uu\|_{L^2(\Omega)}^2\\
&\quad + C \| \nabla \uu\|_{L^2(\Omega)}^2 \left( 1+ \| \nabla \mu \|_{L^2(\Omega)}^2 \right)\\
&\leq \frac{1}{12} \| \nabla \partial_t \phi\|_{L^2(\Omega)}^2 +  \alpha^4 C \| \partial_t \phi\|_{L^2(\Omega)}^2 \| \nabla \uu\|_{L^2(\Omega)}^4+
C \| \nabla \uu\|_{L^2(\Omega)}^2 \left( 1+ \| \nabla \mu \|_{L^2(\Omega)}^2 \right).
\end{align*}
By using \eqref{vCH-reg}, we find that
\begin{align*}
\left| \theta_0 \int_{\Omega} \uu \cdot \nabla \partial_t \phi \, \phi \, \d x\right|
&\leq C \| \uu\|_{L^2(\Omega)} \| \nabla \partial_t \phi\|_{L^2(\Omega)} \| \phi\|_{L^\infty(\Omega)} \\
&\leq \frac{1}{12} \| \nabla \partial_t \phi\|_{L^2(\Omega)}^2 +
C \| \uu\|_{L^2(\Omega)}^2.
\end{align*}
The positive constants $C$ in all the above estimates depend on the parameters of the system, such as $\theta_0$, $\theta$, $\Omega$ and $\overline{\phi_0}$, but are independent of $\alpha$, $\uu$, $T$ and the norms of the initial condition $\phi_0$.
They also are uniformly bounded for $\theta$ from a bounded interval. Therefore, collecting the above inequalities, we end up with the differential inequality
 \begin{equation}
 \label{DI-alpha}
\begin{split}
 &\ddt \left( \frac12 \| \nabla \mu\|_{L^2(\Omega)}^2 +\frac{\alpha}{2} \| \partial_t \phi\|_{L^2(\Omega)}^2 + \alpha \int_\Omega \uu \cdot \nabla \phi \, \partial_t \phi \, \d x \right) + \frac12 \int_{\Omega} |\nabla \partial_t \phi|^2 \, \d x \\
&\leq C \left( \| \nabla \uu\|_{L^2(\Omega)}^2 + \alpha^3  \| \nabla \uu\|_{L^2(\Omega)}^4 + \alpha^7 \| \nabla \uu\|_{L^2(\Omega)}^8 \right) \left(  \frac12 \| \nabla \mu\|_{L^2(\Omega)}^2 +\frac{\alpha}{4} \| \partial_t \phi\|_{L^2(\Omega)}^2 \right) \\
&\quad + \alpha C \| \partial_t \uu\|_{L^2(\Omega)}^2 + C  \| \nabla \uu\|_{L^2(\Omega)}^2.
\end{split}
\end{equation}
In light of \eqref{EE-3}, we observe that 
\begin{equation}
\label{alpha-term}
\begin{split}
\left| \alpha \int_{\Omega} \uu \cdot \nabla \phi \partial_t \phi \, \d x \right|
&\leq \alpha \| \uu\|_{L^\infty(\Omega)} \| \nabla \phi\|_{L^2(\Omega)}
\| \partial_t \phi\|_{L^2(\Omega)} \\
&\leq \frac{\alpha}{4} \|\partial_t \phi\|_{L^2(\Omega)}^2 +
\alpha C_0 \| \uu\|_{L^\infty(\Omega)}^2.
\end{split}
\end{equation}
Owing to this, we rewrite \eqref{DI-alpha} as follows
\begin{equation}
 \label{DI-alpha2}
\begin{split}
 &\ddt \left( \frac12 \| \nabla \mu\|_{L^2(\Omega)}^2 +\frac{\alpha}{2} \| \partial_t \phi\|_{L^2(\Omega)}^2 + \alpha \int_\Omega \uu \cdot \nabla \phi \, \partial_t \phi \, \d x \right) + \frac12 \int_{\Omega} |\nabla \partial_t \phi|^2 \, \d x \\
&\leq \mathcal{F}_1 \left(  \frac12 \| \nabla \mu\|_{L^2(\Omega)}^2 +\frac{\alpha}{2} \| \partial_t \phi\|_{L^2(\Omega)}^2+ \alpha \int_\Omega \uu \cdot \nabla \phi \, \partial_t \phi \, \d x  \right) + \mathcal{F}_2,
\end{split}
\end{equation}
where 
\begin{equation}
\label{F-def}
\begin{split}
&\mathcal{F}_1 =C \left( \| \nabla \uu \|_{L^2(\Omega)}^2 + \alpha^3  \| \nabla \uu\|_{L^2(\Omega)}^4 + \alpha^7 \| \nabla \uu\|_{L^2(\Omega)}^8 \right), 
\\
&\mathcal{F}_2= C \left( \alpha \| \partial_t \uu\|_{L^2(\Omega)}^2 +   \| \nabla \uu\|_{L^2(\Omega)}^2 \right) + \alpha C_0 \| \uu\|_{L^\infty(\Omega)}^2 \mathcal{F}_1. 
\end{split}
\end{equation}
Since $\uu \in {C_0^\infty}(0,T; \H_{0,\sigma}^1(\Omega)\cap \H^2(\Omega))$ by assumption, it is easily seen that $\mathcal{F}_1$, $\mathcal{F}_2 \in L^1(0,T)$. 
Thanks to the Gronwall lemma, we deduce that
\begin{equation*}
\begin{split}
&\sup_{t\in [0,T]} \left(  \frac12 \| \nabla \mu\|_{L^2(\Omega)}^2 +\frac{\alpha}{2} \| \partial_t \phi\|_{L^2(\Omega)}^2 + \alpha \int_\Omega \uu \cdot \nabla \phi \, \partial_t \phi \, \d x  \right)\\
&\leq \left( \frac12\| \nabla \mu(0)\|_{L^2(\Omega)}^2 + \frac{\alpha}{2} \|\partial_t \phi(0) \|_{L^2(\Omega)}^2 + \alpha 
\underbrace{\int_\Omega \uu(0)\cdot \nabla \phi(0) \,\partial_t \phi(0) \, \d x}_{=0} +  \int_0^T \mathcal{F}_2(s) \, \d s\right) \\
&\quad \times \mathrm{exp}\left( \int_0^T \mathcal{F}_1(s) \, \d s\right).
\end{split}
\end{equation*}
By exploiting \eqref{alpha-term}, we then arrive at
\begin{equation*}
\begin{split}
&\sup_{t\in [0,T]} \left(  \| \nabla \mu\|_{L^2(\Omega)}^2 +\alpha \| \partial_t \phi\|_{L^2(\Omega)}^2  \right)\\
&\leq \left( 2\| \nabla \mu(0)\|_{L^2(\Omega)}^2 + 2 \alpha \|\partial_t \phi(0) \|_{L^2(\Omega)}^2 +  4 \int_0^T \mathcal{F}_2(s) \, \d s\right) \mathrm{exp}\left( \int_0^T \mathcal{F}_1(s) \, \d s\right)  + 4 \alpha C_0 \| \uu\|_{L^\infty(\Omega)}^2.
\end{split}
\end{equation*}
We observe that 
$\partial_t \phi \in BC([0,T];H^1(\Omega))$ and $ \mu  \in BC([0,T];H^1(\Omega))$ due to \eqref{vCH-reg}.
By comparison in \eqref{vcCH}$_2$, it follows that $-\Delta \phi +\Psi'(\phi) \in BC([0,T];H^1(\Omega))$. Now, multiplying \eqref{vcCH}$_2$ by $\partial_t \phi$ and integrating over $\Omega$, we have
$$
\alpha \| \partial_t \phi\|_{L^2(\Omega)}^2+ \left( -\Delta \phi +\Psi'(\phi ), \partial_t \phi \right) = (\mu, \partial_t \phi).
$$
By using \eqref{vcCH}$_1$, we notice that
$$
\alpha \| \partial_t \phi\|_{L^2(\Omega)}^2
+ \left( -\Delta \phi +\Psi'(\phi), \Delta \mu - \uu \cdot \nabla \phi \right) 
= \left( \mu, \Delta \mu -\uu \cdot \nabla \phi \right).
$$
Integrating by parts and exploiting the boundary conditions of $\mu$ and $\uu$,  we deduce thatt
\begin{equation}
\label{ID-eq}
\alpha \| \partial_t \phi\|_{L^2(\Omega)}^2
+\| \nabla \mu\|_{L^2(\Omega)}^2
= \left( \nabla (-\Delta \phi +\Psi'(\phi)), \nabla \mu - \phi \, \uu \right)+
\left( \nabla \mu, \phi \, \uu \right).
\end{equation}
By continuity of each term in \eqref{ID-eq} and recalling that $\uu \in {C_0^\infty}(0,T; \H_{0,\sigma}^1(\Omega))$, we infer that
\begin{align*}
\alpha \| \partial_t \phi(0)\|_{L^2(\Omega)}^2
+\| \nabla \mu(0) \|_{L^2(\Omega)}^2
= \left( \nabla \left( -\Delta \phi_{0} +\Psi'(\phi_{0}) \right), \nabla \mu(0) \right),
\end{align*}
which, in turn, entails that 
\begin{equation}
\label{ID-bound}
\alpha \| \partial_t \phi(0)\|_{L^2(\Omega)}^2
+ \| \nabla \mu(0) \|_{L^2(\Omega)}^2
\leq 2 \left\| \nabla \left( -\Delta \phi_{0} +\Psi'(\phi_{0}) \right) \right\|_{L^2(\Omega)}^2.
\end{equation}
As a consequence, we find 
\begin{equation}
\label{E1-alpha}
\begin{split}
&\sup_{t\in [0,T]} \left( \| \nabla \mu(t)\|_{L^2(\Omega)}^2 
+ \alpha \|\partial_t \phi(t) \|_{L^2(\Omega)}^2 \right)\\
& \leq \left(  4 \left\| \nabla \left( -\Delta \phi_{0} +\Psi'(\phi_{0}) \right) \right\|_{L^2(\Omega)}^2
+ 4 \int_0^T \mathcal{F}_2(s) \, \d s\right) \mathrm{exp}\left( \int_0^T \mathcal{F}_1(s) \, \d s\right)  + 4 \alpha C_0 \| \uu\|_{L^\infty(\Omega)}^2.
\end{split}
\end{equation}
Besides, integrating \eqref{DI-alpha2} on $[0,T]$, and exploiting \eqref{alpha-term}, \eqref{ID-bound} and \eqref{E1-alpha}, we have
\begin{equation}
\label{E2-alpha}
\begin{split}
&\int_0^T \| \nabla \partial_t \phi(s)\|_{L^2(\Omega)}^2 \, \d s\\
&\leq  2 \left\| \nabla \left( -\Delta \phi_{0} +\Psi'(\phi_{0}) \right) \right\|_{L^2(\Omega)}^2 \\
&\quad +6 \left( \left\| \nabla \left(-\Delta \phi_{0} +\Psi'(\phi_{0}) \right) \right\|_{L^2(\Omega)}^2
+ \int_0^T \mathcal{F}_2(s) \, \d s\right) \left( \int_0^T \mathcal{F}_1(s)\, \d s \right) \mathrm{exp}\left( \int_0^T \mathcal{F}_1(s) \, \d s\right)  
\\
& \quad + 8 \alpha C_0 \| \uu\|_{L^\infty(\Omega)}^2  \int_0^T \mathcal{F}_1(s)\, \d s + 2\int_0^T \mathcal{F}_2 \, \d s.
\end{split}
\end{equation}
Since $\alpha \in (0,1]$, $\uu \in {C_0^\infty}(0,T; \H_{0,\sigma}^1(\Omega)\cap \H^2(\Omega))$ and $-\Delta \phi_0 + \Psi'(\phi_0) \in H^1(\Omega)$, we thus conclude that
\begin{equation}
\label{EST1}
\| \nabla \mu\|_{L^\infty(0,T; L^2(\Omega))} \leq K_0, \quad 
\sqrt{\alpha} \| \partial_t \phi\|_{L^\infty(0,T;L^2(\Omega))}\leq K_0, \quad
\| \nabla \partial_t \phi\|_{L^2(0,T;L^2(\Omega))}\leq K_0.
\end{equation}
It easily follows from \eqref{mu-H1e} and  \eqref{phi2} that
\begin{equation}
\label{EST2}
\|  \mu\|_{L^\infty(0,T; H^1(\Omega))} \leq K_1, 
\quad 
\| \phi\|_{L^\infty(0,T; H^2(\Omega))}\leq K_1, 
\end{equation}
and 
\begin{equation}
\label{EST3}
\| \phi\|_{L^2(0,T;W^{2,p}(\Omega))}+ \| F'(\phi)\|_{L^2(0,T;L^p(\Omega))} \leq K_p,
\end{equation}
where $2\leq p <\infty$ if $d=2$ and $p=6$ if $d=3$. The constants $K_0$, $K_1$ and $K_p$ (for $p$ as mentioned above) are independent of $\alpha$ and remain bounded for $\theta$ from a bounded interval.

Let us now consider a sequence of real numbers $\alpha_n \in (0,1]$ such that $\alpha_n \rightarrow 0$ as $n \rightarrow \infty$. Thanks to the above analysis, there exists a sequence of pairs $(\phi_{\alpha_n}, \mu_{\alpha_n})$ such that
\begin{equation}
\label{vcCH-alpha}
\begin{cases}
\partial_t \phi_{\alpha_n} +\uu \cdot \nabla \phi_{\alpha_n} = \Delta \mu_{\alpha_n}\\
\mu_{\alpha_n}= \alpha_n \partial_t \phi_{\alpha_n} -\Delta \phi_{\alpha_n}+F'(\phi_{\alpha_n})-\theta_0 \phi_{\alpha_n}
\end{cases}
\quad \text{a.e. in }\ \Omega \times (0,T),
\end{equation}
and  $\partial_\n \phi_{\alpha_n}=\partial_\n \mu_{\alpha_n}=0$ almost everywhere on $\partial \Omega \times (0,T)$, $ \phi_{\alpha_n}(\cdot,0)=\phi_{0}$ in $\Omega$. Each pair $(\phi_{\alpha_n}, \mu_{\alpha_n})$ satisfies \eqref{EST1}-\eqref{EST3} replacing $(\phi, \mu)$ with $(\phi_{\alpha_n}, \mu_{\alpha_n})$. Thus, there exists a subsequence (still denoted in the same way) $(\phi_{\alpha_n}, \mu_{\alpha_n})$ such that
\begin{equation}
\label{alpha-limit}
\begin{split}
\begin{aligned}
&\phi_{\alpha_n} \rightharpoonup \phi \quad  &&\text{weakly in } L^2(0,T; W^{2,p}(\Omega)),\\
&\phi_{\alpha_n} \rightharpoonup \phi \quad  &&\text{weakly in } 
W^{1,2}(0,T;H^1(\Omega)),\\
&\mu_{\alpha_n}  \rightharpoonup \mu \quad  &&\text{weak-star in } L^\infty(0,T;H^1(\Omega)),
\end{aligned}
\end{split}
\end{equation}
for any $2\leq p <\infty$ if $d=2$ and $p=6$ if $d=3$,
and, by the Aubin-Lions theorem, 
\begin{equation}
\label{sl-SS-alpha}
\begin{split}
\begin{aligned}
&\phi_{\alpha_n} \rightarrow \phi \quad  &&\text{strongly in } BC([0,T];W^{1,q}(\Omega)), 
\end{aligned}
\end{split}
\end{equation}
for all $2\leq q <\infty$ if $d=2$ and $2\leq q < 6$ if $d=3$. In order to pass to the limit in the logarithmic function $F'$, we recall from \eqref{vCH-reg} that 
$$
\phi_{\alpha_n} \in L^\infty(\Omega\times (0,T)) \ \text{ such that  } \ |\phi_{\alpha_n} (x,t)|<1 \ \text{ a.e. in } \  \Omega\times(0,T).
$$
Thanks to \eqref{sl-SS-alpha}, we infer that $\phi_{\alpha_n} \rightarrow \phi$ almost everywhere in $\Omega \times (0,T)$. As a consequence, we have that
$$
\phi \in L^\infty(\Omega\times (0,T)) \ \text{ with }\  |\phi(x,t)| \leq 1 \ \text{ a.e. in } \  \Omega\times(0,T).
$$
Then, $F'(\phi_{\alpha_n}) \rightarrow \widetilde{F'}(\phi)$ almost everywhere in $\Omega \times (0,T)$, where $\widetilde{F'}(s)=F'(s)$ if $s \in (-1,1)$ and $\widetilde{F'}(\pm 1)=\pm \infty$.
By the Fatou lemma and \eqref{EST3}, $\int_{\Omega \times (0,T)} |\widetilde{F'}(\phi)|^2 \, \d x \d s\leq K_2^2$, which implies that
$\widetilde{F'}(\phi)\in L^2(\Omega \times (0,T))$. This entails that 
$$
\phi \in L^\infty(\Omega\times (0,T)) \ \text{ such that } \ |\phi(x,t)| < 1 \ \text{ a.e. in } \  \Omega\times(0,T),
$$
and $\widetilde{F'}(\phi)= F'(\phi)$ almost everywhere in $\Omega \times (0,T)$.
Owing to this, and by \eqref{EST3}, we conclude that 
$$
F'(\phi_{\alpha_n}) \rightharpoonup F'(\phi) \quad  \text{weakly in } 
L^2(0,T;L^p(\Omega)),
$$
for any $p$ as above.
Thus, letting $n \rightarrow \infty$ in \eqref{vcCH-alpha}, we obtain that $(\phi, \mu)$ solves the Cahn-Hilliard system with divergence-free drift
\begin{equation}
\label{cCH}
\begin{cases}
\partial_t \phi +\uu \cdot \nabla \phi = \Delta \mu\\
\mu= -\Delta \phi+F'(\phi)-\theta_0 \phi
\end{cases}
\quad \text{a.e. in }\ \Omega \times (0,T).
\end{equation}
In addition, $\partial_\n \phi=\partial_\n \mu=0$ almost everywhere on $\partial \Omega \times (0,T)$ and $\phi(\cdot, 0)=\phi_{0}$ in $\Omega$. 
Furthermore, by the lower semi-continuity of the norm with respect to the weak convergence, passing to the limit in \eqref{E1-alpha} and \eqref{E2-alpha}, we have
\begin{equation}
\label{E1-f}
\begin{split}
\| \nabla \mu \|_{L^\infty(0,T;L^2(\Omega))} 
& \leq \left(  4 \left\| \nabla \left( -\Delta \phi_{0} +\Psi'(\phi_{0}) \right) \right\|_{L^2(\Omega)}^2+  4C \int_0^T \| \nabla \uu(s)\|_{L^2(\Omega)}^2 \, \d s\right)^\frac12 \\
&\quad \times \text{exp}\left( C \int_0^T \| \nabla \uu(s)\|_{L^2(\Omega)}^2 \, \d s \right)
\end{split}
\end{equation}
and 
 \begin{equation}
\label{E2-f}
\begin{split}
\int_0^T \| \nabla \partial_t \phi(s)\|_{L^2(\Omega)}^2 \, \d s
&\leq  6 \left( \left\| \nabla \left( -\Delta \phi_{0} +\Psi'(\phi_{0}) \right) \right\|_{L^2(\Omega)}^2
+ C\int_0^T \| \nabla \uu(s)\|_{L^2(\Omega)}^2 \, \d s\right) \\
&\quad \times \left( 1+ \left( C \int_0^T \| \nabla \uu(s)\|_{L^2(\Omega)}^2 \, \d s \right)  \mathrm{exp}\left( C \int_0^T \| \nabla \uu(s)\|_{L^2(\Omega)}^2 \, \d s\right) \right), 
\end{split}
\end{equation}
where the positive constant $C$ depends on $\theta_0$, $\theta$, $\Omega$ and $\overline{\phi_0}$, but is independent of  $\uu$, $T$ and the norms of the initial condition $\phi_0$. In particular, $C$ is bounded if $\theta$ belongs to a bounded interval.
Then, repeating the argument to obtain \eqref{F'-L1e} and \eqref{mu-H1e} without the ($\alpha$) viscous term, we easily recover that 
\begin{equation}
\label{mu-H1}
\| \mu\|_{L^\infty(0,T;H^1(\Omega))}\leq R_0.
\end{equation} 
Since $\partial_t \phi$ is mean-free, we also have
\begin{equation}
\label{phit-H1}
\| \partial_t \phi\|_{L^2(0,T;H^1(\Omega))}\leq R_1.
\end{equation}
By recalling the inequality proven in \cite[Lemma 2]{Abels2009} for \eqref{cCH}$_2$
\begin{equation}
\label{phi2p}
\| \phi\|_{W^{2,p}(\Omega)}+ \| F'(\phi)\|_{L^p(\Omega)}
\leq C_p \left( 1+\| \nabla \mu \|_{L^2(\Omega)} \right), 
\end{equation}
where $2\leq p <\infty$ if $d=2$ and $p=6$ if $d=3$, we obtain that
\begin{equation}
\label{phi-W2p}
\| \phi\|_{L^\infty(0,T; W^{2,p}(\Omega))} 
+ \| F'(\phi)\|_{L^\infty(0,T; L^p(\Omega))} \leq R_2(p).
\end{equation}
Finally, since 
$$
\| \uu \cdot \nabla \phi\|_{L^2(0,T;H^1(\Omega))}
\leq \| \uu \|_{L^2(0,T;H^1(\Omega))} 
\| \nabla \phi\|_{L^\infty(0,T; L^\infty(\Omega))}
+ \| \uu\|_{L^2(0,T; L^6(\Omega))} 
\| \phi\|_{L^\infty(0,T;W^{2,3}(\Omega))},
$$
we easily obtain by the  elliptic regularity theory applied to \eqref{cCH}$_1$ that
\begin{equation}
\label{muH3}
\| \mu\|_{L^2(0,T;H^3(\Omega))}\leq R_4(T).
\end{equation}
Here the positive constants $R_1$, $R_2$, $R_3$ and $R_4$ depend on $\theta_0$, $\theta$, $\Omega$, $\overline{\phi_0}$, $\left\| \nabla \left(-\Delta \phi_{0} +\Psi'(\phi_{0}) \right) \right\|_{L^2(\Omega)}$ and $\| \uu\|_{L^2(0,T;\H_{0,\sigma}^1(\Omega))}$. They all remain bounded
for $\theta$ from a bounded interval. Additionally, $R_4$ is bounded for $T$ bounded.

We are left to show that the existence of the unique solution $\phi$ to \eqref{CH}-\eqref{bcic} satisfying \eqref{REG} holds for any divergence-free velocity $\uu \in L^2(0,\infty;\H_{0,\sigma}^1(\Omega))$ and for any initial condition $\phi_0 \in H^2(\Omega)$ such that 
$\| \phi_0\|_{L^\infty(\Omega)}\leq 1$, $\left|\overline{\phi_0}\right|<1$, $\mu_0=-\Delta \phi_0+\Psi'(\phi_0) \in H^1(\Omega)$ and 
$\partial_\n \phi_0=0$ on $\partial \Omega$. For this purpose, since ${C_0^\infty}(0,T; \H_{0,\sigma}^1(\Omega)\cap \H^2(\Omega))$ is dense in $L^2(0,T ; \H_{0,\sigma}^1(\Omega))$, there exists a sequence $\lbrace \uu_n \rbrace \subset  {C_0^\infty}(0,T; \H_{0,\sigma}^1(\Omega)\cap \H^2(\Omega))$ such that $\uu_n \rightarrow \uu $ in $L^2(0,T;  \H^1_{0,\sigma}(\Omega))$ as $n\to\infty$. 
Besides, it was shown in \cite[proof of Theorem 4.1]{GMT2019} that there exists a sequence $ \lbrace \phi_{0,n}  \rbrace \subset H^3(\Omega)$ such that
\begin{itemize}
\item[1.] There exist $m\in (0,1)$ depending only on $\overline{\phi_0}$ and $\delta=\delta(n) \in (0,1)$ such that
$$
\left| \overline{\phi_{0,n}} \right| \leq m, \quad 
\| \phi_{0,n}\|_{L^\infty(\Omega)}\leq 1-\delta, \quad \forall \, n \in \mathbb{N}.
$$

\item[2.] $\phi_{0,n}\rightarrow \phi_0$ in $H^1(\Omega)$, $\phi_{0,n}\rightharpoonup \phi_0$ weakly in $H^2(\Omega)$ and $-\Delta \phi_{0,n}+ F'(\phi_{0,n}) \rightarrow -\Delta \phi_{0}+ F'(\phi_{0}) $ in $H^1(\Omega)$ with 
$$
\left\| -\Delta \phi_{0,n}+ F'(\phi_{0,n}) \right\|_{H^1(\Omega)} \leq \left\| -\Delta \phi_{0}+ F'(\phi_{0} \right\|_{H^1(\Omega)}.
$$

\item[3.] For any $n \in \mathbb{N}$, $\partial_\n \phi_{0,n}=0$ almost everywhere on $\partial \Omega$.
\end{itemize}
Now, owing to the first part of the proof, for any $n \in \mathbb{N}$, there exists a pair $(\phi_n, \mu_n)$ solving the Cahn-Hilliard system with divergence-free drift
\begin{equation}
\label{cCH-n}
\begin{cases}
\partial_t \phi_n +\uu_n \cdot \nabla \phi_n = \Delta \mu_n\\
\mu_n = -\Delta \phi_n+F'(\phi_n)-\theta_0 \phi_n
\end{cases}
\quad \text{a.e. in }\ \Omega \times (0,T),
\end{equation}
and $\partial_\n \phi_n=\partial_\n \mu_n=0$ almost everywhere on $\partial \Omega \times (0,T)$, as well as $\phi_n(\cdot, 0)=\phi_{0,n}$ in $\Omega$. 
It follows from \eqref{E1-f} and \eqref{E2-f} that 
\begin{equation}
\label{E1-n}
\begin{split}
\| \nabla \mu_n \|_{L^\infty(0,T;L^2(\Omega)} 
&\leq \left(  4 \left\| \nabla \left( -\Delta \phi_{0,n} +\Psi'(\phi_{0,n}) \right) \right\|_{L^2(\Omega)}^2+  4C \int_0^{T} \| \nabla \uu_n(s)\|_{L^2(\Omega)}^2 \, \d s\right)^\frac12\\
&\quad \times \text{exp}\left( C \int_0^{T} \| \nabla \uu_n(s)\|_{L^2(\Omega)}^2 \, \d s \right),
\end{split}
\end{equation}
and 
 \begin{equation}
\label{E2-n}
\begin{split}
\int_0^{T} \| \nabla \partial_t \phi_n(s)\|_{L^2(\Omega)}^2 \, \d s
&\leq 6 \left(  \left\| \nabla \left(-\Delta \phi_{0,n} +\Psi'(\phi_{0,n}) \right) \right\|_{L^2(\Omega)}^2+  C \int_0^{T} \| \nabla \uu_n(s)\|_{L^2(\Omega)}^2 \, \d s\right) \\
&\quad \times \left( 1+ \left( C \int_0^{T} \| \nabla \uu_n(s)\|_{L^2(\Omega)}^2 \, \d s\right) 
 \mathrm{exp}\left( C \int_0^{T} \| \nabla \uu_n(s)\|_{L^2(\Omega)}^2 \, \d s \right)\right).
\end{split}
\end{equation}
In light of the properties of $\uu_n$ and $\phi_{0,n}$, we simply obtain that
\begin{equation}
\| \nabla \mu_n \|_{L^\infty(0,T;L^2(\Omega))} \leq Q_1,\quad
\| \partial_t \phi_n\|_{L^2(0,T;H^1(\Omega))}\leq Q_2,
\end{equation}
where the positive constants $Q_1$ and $Q_2$ are independent of $n$. By reasoning as above to get \eqref{mu-H1}-\eqref{muH3}, it follows that
\begin{equation}
\begin{split}
&\| \mu_n\|_{L^\infty(0,T;H^1(\Omega))}\leq Q_3, \quad 
\| \mu_n\|_{L^2(0,T;H^3(\Omega))}\leq Q_4(T),\\
&\| \phi_n\|_{L^\infty(0,T; W^{2,p}(\Omega))}\leq Q_5(p),\quad  
 \| F'(\phi_n)\|_{L^\infty(0,T; L^p(\Omega))} \leq  Q_6,
\end{split}
\end{equation} 
where the positive constants $Q_3$, $Q_4(T)$, $Q_5(p)$ and $Q_6$ are independent of $n$. Thanks to these bounds, in a similar way as for the vanishing viscosity limit, we obtain by compactness the existence of two limit functions $\phi \in L^\infty(0,T;W^{2,p}(\Omega))\cap H^1(0,T;H^1(\Omega))$, such that $F'(\phi)\in L^\infty(0,T; L^p(\Omega))$ with $p$ as above, and 
$\mu \in L^\infty(0,T;H^1(\Omega))\cap L^2(0,T;H^3(\Omega))$,
which solve \eqref{CH} almost everywhere on $\Omega \times (0,T)$ and \eqref{bcic} almost everywhere on $\partial \Omega \times (0,T)$. The uniqueness of such solution is inferred from \cite[Theorem 6]{Abels2009} (see Theorem \ref{well-pos} and Remark \ref{remark-wellposs}). By the arbitrariness of $T$, we can extend the solution on $(0,\infty)$ and obtain the conclusion as stated in Theorem \ref{CH-strong}. The estimates \eqref{N-mu} and \eqref{N-phit} simply follows by passing to the limit as $n \rightarrow \infty$ in \eqref{E1-n}-\eqref{E2-n} and letting $T\rightarrow \infty$. 

\medskip
Finally, if $\uu \in L^\infty(0, \infty;\L^2_\sigma(\Omega))\cap L^2(0,\infty;\H^1_{0,\sigma}(\Omega))$, we deduce from 
\begin{equation}
\label{phit-est}
\| \partial_t \phi\|_{H^1(\Omega)'}\leq C \left( \|\nabla \mu \|_{L^2(\Omega)}+ \| \uu \|_{L^2(\Omega)} \right),
\end{equation}
that $\partial_t \phi \in L^\infty(0,\infty; H^1(\Omega)')$.
The proof is complete.
\end{proof}

In order to derive a further global estimate for the gradient of the chemical potential in \eqref{CH}, we report the following well-known result for the Neumann problem (see, e.g., \cite{McLean}). 

\begin{lemma}[Neumann problem]
\label{Neumann-prob}
Let $\Omega$ be a bounded domain in $\mathbb{R}^d$, $d=2,3$, with $C^3$ boundary. Given $f \in W^{k,p}(\Omega)$ and $g \in W^{k+1-1/p,p}(\partial \Omega)$, where $k=0,1$ and $p \in (1,\infty)$, such that
\begin{equation}
\label{comp-cond}
\int_\Omega f \, \d x = \int_{\partial \Omega} g \, \d \sigma,  
\end{equation}
there exists a $u \in W^{k+2,p}(\Omega)$ which is unique up to a constant satisfying
\begin{equation}
\begin{split}
\Delta u&=f \quad \text{in } \Omega,\\
\partial_\n u&=g\quad \text{on } \partial \Omega.
\end{split}
\end{equation}
In addition, 
\begin{equation}
\label{est-grad}
\| \nabla u \|_{W^{k+1,p}(\Omega)}\leq C \left( \| f\|_{W^{k,p}(\Omega)}+ \| g\|_{W^{k+1-1/p,p}(\partial \Omega)}\right),
\end{equation}
where the positive constant $C$ only depends on $k$, $p$ and $\Omega$.
\end{lemma}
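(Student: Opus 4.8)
The plan is to split the argument into an existence-and-uniqueness step at the level of weak solutions, followed by an elliptic regularity bootstrap that yields the $W^{k+2,p}$ bound and the estimate \eqref{est-grad}. For the first step I would pass to the weak formulation
\[
\int_\Omega \nabla u \cdot \nabla v \, \d x = -\int_\Omega f v \, \d x + \int_{\partial \Omega} g v \, \d \sigma,
\]
posed on the space of mean-zero $H^1(\Omega)$ functions. The compatibility condition \eqref{comp-cond} ensures that the right-hand side defines a bounded linear functional which annihilates the constants, hence it descends to the quotient $H^1(\Omega)/\R$. Coercivity of the Dirichlet form on mean-zero functions follows from the Poincar\'e--Wirtinger inequality, so the Lax--Milgram theorem produces a unique weak solution $u$, determined up to an additive constant. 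This settles both existence and the uniqueness-up-to-a-constant claim.

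For the second step I would invoke the Agmon--Douglis--Nirenberg (equivalently, Calder\'on--Zygmund) theory for the Laplacian subject to Neumann boundary conditions in order to upgrade the weak solution to $W^{k+2,p}(\Omega)$. The standard route is: (i) interior estimates, obtained by freezing the (already constant) coefficients of $-\Delta$ and applying the Calder\'on--Zygmund inequality for the Newtonian potential; (ii) boundary estimates, obtained by covering $\partial \Omega$ with finitely many local charts, straightening the boundary using its $C^3$ regularity---precisely the smoothness needed to reach $W^{3,p}$ when $k=1$---and treating the model half-space Neumann problem in each flattened half-ball, for instance by an even reflection across the flat boundary that converts it into a whole-space problem to which the Calder\'on--Zygmund bound applies; and (iii) patching the local estimates via a partition of unity subordinate to the cover. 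The trace theorem identifies $W^{k+1-1/p,p}(\partial \Omega)$ as the sharp space in which $\partial_\n u$ lives when $u \in W^{k+2,p}(\Omega)$, which explains the boundary norm appearing on the right-hand side of \eqref{est-grad}.

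The main obstacle is the boundary estimate in step (ii): the curved boundary and the inhomogeneous Neumann datum must be handled simultaneously. Flattening a chart introduces variable lower-order coefficients whose magnitude is controlled by the $C^3$-norm of the chart maps, and these perturbation terms must be absorbed into the principal part by taking the charts sufficiently small before summing over the finite cover; tracking the fractional Sobolev norm of $g$ through the flattening and the reflection is the technically delicate point. Since the statement is classical, I would ultimately cite the corresponding result in \cite{McLean}, where the full argument is carried out via boundary integral operators together with the ADN elliptic estimates.
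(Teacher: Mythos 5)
The paper offers no proof of this lemma at all---it records it as a classical fact and cites McLean's book, which is precisely where your argument also terminates, and your sketch (Lax--Milgram on mean-zero $H^1$, then Agmon--Douglis--Nirenberg/Calder\'on--Zygmund boundary regularity after flattening and lifting the Neumann datum) is the standard route to it. The only loose end is that for $d=3$, $k=0$ and $p$ close to $1$ the functionals $v\mapsto \int_\Omega f v\,\d x$ and $v\mapsto\int_{\partial\Omega}gv\,\d\sigma$ need not be bounded on $H^1(\Omega)$, so the variational step should be run on smoothed data and the general case recovered from the a priori $W^{k+2,p}$ estimate by density (or by duality); this is routine and does not affect the validity of your approach.
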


Using this, we derive the following conclusion for the gradient of the chemical  potential. Beyond its intrinsic interest, it will play a crucial role to control the flux $\widetilde{\J}$ in the momentum equation \eqref{AGG}.

\begin{corollary}
\label{HREG-MU}
Let the assumptions of Theorem \ref{CH-strong}. Then, we have the following estimate for $k=1,2$
 \begin{equation}
\label{high-mu}
\begin{split}
\int_0^\infty \| \nabla \mu(s)\|_{H^k(\Omega)}^2 \, \d s
&\leq  
C \left(  \left\| \nabla \left( -\Delta \phi_{0} +\Psi'(\phi_{0}) \right) \right\|_{L^2(\Omega)}^2+ C \int_0^\infty \| \nabla \uu(s)\|_{L^2(\Omega)}^2 \, \d s\right) \\
&\quad \times \left( 1+\left( \int_0^\infty \| \nabla \uu(s)\|_{L^2(\Omega)}^2 \, \d s \right) \ \mathrm{exp}\left( 2C \int_0^\infty \| \nabla \uu(s)\|_{L^2(\Omega)}^2 \, \d s \right) \right).
\end{split}
\end{equation}
where the positive constant $C$ depends only on $k$, $\Omega$, $\theta$, $\theta_0$, $F$, and $\overline{\phi_0}$.
\end{corollary}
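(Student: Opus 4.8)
The plan is to read the first equation of \eqref{CH} as an inhomogeneous Neumann problem for $\mu$ and then to invoke the elliptic regularity estimate of Lemma \ref{Neumann-prob}. Indeed, the strong solution provided by Theorem \ref{CH-strong} satisfies $\Delta \mu = \partial_t \phi + \uu \cdot \nabla \phi$ almost everywhere in $\Omega$, together with the homogeneous Neumann condition $\partial_\n \mu = 0$ on $\partial \Omega$. Setting $f := \partial_t \phi + \uu \cdot \nabla \phi$ and $g := 0$, the compatibility condition \eqref{comp-cond} holds, since $\int_\Omega \partial_t \phi \, \d x = 0$ by the conservation of mass \eqref{cons-mass} and $\int_\Omega \uu \cdot \nabla \phi \, \d x = - \int_\Omega (\div \uu) \phi \, \d x = 0$ using $\div \uu = 0$ and the no-slip condition. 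Applying Lemma \ref{Neumann-prob} with $k$ replaced by $k-1$ then yields, for almost every $t$ and $k = 1,2$,
\begin{equation*}
\| \nabla \mu \|_{H^k(\Omega)} \leq C \left( \| \partial_t \phi \|_{H^{k-1}(\Omega)} + \| \uu \cdot \nabla \phi \|_{H^{k-1}(\Omega)} \right).
\end{equation*}
It therefore remains to square this inequality, integrate in time, and bound the two resulting contributions by the right-hand side of \eqref{high-mu}.

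For the first contribution, since $\partial_t \phi$ is mean-free, the generalized Poincar\'{e} inequality gives $\| \partial_t \phi \|_{H^{k-1}(\Omega)} \leq C \| \nabla \partial_t \phi \|_{L^2(\Omega)}$ for $k = 1,2$, so that $\int_0^\infty \| \partial_t \phi \|_{H^{k-1}(\Omega)}^2 \, \d s$ is directly controlled by \eqref{N-phit}, which already exhibits exactly the structure appearing in \eqref{high-mu}. For the convective contribution with $k=1$, I would estimate $\| \uu \cdot \nabla \phi \|_{L^2(\Omega)} \leq \| \nabla \phi \|_{L^\infty(\Omega)} \| \uu \|_{L^2(\Omega)} \leq C \| \nabla \phi \|_{L^\infty(\Omega)} \| \nabla \uu \|_{L^2(\Omega)}$, again using Poincar\'{e} for $\uu \in \H^1_{0,\sigma}(\Omega)$. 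For $k=2$, I would reuse the computation carried out in the proof of Theorem \ref{CH-strong}, namely
\begin{equation*}
\| \uu \cdot \nabla \phi \|_{H^1(\Omega)} \leq C \left( \| \nabla \phi \|_{L^\infty(\Omega)} + \| \phi \|_{W^{2,3}(\Omega)} \right) \| \nabla \uu \|_{L^2(\Omega)},
\end{equation*}
where the embedding $\H^1_{0,\sigma}(\Omega) \hookrightarrow \L^6(\Omega)$ in $d=3$ is applied to the factor $\uu$. In both cases the coefficient of $\| \nabla \uu \|_{L^2(\Omega)}$ is a spatial norm of $\phi$ which, by the Sobolev embedding $W^{2,p}(\Omega) \hookrightarrow C^1(\overline{\Omega})$ and the elliptic estimate \eqref{phi2p}, is bounded by $C(1 + \| \nabla \mu \|_{L^2(\Omega)})$ uniformly in time.

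Collecting these estimates, after integrating in time I obtain
\begin{equation*}
\int_0^\infty \| \nabla \mu \|_{H^k(\Omega)}^2 \, \d s \leq C \int_0^\infty \| \nabla \partial_t \phi \|_{L^2(\Omega)}^2 \, \d s + C \left( 1 + \| \nabla \mu \|_{L^\infty(0,\infty;L^2(\Omega))}^2 \right) \int_0^\infty \| \nabla \uu \|_{L^2(\Omega)}^2 \, \d s,
\end{equation*}
and it then suffices to insert the a priori bounds \eqref{N-mu} for $\| \nabla \mu \|_{L^\infty(0,\infty;L^2(\Omega))}$ and \eqref{N-phit} for $\int_0^\infty \| \nabla \partial_t \phi \|_{L^2(\Omega)}^2 \, \d s$. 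Since both are already of the form displayed on the right-hand side of \eqref{high-mu}, absorbing constants yields the claim. The only genuinely technical point is the convective term for $k=2$: one must verify that $\uu \cdot \nabla \phi \in L^2(0,\infty;H^1(\Omega))$ \emph{globally} in time, which hinges on the global-in-time regularity $\phi \in L^\infty(0,\infty;W^{2,p}(\Omega))$ and $\nabla \mu \in L^\infty(0,\infty;L^2(\Omega))$ from Theorem \ref{CH-strong}, together with the finiteness of $\int_0^\infty \| \nabla \uu \|_{L^2(\Omega)}^2 \, \d s$; the uniformly-local bounds recorded in \eqref{REG} alone would not deliver a finite global integral.
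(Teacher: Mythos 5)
Your argument is correct and follows essentially the same route as the paper: read \eqref{CH}$_1$ as a Neumann problem for $\mu$, apply Lemma \ref{Neumann-prob} together with the generalized Poincar\'{e} inequality and \eqref{phi2p} to get $\|\nabla\mu\|_{H^k(\Omega)}\leq C\left(\|\nabla\partial_t\phi\|_{L^2(\Omega)}+\|\nabla\uu\|_{L^2(\Omega)}\left(1+\|\nabla\mu\|_{L^2(\Omega)}\right)\right)$, then integrate in time and insert \eqref{N-mu} and \eqref{N-phit}. The only (immaterial) deviation is your use of $\|\nabla\phi\|_{L^\infty(\Omega)}\|\uu\|_{L^2(\Omega)}$ for the convective term when $k=1$, where the paper uses $\|\uu\|_{L^6(\Omega)}\|\nabla\phi\|_{L^3(\Omega)}$; both are covered by the available bounds on $\phi$.
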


\begin{proof}
Since 
$$
\Delta \mu= \partial_t \phi + \uu\cdot \nabla \phi \ \text{ a.e. in } \Omega \times (0,\infty), \quad \partial_\n \mu= 0 \ \text{ a.e. on } \partial \Omega \times (0,\infty),
$$
and $\int_\Omega {\partial_t \phi + \uu\cdot \nabla \phi}(t) \, \d x=0$ for all $t \in (0,\infty)$, it follows from Lemma \ref{Neumann-prob}, the generalized Poincaré inequality and \eqref{phi2p} that
\begin{equation}
\begin{split}
\| \nabla \mu\|_{H^1(\Omega)}
&\leq C \left( \| \partial_t \phi\|_{L^2(\Omega)}+ \| \uu \cdot \nabla \phi\|_{L^2(\Omega)} \right) \\
& \leq C \left( \| \partial_t \phi\|_{L^2(\Omega)}+ \| \uu\|_{L^6(\Omega)} \| \nabla \phi\|_{L^3(\Omega)}\right)\\
& \leq C \left( \| \nabla \partial_t \phi\|_{L^2(\Omega)}+ \| \nabla \uu\|_{L^2(\Omega)} \| \phi\|_{H^2(\Omega)}\right)\\
& \leq C \left( \| \nabla \partial_t \phi\|_{L^2(\Omega)}+ \| \nabla \uu\|_{L^2(\Omega)} \left( 1+ \| \nabla \mu\|_{L^2(\Omega)} \right) \right).
\end{split}
\end{equation}
Similarly, we find 
\begin{equation}
\begin{split}
\| \nabla \mu\|_{H^2(\Omega)}
&\leq C \left( \| \partial_t \phi\|_{H^1(\Omega)}
+\| \uu \cdot \nabla \phi\|_{H^1(\Omega)} \right) \\
& \leq C \left( \| \partial_t \phi\|_{H^1(\Omega)}
+ \| \nabla \uu\|_{L^2(\Omega)} \left( 1+\| \nabla \mu\|_{L^2(\Omega)}\right)
+\| \nabla (\uu \cdot \nabla \phi )\|_{L^2(\Omega)} \right) 
\\
& \leq C \left( \| \nabla \partial_t \phi\|_{L^2(\Omega)}
+\| \nabla \uu\|_{L^2(\Omega)} \left( 1+\| \nabla \mu\|_{L^2(\Omega)}\right) \right. \\
&\quad \left. + \| \nabla \uu\|_{L^2(\Omega)} \| \nabla \phi\|_{L^\infty(\Omega)}
+ \| \uu\|_{L^6(\Omega)} \| \phi\|_{W^{2,3}(\Omega)} \right)\\
 & \leq C \left( \| \nabla \partial_t \phi\|_{L^2(\Omega)}+
\| \nabla \uu\|_{L^2(\Omega)} \left( 1+ \| \nabla \mu\|_{L^2(\Omega)} \right)\right).
\end{split}
\end{equation}
Thus, in light of \eqref{N-mu} and \eqref{N-phit}, for $k=1,2$, we conclude that 
\begin{equation}
\label{nmu-H1-e}
\begin{split}
\int_0^\infty \| \nabla \mu(s)\|_{H^k(\Omega)}^2 \, \d s
&\leq  
C \left(  \left\| \nabla \left( -\Delta \phi_{0} +\Psi'(\phi_{0}) \right) \right\|_{L^2(\Omega)}^2+ C \int_0^\infty \| \nabla \uu(s)\|_{L^2(\Omega)}^2 \, \d s\right) \\
&\quad \times \left( 1+\left( \int_0^\infty \| \nabla \uu(s)\|_{L^2(\Omega)}^2 \, \d s \right) \ \mathrm{exp}\left( 2C \int_0^\infty \| \nabla \uu(s)\|_{L^2(\Omega)}^2 \, \d s \right) \right).
\end{split}
\end{equation}
\end{proof}

We are now in the position to prove the first part of our main result 
\begin{proof}[Proof of Theorem \ref{MAIN} - (i) Global regularity of the concentration]\label{131}
Let $(\uu,\phi)$ be a global weak solution to \eqref{AGG}-\eqref{AGG-bc} given by Theorem \ref{WEAK-SOL}. Consider an arbitrary positive time $\tau$. Thanks to the energy inequality \eqref{energy-ineq} and the Korn inequality, we infer that
$$
\int_0^\infty \| \nabla \uu(s)\|_{L^2(\Omega)}^2 + \| \nabla \mu(s)\|_{L^2(\Omega)}^2 \, \d s < \infty.
$$
In addition, we have that $\mu \in L_{\uloc}^2([0,\infty); H^1(\Omega))$, $\phi \in L^4_{\uloc}([0,\infty; H^2(\Omega))$ and $\partial_\n \phi=0$ almost everywhere on $\partial \Omega \times (0,\infty)$ (cf. Remark \ref{rem-weak-sol}). Thus, there exists $\tau^\star \in (0,\tau]$ such that
$\phi(\tau^\star) \in H^2(\Omega)$ 
with $\| \phi(\tau^\star)\|_{L^\infty(\Omega)}\leq 1$, 
$|\overline{\phi(\tau^\star)}|<1$, $\mu(\tau^\star)=-\Delta \phi(\tau^\star)+\Psi'(\phi(\tau^\star)) \in H^1(\Omega)$ and
$\partial_\n \phi (\tau^\star)=0$ on $\partial \Omega$.
An application of Theorem \ref{CH-strong} on the interval $[\tau^\star,\infty)$, together with the uniqueness of weak solutions (cf. Remark \ref{remark-wellposs}), gives us the desired conclusion.
\end{proof}

\section{Large Time Behaviour and Strict Separation Property}\label{sec:LargeTimeBehaviour}
\setcounter{equation}{0}
For any $m \in \mathbb{R}$, let us define the function spaces
$$
L_{(m)}^2(\Omega)=\left\lbrace f \in L^2(\Omega): \frac1{|\Omega|} \int_\Omega f(x) \, \d x =m \right\rbrace,
\quad
H^1_{(m)}(\Omega)=  \left\lbrace f \in H^1(\Omega): \frac1{|\Omega|} \int_\Omega f(x) \, \d x =m \right\rbrace.
$$
Notice that $H^1_{(m)}(\Omega)$ is not a linear space if $m\neq 0$. Nevertheless, it can be identified with $H^1_{(0)}(\Omega)$ by simply translation with the constant $m$. Moreover, the tangent space of $H^1_{(m)}(\Omega)$ is $H^1_{(0)}(\Omega)$. Hence, if $G\colon H^1_{(m)}(\Omega)\to \R$ is differentiable, then $DG(f)\colon H^1_{(0)}(\Omega)\to \R$ is linear and bounded, i.e., $DG(f)\in H^{1}_{(0)}(\Omega)'$ for all $f\in H^1_{(m)}(\Omega)$.

We now introduce
\begin{equation*}
 E_0(\varphi) = \int_\Omega  \frac12 |\nabla \varphi|^2 + F(\varphi)  \, \d x
\end{equation*}
for $\varphi \in \dom (E_0)= \lbrace f\in H^1_{(m)}(\Omega): |f(x)|\leq 1 \text{ a.e. in } \Omega \rbrace$, where $F$ is the ``convex part'' of $\Psi$ (cf. \eqref{Log}).
We recall that $E_0$ is convex and lower semi-continuous. Thanks to \cite[Theorem~4.3]{AW2007}, the subgradient $\partial E_0 (\varphi)\in L^2_{(0)}(\Omega)$ for all $\varphi \in \mathcal{D}(\partial E_0)$, where 
\begin{equation*}
\mathcal{D}(\partial E_0) =\left\lbrace
 \varphi\in H^2(\Omega)\cap L^2_{(m)}(\Omega):F'(\varphi)\in L^2(\Omega),  F''(\varphi)|\nabla \varphi|^2 \in L^1(\Omega), \partial_\n \varphi|_{\partial\Omega} =0 
 \right\rbrace
  \end{equation*} 
is given as
$$
 \partial E_0 ( \varphi) = -\Delta \varphi + P_0 F'(\varphi), 
 \quad
 \text{where}
 \quad 
 P_0 f = f- \frac1{|\Omega|}\int_\Omega f(x) \, \d x.
$$
Let us now consider $\phi_\infty\in \mathcal{D}(\partial E_0)$, which solves the stationary Cahn-Hilliard equation 
\begin{alignat}{2}
\label{eq:StatCH1}
 -\Delta \phi + \Psi'(\phi)&=\text{const.} &\qquad&\text{in } \Omega,\\\label{eq:StatCH2}
 \partial_{\n} \phi &=0 & & \text{on } \partial\Omega,\\
 \label{eq:StatCH3}
 \frac{1}{|\Omega|}\int_\Omega \phi(x) \ \d x &= m.
  \end{alignat}
We recall that solutions to \eqref{eq:StatCH1}-\eqref{eq:StatCH3} are critical points of the functional $E_{\text{free}}$ on $H^1_{(m)}(\Omega)$. In addition, they are separated from the pure phases $\pm 1$ as stated in the next result, whose proof can be found in \cite[Proposition 6.1]{AW2007} and in \cite[Lemma A.1]{CG2020}.
\begin{proposition}\label{prop:1}
Let $\phi_\infty\in \mathcal{D}(\partial E_0)$ be a solution to (\ref{eq:StatCH1})-(\ref{eq:StatCH3}). Then, there exist two constants $M_j,\ j=1,2$, such that
\begin{equation}
  \label{eq:boundsOnU}
-1<M_1\le \phi_\infty(x)\le M_2<1\qquad \text{for all}\  x\in\overline{\Omega}. 
\end{equation}
\end{proposition}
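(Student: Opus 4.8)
The plan is to reduce everything to a single $L^\infty$ bound, namely $F'(\phi_\infty)\in L^\infty(\Omega)$. Recall from \eqref{Log} that $F'(s)=\tfrac{\theta}{2}\log\tfrac{1+s}{1-s}$ and $F''(s)=\tfrac{\theta}{1-s^2}\ge \theta>0$, so $F'\colon(-1,1)\to\R$ is a continuous strictly increasing bijection. Consequently, any uniform bound $|F'(\phi_\infty)|\le K$ forces $\phi_\infty$ to take values in the compact subinterval $[(F')^{-1}(-K),(F')^{-1}(K)]\subset(-1,1)$, which is exactly the desired conclusion. As a preliminary normalization I would integrate \eqref{eq:StatCH1} over $\Omega$ and use \eqref{eq:StatCH2} to identify the constant as $\mu_\infty=\overline{F'(\phi_\infty)}-\theta_0\overline{\phi_\infty}\in\R$, which is finite because $F'(\phi_\infty)\in L^2(\Omega)\subset L^1(\Omega)$ for $\phi_\infty\in\mathcal{D}(\partial E_0)$; thus $\phi_\infty$ solves $-\Delta\phi_\infty+F'(\phi_\infty)=\mu_\infty+\theta_0\phi_\infty$ with $\partial_\n\phi_\infty=0$ and $|\phi_\infty|\le 1$ a.e.

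The heart of the argument is a $p$-\emph{independent} $L^p$ estimate, in the spirit of the derivation of \eqref{F'-p} but with $\alpha=0$ and $\mu\equiv\mu_\infty$. Testing the equation with $|F'(\phi_\infty)|^{p-2}F'(\phi_\infty)$ for $p\ge 2$ and integrating by parts, the Neumann condition kills the boundary term and the diffusion contributes the nonnegative quantity $(p-1)\int_\Omega |F'(\phi_\infty)|^{p-2}F''(\phi_\infty)|\nabla\phi_\infty|^2\,\d x\ge 0$, which I simply discard. Applying H\"older's inequality to the right-hand side then gives
\begin{equation*}
\|F'(\phi_\infty)\|_{L^p(\Omega)}^p\le \int_\Omega |\mu_\infty+\theta_0\phi_\infty|\,|F'(\phi_\infty)|^{p-1}\,\d x\le \|\mu_\infty+\theta_0\phi_\infty\|_{L^p(\Omega)}\,\|F'(\phi_\infty)\|_{L^p(\Omega)}^{p-1},
\end{equation*}
so that, using $|\phi_\infty|\le 1$ a.e.,
\begin{equation*}
\|F'(\phi_\infty)\|_{L^p(\Omega)}\le \|\mu_\infty+\theta_0\phi_\infty\|_{L^p(\Omega)}\le (|\mu_\infty|+\theta_0)\,|\Omega|^{1/p}.
\end{equation*}
Since $|\Omega|^{1/p}\to 1$ as $p\to\infty$, letting $p\to\infty$ yields $\|F'(\phi_\infty)\|_{L^\infty(\Omega)}\le |\mu_\infty|+\theta_0=:K$.

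To finish, I would set $M_1=(F')^{-1}(-K)$ and $M_2=(F')^{-1}(K)$, both lying strictly inside $(-1,1)$; the bound $|F'(\phi_\infty)|\le K$ then gives $M_1\le\phi_\infty\le M_2$ a.e. in $\Omega$. To upgrade this to the pointwise statement \eqref{eq:boundsOnU} on all of $\overline\Omega$, observe that the right-hand side of the equation now lies in $L^\infty(\Omega)$, so elliptic regularity for the Neumann problem (Lemma~\ref{Neumann-prob}) places $\phi_\infty\in W^{2,q}(\Omega)$ for every $q<\infty$, hence $\phi_\infty\in C(\overline\Omega)$; the a.e.\ inequalities therefore hold for every $x\in\overline\Omega$.

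I expect the only genuine obstacle to be the rigorous justification of the singular test function $|F'(\phi_\infty)|^{p-2}F'(\phi_\infty)$, which a priori need not belong to $H^1(\Omega)$. The clean remedy is to first test with the truncated nonlinearity $|T_k(F'(\phi_\infty))|^{p-2}T_k(F'(\phi_\infty))$, where $T_k$ is the truncation at height $k$; this is a legitimate $H^1$-test function, it produces the same inequality with $F'$ replaced by its truncation, and the sign of the discarded diffusion term is preserved. One then passes to the limit $k\to\infty$ by monotone convergence on the left and Fatou's lemma on the right. The integrability $F''(\phi_\infty)|\nabla\phi_\infty|^2\in L^1(\Omega)$ built into the definition of $\mathcal{D}(\partial E_0)$ is precisely what makes these manipulations admissible.
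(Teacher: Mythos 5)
Your proof is correct. The paper does not reproduce an argument for this proposition but defers to \cite[Proposition 6.1]{AW2007} and \cite[Lemma A.1]{CG2020}, and your route --- the $p$-uniform $L^p$ bound on $F'(\phi_\infty)$ obtained by testing the stationary equation with (a truncation of) $|F'(\phi_\infty)|^{p-2}F'(\phi_\infty)$, discarding the nonnegative diffusion term, letting $p\to\infty$, inverting $F'$, and upgrading to $\overline{\Omega}$ by elliptic regularity --- is essentially the standard argument used there; the same testing device appears in the paper's own derivation of \eqref{F'-p}, where the point is precisely that for constant chemical potential the bound becomes independent of $p$.
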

 
The first important step towards the longtime stabilization of weak solutions to \eqref{AGG}-\eqref{AGG-bc} is to show
\begin{lemma}\label{lem:ConvU}
Let $(\uu,\phi)$ be a weak solution to \eqref{AGG}-\eqref{AGG-bc} in the sense of Theorem \ref{WEAK-SOL}. Then $\uu(t)\to {\bf 0}$ in $\L^2(\Omega)$ as $t\to\infty$.  
\end{lemma}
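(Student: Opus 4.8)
The plan is to extract the decay of $\uu(t)$ from the global energy dissipation inequality \eqref{energy-ineq}. First I would observe that the energy $E(\uu(t),\phi(t))$ is bounded below: indeed $\Psi$ is bounded on $[-1,1]$ (see \eqref{Log}) and $\rho(\phi)\geq \rho_\ast>0$, so $E_{\mathrm{kin}}\geq 0$ and $E_{\mathrm{free}}(\phi)\geq -C|\Omega|$ for a constant $C$ depending only on $\theta,\theta_0$. Since \eqref{energy-ineq} shows $t\mapsto E(\uu(t),\phi(t))$ is non-increasing (for the relevant $t$ and almost all starting times $s$) and bounded below, it has a finite limit $E_\infty$ as $t\to\infty$. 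Taking $s=0$ in \eqref{energy-ineq} and letting $t\to\infty$ yields the crucial global dissipation bound
\begin{equation*}
\int_0^\infty \left\| \sqrt{\nu(\phi(\tau))} D \uu(\tau) \right\|_{L^2(\Omega)}^2 + \left\| \nabla \mu(\tau) \right\|_{L^2(\Omega)}^2 \, \d \tau <\infty,
\end{equation*}
using $m\equiv 1$. In particular, since $\nu(\phi)\geq \nu_\ast>0$ and by the Korn inequality on $\H^1_{0,\sigma}(\Omega)$, we obtain $\uu\in L^2(0,\infty;\H^1_{0,\sigma}(\Omega))$, hence $\|\nabla\uu(\tau)\|_{L^2(\Omega)}^2$ is integrable on $(0,\infty)$.

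Integrability of $\|\nabla\uu\|_{L^2}^2$ alone does not give pointwise decay, so the second step is to control the time derivative in a suitably weak sense and combine it with the $L^2$-in-time bound. The natural route is to test the weak momentum equation \eqref{uu-weak} (in its distributional form) against $\uu$ itself and track the evolution of the kinetic energy $\tfrac12(\rho(\phi)\uu,\uu)$, or, more robustly, to argue via uniform continuity. Concretely, I would show that $t\mapsto E_{\mathrm{kin}}(\uu(t),\phi(t))$ (equivalently $\|\sqrt{\rho(\phi)}\,\uu(t)\|_{L^2}^2$) converges. Since $E(\uu(t),\phi(t))\to E_\infty$ and $E_{\mathrm{free}}(\phi(t))$ converges as well — the latter because $\phi(t)$ is relatively compact in $H^1$ by the regularity from Theorem \ref{CH-strong}/part (i) and the free energy is continuous — the kinetic energy must converge to some limit $\ell\geq 0$. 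Finally, from $\uu\in L^2(0,\infty;\L^2_\sigma(\Omega))$ (a consequence of $\uu\in L^2(0,\infty;\H^1_{0,\sigma})$ via Poincaré) we have $\int_0^\infty\|\uu(\tau)\|_{L^2}^2\,\d\tau<\infty$, which forces $\ell=0$, hence $\|\sqrt{\rho(\phi)}\,\uu(t)\|_{L^2}\to 0$; since $\rho(\phi)\geq\rho_\ast>0$ this gives $\uu(t)\to\mathbf 0$ in $\L^2(\Omega)$.

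The main obstacle is justifying the convergence of the kinetic energy rigorously, because the weak solution is not known to be smooth in $\uu$ and uniqueness is unavailable. The clean way around this is to prove that $t\mapsto \|\sqrt{\rho(\phi(t))}\,\uu(t)\|_{L^2}^2$ is uniformly continuous on $[\tau,\infty)$: combined with its $L^1$-integrability in time (which follows from $\uu\in L^2(0,\infty;\L^2)$ and the boundedness of $\rho$), uniform continuity of a nonnegative integrable function forces it to tend to $0$ as $t\to\infty$ (a standard Barbălat-type argument). To get uniform continuity I would estimate the distributional time derivative of this quantity using the weak momentum equation \eqref{uu-weak}: the terms $(\div(\rho\uu\otimes\uu),\uu)$, $((\uu\otimes\widetilde\J),\nabla\uu)$, $(\nu(\phi)D\uu,D\uu)$ and $(\mu\nabla\phi,\uu)$ are each controlled on $[\tau,\infty)$ using the global regularity of $\phi$ and $\mu$ from Theorem \ref{CH-strong} (in particular $\nabla\mu\in L^2(0,\infty;L^2)$, $\phi\in L^\infty(\tau,\infty;W^{2,p})$ and the separation property, which bound $\widetilde\J$ and $\mu\nabla\phi$), together with $\uu\in L^2(0,\infty;\H^1_{0,\sigma})$ and the $L^\infty_t L^2_x$ bound on $\uu$. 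These estimates place $\tfrac{\d}{\d t}\|\sqrt{\rho(\phi)}\,\uu\|_{L^2}^2$ in $L^1_{\uloc}([\tau,\infty))$ with uniform bound, yielding the required uniform continuity and closing the argument. The delicate point is handling the convective and flux terms with only the available regularity, where the improved bounds on $\phi$ and $\mu$ from Section~\ref{sec:CH} are essential.
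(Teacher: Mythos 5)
Your first route --- write $E_{\text{kin}} = E - E_{\text{free}}$, use the energy inequality \eqref{energy-ineq} to see that $E(\uu(t),\phi(t))$ is non-increasing (along a full-measure set of times) and the free energy identity \eqref{EI} to see that $E_{\text{free}}(\phi(t))$ converges (since $\uu\cdot\nabla\mu\,\phi\in L^1(\Omega\times(0,\infty))$), and then force the limit of the kinetic energy to be $0$ via $\uu\in L^2(0,\infty;\L^2(\Omega))$ --- is sound and is, in qualitative form, exactly the mechanism of the paper's proof: there one subtracts \eqref{EI} from \eqref{energy-ineq} to obtain a kinetic-energy inequality \eqref{Kin-en-eq} valid from almost every starting time, then chooses a starting time $T$ at which both $\|\uu(T)\|_{L^2(\Omega)}$ and $\|\nabla\mu\|_{L^2(\Omega\times(T,\infty))}$ are below a given $\eps$ (possible because both quantities are square-integrable in time), and reads off $\sup_{t\geq T}\|\uu(t)\|_{L^2(\Omega)}^2\leq C\eps$ directly. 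You did not need to abandon this route: the convergence of $E_{\text{free}}(\phi(t))$ is not an obstacle but is handed to you by Theorem \ref{well-pos} together with the global bounds $\nabla\uu,\nabla\mu\in L^2(\Omega\times(0,\infty))$ and $\|\phi\|_{L^\infty}\leq 1$; no compactness of the orbit is needed.

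The route you actually commit to, however, contains a genuine gap. To obtain uniform continuity of $t\mapsto\|\sqrt{\rho(\phi(t))}\,\uu(t)\|_{L^2(\Omega)}^2$ you propose to test the weak momentum equation \eqref{uu-weak} with $\uu$ itself and bound the resulting time derivative. For a weak solution in three dimensions this is precisely the step that is not available: $\uu$ is not an admissible test function, the convective term $\left(\div(\rho\uu\otimes\uu),\uu\right)$ is not controlled by $\uu\in L^\infty(0,\infty;\L^2_\sigma(\Omega))\cap L^2(0,\infty;\H^1_{0,\sigma}(\Omega))$, and carrying this out would amount to proving the energy \emph{identity}, which the paper obtains only under the extra regularity $\uu\in L^2(0,T;\H^2(\Omega))$ (Remark \ref{rem:EnergyIdentity}); for weak solutions only the \emph{inequality} \eqref{energy-ineq} is known. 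Two further problems: a uniform $L^1_{\uloc}$ bound on the derivative does not yield uniform continuity (one needs equi-integrability, e.g.\ a uniform $L^p_{\uloc}$ bound with $p>1$ or a global $L^1$ bound); and invoking the separation property to control $\widetilde{\J}$ and $\mu\nabla\phi$ would be circular, since the separation property (Theorem \ref{MAIN}\,(ii)) is itself proved using the present lemma.
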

\begin{proof}
  First of all, we have by \eqref{energy-ineq} that the energy inequality for the full system
  \begin{equation}
\label{eq:EnergyInequality}
E(\uu(t), \phi(t)) +\int_\tau^t \int_{\Omega} \nu(\phi(s)) | D \uu(s)|^2 \, \d x \d s+ 
\int_\tau^t \int_{\Omega} |\nabla \mu(s)|^2 \, \d x \d s \leq E(\uu(\tau), \phi(\tau))
\end{equation}
holds for almost every $\tau\geq 0$ including $\tau=0$ and every $t\geq \tau$.
Subtracting the energy equality \eqref{EI} for the Cahn-Hilliard equation with divergence-free drift, we obtain
\begin{equation}
\label{Kin-en-eq} 
E_{\text{kin}}(\uu(t)) +\int_\tau^t \int_{\Omega} \nu(\phi(s)) | D \uu(s)|^2 \, \d x \d s 
 \leq E_{\text{kin}}(\uu(\tau)) -\int_{\tau}^t \int_\Omega \uu(s)\cdot  \nabla \mu(s) \, \phi(s) \, \d x \d s
\end{equation}
for almost every $\tau\geq 0$ including $\tau=0$ and every $t\geq \tau$.
Now, let $\eps>0$ be arbitrary. Since $\nabla \mu \in L^2(\Omega\times (0,\infty))$ by the energy estimate, there is some $T'>0$ such that $\|\nabla \mu\|_{L^2(\Omega\times (T,\infty))} \leq \eps$ for all $T\geq T'$. Moreover, since $\uu \in L^2(0,\infty;\H_{0,\sigma}^1(\Omega))$, the set of all $T\geq T'$ such that $\|\uu(T)\|_{L^2(\Omega)}\leq \eps$ has positive measure. Hence there is some $T\geq T'$ such that $\|\nabla \mu\|_{L^2(\Omega\times (T,\infty))}\leq \eps$ and $\|\uu(T)\|_{L^2(\Omega)}\leq \eps$ hold true. Because of \eqref{Kin-en-eq}, and exploiting the $L^\infty$ bound of $\phi$ (cf. \eqref{Weak-reg}), we deduce that
\begin{equation*}
\begin{split}
&\sup_{T\leq t <\infty} \int_\Omega \rho(\phi(t)) \frac{|\uu(t)|^2}2 \, \d x  +\int_T^\infty\int_\Omega \nu(\phi)|D\uu|^2\, \d x \, \d t \\
&\quad \leq \|\nabla \mu\|_{L^2(T,\infty;L^2(\Omega))} \|\uu\|_{L^2(T,\infty;L^2(\Omega))}+ \int_\Omega \rho(\phi(T))\frac{|\uu(T)|^2}2 \, \d x  
\end{split}
  \end{equation*}
for almost every $T>0$. 
Therefore, by the Cauchy-Schwarz, Korn  and Young inequalities we obtain
\begin{equation*}
  \|\uu\|_{L^\infty(T,\infty; L^2(\Omega))}^2 + \|\uu\|_{L^2(T,\infty;H^1(\Omega))}^2 \leq C\left(\|\nabla \mu \|_{L^2(T,\infty;L^2(\Omega))}^2+ \|\uu (T) \|_{L^2(\Omega)}^2\right)\leq 2C\eps,
  \end{equation*}
  where $C$ is independent of $T$ and $\eps$. This shows the claim.
\end{proof}
{We highlight that the proof of Lemma \ref{lem:ConvU} consists of a direct argument which only relies on the validity of the total energy inequality for (weak) solutions of \eqref{AGG} and the free energy equality for solutions to the Cahn-Hilliard equation with drift. In comparison with \cite[Lemma 11]{Abels2009}, no square-summability of the time derivative of $\uu$ is requested.}



Next, we proceed with the proof of the second part of our main result.

\begin{proof}[Proof of Theorem \ref{MAIN} - (ii) Separation property] 
Let us recall from the first part of the proof (cf. \eqref{REG-CONC}) that $\phi \in L^\infty(\tau,\infty;W^{2,p}(\Omega))$ for any $\tau>0$, and for arbitrary $2\leq p<\infty$ if $d=2$ and $p=6$ if $d=3$.
Now we define the $\omega$-limit set of $(\uu,\phi)$ as 
\begin{align*}
\omega(\uu,\phi) &= \left\lbrace (\uu',\phi')\in \L^2_\sigma(\Omega)\times W^{2-\eps ,p}(\Omega): \right.
\\
& \qquad 
\left. \exists \, t_n \nearrow\infty\ \text{ such that }\ (\uu(t_n),\phi(t_n))\to (\uu',\phi')\ \text{in}\  \L^2(\Omega)\times W^{2-\eps,p}(\Omega) \right\rbrace,
\end{align*}
where $\eps>0$. Since $\phi\in BUC([\tau,\infty); W^{2-\eps' ,p}(\Omega))$ for any $\eps'\in (0,\eps)$ and $\tau>0$ by interpolation, and $\uu(t)\to_{t\to\infty} \mathbf{0}$ by Lemma \ref{lem:ConvU}, we easily obtain that $\omega(\uu,\phi)$ is a non-empty, compact, and connected subset of $\L^2_\sigma(\Omega)\times W^{2-\eps ,p}(\Omega))$ {(cf. \cite[Definition 1.4.1 and Theorem 1.4.7]{C2015})}. In addition, owing to the fact that $E$ is a strict Lyapunov functional for \eqref{AGG} {and following \cite[Lemma 11]{Abels2009},} we are able to prove:
\begin{lemma}\label{lem:OmegaLimit}
Let $(\uu,\phi)$ be a weak solution to \eqref{AGG}-\eqref{AGG-bc} in the sense of Theorem \ref{WEAK-SOL}. Then, we have
\begin{equation*}
\omega(\uu,\phi)\subseteq \left\{ ({\bf 0},\phi'): \phi'\in W^{2,p}(\Omega)\cap H^1_{(m)}(\Omega)\
  \text{solves (\ref{eq:StatCH1})-(\ref{eq:StatCH3})}\right\},
  \quad \text{where} \quad m = \overline{\phi_0}.
  \end{equation*}
\end{lemma}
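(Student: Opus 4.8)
The plan is to characterize the $\omega$-limit set by exploiting that the total energy $E$ is a strict Lyapunov functional for the system, combined with the convergence $\uu(t)\to \mathbf 0$ already established in Lemma~\ref{lem:ConvU} and the large-time regularity of $\phi$ from part~(i) of Theorem~\ref{MAIN}. Concretely, fix $(\uu',\phi')\in\omega(\uu,\phi)$ and a sequence $t_n\nearrow\infty$ realizing it. First I would show that $\uu'=\mathbf 0$, which is immediate from Lemma~\ref{lem:ConvU} since $\uu(t_n)\to\mathbf 0$ in $\L^2(\Omega)$ and the limit in $\omega(\uu,\phi)$ is taken in $\L^2_\sigma(\Omega)\times W^{2-\eps,p}(\Omega)$. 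The mass constraint $\overline{\phi'}=\overline{\phi_0}=m$ follows from the conservation of mass for the Cahn--Hilliard equation (integrate \eqref{phi-weak}) together with the continuity of the mass functional under the $W^{2-\eps,p}$ convergence.

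The core of the argument is to show that $\phi'$ solves the stationary equation \eqref{eq:StatCH1}--\eqref{eq:StatCH3}. The standard route is to prove that the dissipation vanishes asymptotically: from the energy inequality \eqref{eq:EnergyInequality} and the boundedness of $E$ from below, $E(\uu(t),\phi(t))$ is nonincreasing and converges to some limit $E_\infty$ as $t\to\infty$, hence
\begin{equation*}
\int_0^\infty \int_\Omega \nu(\phi)|D\uu|^2 + |\nabla\mu|^2 \, \d x \, \d t < \infty .
\end{equation*}
Consequently $\int_{t_n}^{t_n+1}\|\nabla\mu(s)\|_{L^2(\Omega)}^2\,\d s\to 0$. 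Combined with the uniform bound $\mu\in L^\infty(\tau,\infty;H^1(\Omega))$ from \eqref{REG-CONC} and the generalized Poincar\'e inequality, this lets me select (after translating the time origin to $t_n$ and passing to a further subsequence) times at which $\nabla\mu\to\mathbf 0$ in $L^2(\Omega)$, so that the chemical potential converges to a spatial constant $\mu_\infty$. The natural way to make this rigorous is to introduce the time-shifted functions $\phi^n(\cdot,s)=\phi(\cdot,t_n+s)$ and $\uu^n(\cdot,s)=\uu(\cdot,t_n+s)$ on a fixed window $s\in[0,1]$, use the uniform-in-$n$ bounds from \eqref{REG-CONC} (namely $\phi^n$ bounded in $L^\infty(0,1;W^{2,p})$, $\partial_t\phi^n$ in $L^2(0,1;H^1)$, and $F'(\phi^n)$ in $L^\infty(0,1;L^p)$) to extract limits via Aubin--Lions, and pass to the limit in the weak formulation \eqref{e2} of the Cahn--Hilliard equation. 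In the limit the drift term $(\uu^n\cdot\nabla\phi^n,v)\to 0$ because $\uu(t)\to\mathbf 0$, and $\partial_t\phi^n\to 0$ because of the summable dissipation, leaving $(\nabla\mu_\infty,\nabla v)=0$, i.e. $\mu_\infty$ is constant and $-\Delta\phi' + \Psi'(\phi') = \mu_\infty$ a.e., with $\partial_\n\phi'=0$ inherited from the boundary regularity in Remark~\ref{rem-weak-sol}. Passing to the limit in the logarithmic term $F'(\phi^n)\rightharpoonup F'(\phi')$ is handled exactly as in the proof of Theorem~\ref{CH-strong}, using the pointwise a.e.\ convergence from strong $W^{1,q}$ convergence together with Fatou's lemma to confirm $|\phi'|<1$ a.e.

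The step I expect to be the main obstacle is the rigorous control of the limit of the nonlinear term $F'(\phi^n)$ and the identification that $\phi'$ lies in $\mathcal D(\partial E_0)$ with $|\phi'|<1$, so that $\phi'$ is genuinely a strong stationary solution rather than a mere distributional one; this is where the separation properties of stationary solutions (Proposition~\ref{prop:1}) and the $L^p$ bounds on $F'(\phi)$ from \eqref{REG-CONC} are essential. A secondary subtlety is ensuring that the convergence times for the velocity, the vanishing of $\nabla\mu$, and the $W^{2-\eps,p}$ convergence of $\phi(t_n)$ can all be arranged along one common subsequence; this is resolved by the uniform continuity $\phi\in BUC([\tau,\infty);W^{2-\eps',p}(\Omega))$ noted above, which guarantees the time-shifted window limits agree with the prescribed $\omega$-limit point $\phi'$. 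Once these are in place, the inclusion stated in Lemma~\ref{lem:OmegaLimit} follows.
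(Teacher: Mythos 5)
Your proposal is correct and follows essentially the same strategy as the paper: translate the solution in time along $t_n$, use the uniform bounds from part (i) together with Aubin--Lions to extract a limit of the shifted trajectories, and show that this limit is time-independent and solves the stationary equation, handling $F'$ via a.e.\ convergence and Fatou. The only (valid) deviation is in how $\nabla\mu=0$ is obtained in the limit: the paper shows $E_{\text{free}}$ is constant along the limiting Cahn--Hilliard trajectory with $\uu=\mathbf{0}$ and invokes its energy identity, whereas you read it off directly from $\int_0^\infty\|\nabla\mu(t)\|_{L^2(\Omega)}^2\,\d t<\infty$, which forces $\nabla\mu(\cdot+t_n)\to 0$ strongly on the shifted windows.
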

\begin{proof}
Thanks to \eqref{EI}, we recall that
  \begin{equation}
\label{eq:EnergyInequality-CH}
E_{\text{free}}(\phi(t)) + 
\int_0^t \int_{\Omega} |\nabla \mu|^2 \, \d x \d s = E_{\text{free}}(\phi_0) - \int_0^t \int_\Omega \uu\cdot \nabla \mu \, \phi\,\d x  \d s, \quad \forall \, t \geq 0.
\end{equation}
Since $\uu\cdot \nabla \mu \, \phi\in L^1(\Omega\times (0,\infty))$, the limit $\lim_{t\to \infty} E_{\text{free}}(\phi(t))$ exists. In addition, owing to $\uu (t)\to_{t\to\infty} \mathbf{0}$ in $\L^2_\sigma (\Omega)$, $\lim_{t\to\infty} E_{\text{kin}}(\uu(t))=0$, and thereby the limit
$ E_\infty:= \lim_{t\to \infty} E(\uu(t), \phi(t))$ exists. 

Let us consider a sequence $\lbrace t_n \rbrace_{n\in\N}\subset \mathbb{R}$ such that $0\leq t_n\nearrow\infty$. We set $\lim_{n\to \infty} (\uu(t_n),\phi(t_n))= (\uu'_0,\phi'_0)$. By Lemma~\ref{lem:ConvU}, it clearly follows that $\uu'_0={\bf 0}$. Now define $(\uu_n(t),\phi_n(t)):=(\uu(t+t_n),\phi(t+t_n))$ for $t\in[0,\infty)$.
Due to \cite[Theorem~6]{Abels2009}, $\lbrace \phi_n \rbrace_{n\in\N}$ converges weakly in $L^2(0,T;W^2_p(\Omega))\cap H^1(0,T;H^{1}(\Omega)')$, for every $T>0$, to a limit function $\phi'$, which is a weak solution (as defined in Theorem \ref{well-pos}) to \eqref{CH}-\eqref{bcic} with $\uu={\bf 0}$, chemical potential $\mu'$ and initial value $\phi'|_{t=0}= \phi_0'$. In particular, $\phi_n\to_{n\to \infty} \phi'$ in $L^2(0,T;H^1(\Omega))$ for every $T>0$. Hence, there is a subsequence such that
\begin{equation*}
 E_{\text{free}}(\phi_n(t))\to_{n\to \infty} E_{\text{free}}(\phi'(t))\quad \text{for a.e.}\ t\in [0,\infty). 
\end{equation*}
On the other hand, 
$
\lim_{n\to \infty}E(\uu_n(t),\phi_n(t))= \lim_{n\to \infty}E_{\text{free}}(\phi_n(t))=  E_\infty
$ since $\uu_n(t)\to_{n\to\infty} {\bf 0}$ in $\L^2(\Omega)$. 
Thus, $E_{\text{free}}(\phi'(t))=E_\infty$ for a.e.\ $t\in [0,\infty)$. 
As a consequence, by the energy identity for the convective Cahn-Hilliard system \eqref{EI} with $\uu\equiv {\bf 0}$, we deduce that $\nabla \mu'(t)=0$ for almost all $t\in [0,\infty)$. Therefore, $\partial_t \phi'(t)=0$, and $\phi'(t)\equiv \phi_0'$ solves the stationary Cahn-Hilliard equation \eqref{eq:StatCH1}-\eqref{eq:StatCH3} with $m=\overline{\phi_0}$.
\end{proof}

Finally, we are in the position to show the desired separation property  \eqref{separation}. 
In fact, thanks to the above characterization of $\omega(\uu,\phi)$, Proposition~\ref{prop:1}, the embedding $W^{2-\eps,p}(\Omega)\hookrightarrow C(\overline{\Omega})$ for $\eps>0$ sufficiently small, and the compactness of $\omega(\uu,\phi)$ in $\L_\sigma^2(\Omega)\times W^{2-\eps,p}(\Omega)$, there exists some $\delta'>0$ such that 
\begin{equation*}
 |\phi'(x)|\leq 1-\delta' \quad \forall \, x\in \overline{\Omega}, \ \text{ for any }  (\mathbf{0},\phi')\in \omega(\uu,\phi).
\end{equation*}
Observing that $\lim_{t\to\infty}\dist((\uu(t),\phi(t)),\omega(\uu,\phi))=0$ in the norm of $\L^2_\sigma(\Omega)\times W^{2-\eps,p}(\Omega)$, we conclude that, for every $\delta\in (0,\delta')$, there is some $T_{SP}>0$ such that
\begin{equation*}
 |\phi(x,t)|\leq 1-\delta \quad \forall \, x\in \overline{\Omega}, \, t\geq T_{SP}.
  \end{equation*}
\end{proof}

\section{Weak-strong uniqueness result}
\label{weak-strong-U}
\setcounter{equation}{0}

In this section we demonstrate a {\it weak-strong} uniqueness result for the system \eqref{AGG}-\eqref{AGG-bc} in both two and three dimensions. This will be essential to achieve the large time regularity of the velocity of each weak solution. {Due to the presence of the non-constant density, our approach is inspired by \cite{FN2012}. Moreover, the separation property plays a crucial role in our argument.} 

\begin{theorem}
\label{weak-strong}
Let $\Omega$ be a bounded domain in $\mathbb{R}^d$, $d=2,3$, of class $C^4$. Assume that $\uu_0 \in \H_{0,\sigma}^1(\Omega)$, $\phi_0 \in H^2(\Omega)$ be such that 
$\| \phi_0\|_{L^\infty(\Omega)}< 1$, $|\overline{\phi_0}|<1$, $\mu_0=-\Delta \phi_0+\Psi'(\phi_0) \in H^1(\Omega)$ and 
$\partial_\n \phi_0=0$ on $\partial \Omega$. 
In addition, we suppose that:
\begin{itemize}
\item $(\uu, \phi)$ is a solution on $[0,T]$ such that
\begin{equation}
\label{REG-weakS}
\begin{split}
&\uu \in BC_{\mathrm{w}}([0,T]; \L_\sigma^2(\Omega))\cap L^2(0,T;\H_{0,\sigma}^1(\Omega)),\\
&\phi \in L^\infty(0,T;H^3(\Omega))\cap L^2(0,T;H^4(\Omega)), \quad \partial_t \phi \in L^2(0,T;H^1(\Omega)),\\
&\phi \in C( \overline{\Omega}\times [0,T]),\quad\text{with}\quad \max_{t \in [0,T]} \|\phi(t)\|_{C(\overline{\Omega})}<1, \\
&\mu \in L^{\infty}(0,T; H^1(\Omega))\cap L^2(0,T;H^3(\Omega)),
\end{split}
\end{equation}
which satisfies
\begin{equation}
\label{weak-form}
\begin{split}
&\int_{\Omega} \rho(\phi(t)) \uu(t)\cdot \ww(t) \, \d x - \int_{\Omega} \rho(\phi_0) \uu_0 \cdot \ww(0) \, \d x- \int_0^t (\rho(\phi) \uu, \partial_t \ww) \, \d \tau 
-\int_0^t (\rho(\phi) \uu \otimes \uu, \nabla \ww) \, \d \tau 
\\ 
& \quad 
+ \int_0^t (\nu(\phi) D \uu, D \ww) \, \d \tau - \int_0^t (\uu \otimes \widetilde{\J}, \nabla \ww) \, \d \tau = \int_0^t (\mu \nabla \phi, \ww) \, \d \tau,
\end{split}
\end{equation}
where $\widetilde{\J}= -\rho'(\phi) \nabla \mu$, for any $t \in (0,T)$, for all $\ww \in W^{1,2}(0,T;\L_\sigma^2(\Omega))\cap L^4(0,T; \H^1_{0,\sigma}(\Omega))$, and 
\begin{equation}
\label{CH-weakS}
\partial_t \phi+ \uu \cdot \nabla \phi = \Delta \mu, \quad \mu= -\Delta \phi+ \Psi'(\phi)
\quad \text{a.e. in } \Omega\times (0,T).
\end{equation}
Moreover, $(\uu(0),\phi(0))=(\uu_0,\phi_0)$ 
and the energy inequality holds
 \begin{equation}
\label{E-ineq}
E(\uu(t), \phi(t)) +\int_0^t \int_{\Omega} \nu(\phi) | D \uu|^2 \, \d x \d \tau+ 
\int_0^t \int_{\Omega} |\nabla \mu|^2 \, \d x \d \tau \leq E(\uu_0, \phi_0)
\end{equation}
for any $t \in [0,T]$.
\smallskip

\item  $(\UU, \Pi, \Phi)$ is a strong solution on $[0,T]$ such that
\begin{equation}
\label{REG-strong}
\begin{split}
&\UU \in C([0,T]; \H^1_{0,\sigma}(\Omega))\cap L^2(0,T;\H^2(\Omega))\cap W^{1,2}(0,T;\L^2_\sigma(\Omega)),\quad \Pi \in L^2(0,T;H^1(\Omega)), \\
&\Phi \in L^\infty(0,T;H^3(\Omega))\cap L^2(0,T;H^4(\Omega)), \quad \partial_t \Phi \in L^2(0,T;H^1(\Omega)),\\
&\Phi \in C( \overline{\Omega}\times [0,T]),\quad\text{with}\quad \max_{t \in [0,T]} \|\Phi(t)\|_{C(\overline{\Omega})}<1, \\
&M \in L^{\infty}(0,T; H^1(\Omega))\cap L^2(0,T;H^3(\Omega)),
\end{split}
\end{equation}
which satisfies
\begin{equation}
\label{strong-form}
\begin{split}
\rho(\Phi) \partial_t \UU + \rho(\Phi) (\UU \cdot \nabla)\UU
+( \J^\star \cdot \nabla) \UU - \div (\nu(\Phi)D\UU) + \nabla \Pi= M \nabla \Phi \quad \text{a.e. in } \Omega\times (0,T),
\end{split}
\end{equation}
where $\J^\star= -\rho'(\Phi) \nabla M$, and 
\begin{equation}
\label{CH-strongS}
\partial_t \Phi+ \UU \cdot \nabla \Phi = \Delta M, \quad M= -\Delta \Phi+ \Psi'(\Phi)
\quad \text{a.e. in } \Omega\times (0,T),
\end{equation}
as well as $(\UU(0),\Phi(0))=(\uu_0,\phi_0)$. 
\end{itemize}
Then, $\uu=\UU$ and $\phi=\Phi$ on $[0,T]$.
\end{theorem}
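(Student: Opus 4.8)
The plan is to run a relative-energy (weak--strong) argument: test the weak formulation against the strong solution and close a Gronwall estimate, exploiting the strict separation and the higher regularity of $(\UU,\Pi,\Phi,M)$. Set $\vv=\uu-\UU$, $\psi=\phi-\Phi$ and $\eta=\mu-M$, so that $\vv(0)=\mathbf{0}$ and $\psi(0)=0$. The quantity to control is the relative energy
\begin{equation*}
\mathcal{E}(t)=\int_\Omega \tfrac12\rho(\phi)\,|\vv|^2\,\d x+\tfrac12\int_\Omega |\nabla\psi|^2\,\d x+\int_\Omega\big(F(\phi)-F(\Phi)-F'(\Phi)\,\psi\big)\,\d x,
\end{equation*}
which is nonnegative (the last term by convexity of $F$) and, since $\overline{\psi}=0$ by mass conservation and $F''$ is bounded on the common range of $\phi,\Phi$ by the separation property \eqref{separation}, is comparable to $\int_\Omega \tfrac12\rho(\phi)|\vv|^2+\tfrac12|\nabla\psi|^2\,\d x$ (Poincar\'e then controls $\|\psi\|_{L^2(\Omega)}$). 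The workhorse is the structural mass balance $\partial_t\rho(\phi)+\div(\rho(\phi)\uu+\widetilde{\J})=0$, which follows from \eqref{CH-weakS}, $\div\uu=0$ and $\rho'\equiv\mathrm{const}$, together with its analogue $\partial_t\rho(\Phi)+\div(\rho(\Phi)\UU+\J^\star)=0$ for the strong solution.

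For the kinetic part I would use the weak momentum balance \eqref{weak-form} with the test function $\ww=\UU$, admissible by \eqref{REG-strong} since $\UU\in W^{1,2}(0,T;\L^2_\sigma(\Omega))\cap C([0,T];\H^1_{0,\sigma}(\Omega))\subset W^{1,2}(0,T;\L^2_\sigma(\Omega))\cap L^4(0,T;\H^1_{0,\sigma}(\Omega))$, together with the energy inequality \eqref{E-ineq} for $(\uu,\phi)$ and the energy identity for $(\UU,\Pi,\Phi)$ that its regularity permits. Expanding $\tfrac12\rho(\phi)|\vv|^2=\tfrac12\rho(\phi)|\uu|^2-\rho(\phi)\uu\cdot\UU+\tfrac12\rho(\phi)|\UU|^2$, the contributions of $\int\rho(\phi)\uu\cdot\UU$ and $\int\tfrac12\rho(\phi)|\UU|^2$ are rewritten by inserting $\partial_t\rho(\phi)=-\div(\rho(\phi)\uu+\widetilde{\J})$ and the strong equation \eqref{strong-form}. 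After integration by parts this generates the dissipation $\int\nu(\phi)|D\vv|^2$, convective remainders such as $\int\rho(\phi)(\vv\cdot\nabla)\UU\cdot\vv$, flux remainders involving $\widetilde{\J}$ and $\J^\star$, the capillary forcing $\int(\mu\nabla\phi-M\nabla\Phi)\cdot\vv$, and coefficient-difference terms $\int(\nu(\phi)-\nu(\Phi))D\UU:D\vv$ and $\int(\rho(\phi)-\rho(\Phi))\partial_t\UU\cdot\vv$. Since $\nu,\rho$ are affine in $\phi$, the latter equal $\nu'\int\psi\,D\UU:D\vv$ and $\rho'\int\psi\,\partial_t\UU\cdot\vv$, hence are bounded by $\|\psi\|_{L^6(\Omega)}$ times the strong-solution norms $\|\UU\|_{H^2(\Omega)}$, $\|\partial_t\UU\|_{L^2(\Omega)}$, and so by $\tfrac{\nu_\ast}{4}\|D\vv\|_{L^2(\Omega)}^2+C(t)\,\mathcal{E}(t)$ with $C\in L^1(0,T)$.

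For the Cahn--Hilliard part I would subtract \eqref{CH-weakS} and \eqref{CH-strongS}, obtaining $\partial_t\psi+\vv\cdot\nabla\phi+\UU\cdot\nabla\psi=\Delta\eta$ with $\eta=-\Delta\psi+\Psi'(\phi)-\Psi'(\Phi)$, and test with $\eta$; this reconstructs exactly the time derivative of $\tfrac12\|\nabla\psi\|_{L^2(\Omega)}^2$ plus that of the relative-potential part of $\mathcal{E}$ (whose remainders, involving $\partial_t\Phi$ and $\psi$, are controlled by the bounded $\Psi''$ and $\partial_t\Phi\in L^2(0,T;H^1(\Omega))$) and yields the dissipation $\|\nabla\eta\|_{L^2(\Omega)}^2$ against the transport remainders $-\int(\vv\cdot\nabla\phi)\eta$ and $-\int(\UU\cdot\nabla\psi)\eta$. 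The separation makes $\Psi'$ Lipschitz on the relevant range, so $\|\eta\|_{L^2(\Omega)}\le C(\|\nabla\eta\|_{L^2(\Omega)}+\|\psi\|_{L^2(\Omega)})$ by the Poincar\'e and elliptic estimates, and Gagliardo--Nirenberg plus the Sobolev embeddings bound these remainders by $\eps(\|D\vv\|_{L^2(\Omega)}^2+\|\nabla\eta\|_{L^2(\Omega)}^2)+C(t)\,\mathcal{E}(t)$.

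Adding the two estimates, the dissipative terms $\int\nu(\phi)|D\vv|^2$ (which controls $\nu_\ast\|\vv\|_{H^1(\Omega)}^2$ up to a constant by Korn's inequality) and $\|\nabla\eta\|_{L^2(\Omega)}^2$ absorb every gradient remainder, leaving the integrated inequality $\mathcal{E}(t)\le\int_0^t C(s)\,\mathcal{E}(s)\,\d s$ with $C\in L^1(0,T)$ assembled from $\|\UU\|_{H^2(\Omega)}$, $\|\partial_t\UU\|_{L^2(\Omega)}$, $\|M\|_{H^3(\Omega)}$ and $\|\nabla M\|_{L^2(\Omega)}$, all square-integrable in time by \eqref{REG-strong}. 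As $\rho(\phi)\ge\rho_\ast>0$ and $\mathcal{E}(0)=0$, Gronwall's lemma forces $\mathcal{E}\equiv0$, whence $\vv\equiv\mathbf{0}$ and $\psi\equiv0$, i.e.\ $\uu=\UU$, $\phi=\Phi$, and then $\mu=M$ by the definition of the chemical potential. I expect the kinetic estimate to be the main obstacle: the variable density under the time derivative and the cross-diffusion flux $\widetilde{\J}=-\rho'(\phi)\nabla\mu$, which couples the momentum balance to $\nabla\mu$, force one to reorganize all convective and flux remainders through the mass-balance identity so that every surviving term is either genuine dissipation or is controlled by $C(t)\mathcal{E}(t)$ with an $L^1$ prefactor.
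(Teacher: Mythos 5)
Your architecture for the momentum balance --- testing \eqref{weak-form} with $\UU$, combining the energy inequality for $(\uu,\phi)$ with the energy identity for $(\UU,\Phi)$, and reorganizing the density-dependent terms through the mass balance $\partial_t\rho(\phi)+\div(\rho(\phi)\uu+\widetilde{\J})=0$ tested with $\tfrac12|\UU|^2$ --- is exactly the paper's, and it produces the terms $I$--$V$ of \eqref{Kin-diff-4}. The divergence, and the gap, is in the Cahn--Hilliard part. You measure $\psi=\phi-\Phi$ at the $H^1$ level (functional $\tfrac12\|\nabla\psi\|_{L^2}^2$ plus a relative potential, dissipation $\|\nabla\eta\|_{L^2}^2$ with $\eta=\mu-M$), whereas the paper works at the $H^2$ level (functional $\tfrac12\|\Delta\psi\|_{L^2}^2$, dissipation $\|\Delta^2\psi\|_{L^2}^2$, cf.\ \eqref{phi-diff-1}). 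This is not a cosmetic choice: the flux-difference term that survives your reorganization,
\begin{equation*}
III=\frac{\rho_1-\rho_2}{2}\int_0^t\bigl(((\nabla\mu-\nabla M)\cdot\nabla)\UU,\uu-\UU\bigr)\,\d\tau,
\end{equation*}
cannot be closed with your ingredients. Indeed $|III|\le C\|\nabla\eta\|_{L^2}\,\|\nabla\UU\|_{L^{3/a}}\,\|\uu-\UU\|_{L^2}^{1-a}\|\nabla(\uu-\UU)\|_{L^2}^{a}$ for $a\in[\tfrac12,1)$; forcing $\|\nabla\eta\|_{L^2}$ and $\|\nabla(\uu-\UU)\|_{L^2}^{a}$ into your two dissipation terms via Young's inequality leaves the remainder $C\|\nabla\UU\|_{L^{3/a}}^{2/(1-a)}\|\uu-\UU\|_{L^2}^2$, and since $\|\nabla\UU\|_{L^{3/a}}\lesssim\|\UU\|_{H^1}^{a-1/2}\|\UU\|_{H^2}^{3/2-a}$ the Gronwall prefactor is integrable only if $(3/2-a)/(1-a)\le 1$, which is false for every $a$ (and at $a=1$ no $\|\uu-\UU\|_{L^2}$ factor remains at all). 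In short, $III$ is a product of two pure dissipation factors against a coefficient that is only $L^4$ in time, with nothing left over to feed Gronwall; and since both $\|\eta\|_{L^2}$ and $\|\nabla\eta\|_{L^2}$ sit at the dissipation level in your scheme (there is no genuinely lower-order norm of $\eta$ inside your relative energy), interpolation cannot rescue it.

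The paper's $H^2$-level estimate is designed precisely for this term: there $\nabla\eta$ is, up to lower order, $-\nabla\Delta\psi$, which interpolates as $\|\nabla\Delta\psi\|_{L^2}\le C\|\Delta\psi\|_{L^2}^{1/2}\|\Delta^2\psi\|_{L^2}^{1/2}$ between the Gronwall quantity $\|\Delta\psi\|_{L^2}^2$ and the biharmonic dissipation, yielding $|III|\le\eps\|\Delta^2\psi\|_{L^2}^2+\eps\|D(\uu-\UU)\|_{L^2}^2+C\|\nabla\UU\|_{L^3}^4\|\Delta\psi\|_{L^2}^2$ with $\|\nabla\UU\|_{L^3}^4\lesssim\|\UU\|_{H^1}^2\|\UU\|_{H^2}^2\in L^1(0,T)$ by \eqref{REG-strong}; this is exactly why \eqref{Kin-diff-5} carries $\tfrac14\int_0^t\|\Delta^2(\phi-\Phi)\|_{L^2}^2$ on its right-hand side, to be absorbed by the Cahn--Hilliard dissipation in \eqref{DIFF-F}. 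The rest of your proposal (capillary term, $\rho$- and $\nu$-difference terms, transport remainders, the Taylor remainder of the relative potential --- which needs $\Psi'''$ rather than $\Psi''$, both bounded since strict separation here is part of the hypotheses \eqref{REG-weakS}, \eqref{REG-strong} rather than a consequence of \eqref{separation}) does go through as described. To repair the proof you must upgrade the Cahn--Hilliard distance to $\|\Delta(\phi-\Phi)\|_{L^2}^2$ with $\|\Delta^2(\phi-\Phi)\|_{L^2}^2$ dissipation, as the paper does.
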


\begin{remark}\label{rem:EnergyIdentity}
Notice that if $(\uu,\phi)$ is a weak solution to \eqref{AGG}-\eqref{AGG-bc} (as in Theorem \ref{WEAK-SOL}) with $\mu \in L^\infty(0,T; H^1(\Omega))$ and  $\uu\in L^2(0,T;\H^2(\Omega))$, then one can choose $\uu$ as a test function in \eqref{uu-weak}. Exploiting \eqref{EI}, we then obtain the energy identity
 \begin{equation}
\label{EE-strong}
E(\uu(t), \phi(t)) +\int_0^t \int_{\Omega} \nu(\phi) | D \uu|^2 \, \d x \d \tau+ 
\int_0^t \int_{\Omega} |\nabla \mu|^2 \, \d x \d \tau = E(\uu_0, \phi_0)
\end{equation}
for every $t \in [0,T]$.
\end{remark}

\begin{proof}
First of all, thanks to Remark~\ref{rem:EnergyIdentity}, we have the energy identity \eqref{EE-strong} for $(\UU,\Phi,M)$ (i.e. replacing $(\uu,\phi,\mu)$ with $(\UU,\Phi,M)$). In addition, Theorem \ref{well-pos} entails the validity of the following free energy equalities
$$
E_{\text{free}}(\phi(t))+ \int_0^t \int_{\Omega} |\nabla \mu|^2 \, \d x \d \tau 
+ \int_0^t (\uu \cdot \nabla \phi, \mu) \, \d \tau = E_{\text{free}}(\phi_0),
$$
and 
$$
E_{\text{free}}(\Phi(t))+ \int_0^t \int_{\Omega} |\nabla M|^2 \, \d x \d \tau 
+ \int_0^t (\UU \cdot \nabla \Phi, M) \, \d \tau = E_{\text{free}}(\phi_0),
$$
for all $t\in [0,T]$. Thus, we deduce from \eqref{E-ineq} and \eqref{EE-strong} (for $(\UU, \Phi, M)$) that
\begin{equation}
\label{Kin-Ineq}
\int_{\Omega}  \frac12 \rho(\phi(t)) |\uu(t)|^2 \, \d x +\int_0^t \int_{\Omega} \nu(\phi) | D \uu|^2 \, \d x \d \tau 
\leq \int_{\Omega}  \frac12 \rho(\phi_0) |\uu_0|^2 \, \d x 
+ \int_0^t (\mu \nabla \phi, \uu) \, \d \tau,
\end{equation}
and 
\begin{equation}
\label{Kin-Eq}
\int_{\Omega}  \frac12 \rho(\Phi(t)) |\UU(t)|^2 \, \d x +\int_0^t \int_{\Omega} \nu(\Phi) | D \UU|^2 \, \d x \d \tau 
= \int_{\Omega}  \frac12 \rho(\phi_0) |\uu_0|^2 \, \d x + \int_0^t (M \nabla \Phi, \UU) \, \d \tau,
\end{equation}
for all $t \in [0,T]$. Now, taking $\ww= - \UU$ in \eqref{weak-form}, we obtain
\begin{equation}
\label{UU-test}
\begin{split}
&-\int_{\Omega} \rho(\phi(t)) \uu(t)\cdot \UU(t)\, \d x + \int_{\Omega} \rho(\phi_0) |\uu_0|^2\, \d x + \int_0^t (\rho(\phi) \uu, \partial_t \UU) \, \d \tau 
+\int_0^t (\rho(\phi) \uu \otimes \uu, \nabla \UU) \, \d \tau 
\\ 
& \quad 
- \int_0^t (\nu(\phi) D \uu, D \UU) \, \d \tau + \int_0^t (\uu \otimes \widetilde{\J}, \nabla \UU) \, \d \tau = -\int_0^t (\mu \nabla \phi, \UU) \, \d \tau,
\end{split}
\end{equation}
for all $t \in [0,T]$. 
We recall the following basic relations
\begin{equation}
\label{rho1}
\begin{split}
\int_{\Omega}  \frac12 \rho(\phi) |\uu-\UU|^2 \, \d x
&=
\int_{\Omega}  \frac12 \rho(\phi) \left(  |\uu|^2 + |\UU|^2\right) \, \d x
- \int_{\Omega}  \rho(\phi) \uu \cdot \UU \, \d x
\\
&=
\int_{\Omega}  \frac12 \rho(\phi)  |\uu|^2 \, \d x  + \int_{\Omega} \rho(\Phi) |\UU|^2 \, \d x + \int_{\Omega} \frac12 \left( \rho(\phi)-\rho(\Phi)\right) |\UU|^2 \, \d x
\\
&\quad - \int_{\Omega}  \rho(\phi) \uu \cdot \UU \, \d x,
\end{split}
\end{equation}
and 
\begin{equation}
\label{nu1}
\begin{split}
\int_\Omega \nu(\phi) |D(\uu-\UU)|^2 \, \d x
&= \int_\Omega \nu(\phi) \left( |D \uu|^2 + | D \UU)|^2 - 2 D \uu: D \UU \right) \, \d x
\\
&=\int_\Omega \nu(\phi) |D \uu|^2 \, \d x +
\int_\Omega \nu(\Phi) | D \UU)|^2 \, \d x
+ \int_\Omega \left( \nu(\phi)-\nu(\Phi) \right)|D \UU|^2 \, \d x
\\
&\quad -\int_\Omega 2 \nu(\phi) D \uu: D \UU \, \d x.
\end{split}
\end{equation}
Summing \eqref{Kin-Ineq}, \eqref{Kin-Eq} and \eqref{UU-test}, and exploiting \eqref{rho1} and \eqref{nu1}, we find
\begin{equation}
\label{Kin-diff}
\begin{split}
&\int_{\Omega}  \frac12 \rho(\phi(t)) |\uu(t)-\UU(t)|^2 \, \d x + \int_0^t \int_\Omega \nu(\phi) |D(\uu-\UU)|^2 \, \d x \d \tau\\
&\quad
\leq 
\int_\Omega \frac12 \left( \rho(\phi(t))- \rho(\Phi(t))\right) |\UU(t)|^2 \, \d x
+ \int_0^t \left( (\nu(\phi)-\nu(\Phi)) D \UU, D \UU \right) \, \d \tau
\\
& \qquad
-\int_0^t (\nu(\phi) D \uu, D \UU) \, \d \tau  + \int_0^t (\mu \nabla \phi, \uu) \, \d \tau 
+ \int_0^t (M \nabla \Phi, \UU) \, \d \tau 
- \int_0^t (\mu \nabla \phi, \UU) \, \d \tau 
\\
&\qquad 
- \int_0^t (\rho(\phi)\uu, \partial_t \UU) \, \d \tau
-\int_0^t (\rho(\phi) \uu \otimes \uu, \nabla \UU) \, \d \tau
- \int_0^t (\uu \otimes \widetilde{\J}, \nabla \UU) \, \d \tau,
\end{split}
\end{equation}
for all $t \in [0,T]$. The conservation law of the densities reads as 
\begin{equation}
\partial_t \rho(\phi) + \div \left( \rho(\phi) \uu + \widetilde{\J}\right)=0, \quad
\partial_t \rho(\Phi) + \div \left( \rho(\Phi) \UU +\J^\star \right)=0
\quad \text{a.e. in } \Omega \times (0,T).
\end{equation}
For any function $v \in C^1([0,T]; C^1(\overline{\Omega}))$, we have
\begin{equation}
\label{rho-weak-form}
\begin{split}
&\int_\Omega \left( \rho(\phi(t))- \rho(\Phi(t)) \right) v(t) \, \d x - 
\underbrace{ \int_\Omega \left( \rho(\phi(0))- \rho(\Phi(0)) \right) v(0) \, \d x  }_{=0} - \int_0^t (\rho(\phi)-\rho(\Phi), \partial_t v) \, \d \tau\\
&\quad 
= \int_0^t \left( \rho(\phi)\uu- \rho(\Phi) \UU, \nabla v \right)\, \d \tau
+ \int_0^t \left( \widetilde{\J}- \J^\star, \nabla v\right) \, \d \tau,
\end{split}
\end{equation}
for all $t \in [0,T]$. By a density argument, we take $v= \frac12 |\UU|^2$ in \eqref{rho-weak-form} obtaining 
\begin{equation}
\label{rho-test}
\begin{split}
&\int_\Omega  \frac12 \left( \rho(\phi(t))- \rho(\Phi(t)) \right) |\UU(t)|^2 \, \d x \\
&\quad 
= \int_0^t (\rho(\phi)-\rho(\Phi), \UU \cdot \partial_t \UU) \, \d \tau
+ \int_0^t \left( \rho(\phi)\uu- \rho(\Phi) \UU, \nabla \left(\frac12 |\UU|^2\right) \right)\, \d \tau\\
&\qquad 
+ \int_0^t \left( \widetilde{\J}- \J^\star, \nabla \left( \frac12 |\UU|^2\right) \right) \, \d \tau,
\end{split}
\end{equation}
for all $t \in [0,T]$.
Substituting \eqref{rho-test} in \eqref{Kin-diff}, we deduce that
\begin{equation}
\label{Kin-diff-2}
\begin{split}
&\int_{\Omega}  \frac12 \rho(\phi(t)) |\uu(t)-\UU(t)|^2 \, \d x + \int_0^t \int_\Omega \nu(\phi) |D(\uu-\UU)|^2 \, \d x \d \tau\\
&\quad
\leq 
 \int_0^t (\rho(\phi)-\rho(\Phi), \UU \cdot \partial_t \UU) \, \d \tau
+ \int_0^t \left( \rho(\phi)\uu- \rho(\Phi) \UU, \nabla \left(\frac12 |\UU|^2\right) \right)\, \d \tau\\
&\qquad
+ \int_0^t \left( \widetilde{\J}- \J^\star, \nabla \left( \frac12 |\UU|^2\right) \right) \, \d \tau
+ \int_0^t \left( (\nu(\phi)-\nu(\Phi)) D \UU, D \UU \right) \, \d \tau
-\int_0^t (\nu(\phi) D \uu, D \UU) \, \d \tau  
\\
&\qquad 
+ \int_0^t (\mu \nabla \phi, \uu) \, \d \tau 
+ \int_0^t (M \nabla \Phi, \UU) \, \d \tau 
- \int_0^t (\mu \nabla \phi, \UU) \, \d \tau 
- \int_0^t (\rho(\phi)\uu, \partial_t \UU) \, \d \tau
\\
&\qquad 
-\int_0^t (\rho(\phi) \uu \otimes \uu, \nabla \UU) \, \d \tau
- \int_0^t (\uu \otimes \widetilde{\J}, \nabla \UU) \, \d \tau.
\end{split}
\end{equation}
On the other hand, multiplying \eqref{strong-form} by $\uu$ and integrating over $\Omega \times (0,t)$, we infer that
\begin{equation}
\label{strong-test}
\begin{split}
& \int_0^t (\rho(\Phi) \partial_t \UU, \uu) \, \d \tau 
+ \int_0^t  (\rho(\Phi) (\UU \cdot \nabla) \UU, \uu) \, \d \tau 
+ \int_0^t  ( (\J^\star \cdot \nabla ) \UU, \uu ) \, \d \tau 
\\ 
&\quad + \int_0^t  (\nu(\Phi) D \UU, D \uu) \, \d \tau 
 - \int_0^t  (M \nabla \Phi, \uu) \, \d \tau=0, 
 \end{split}
\end{equation}
for all $t \in [0,T]$. Summing \eqref{Kin-diff-2} and \eqref{strong-test}, we arrive at 
\begin{equation*}
\label{Kin-diff-3}
\begin{split}
&\int_{\Omega}  \frac12 \rho(\phi(t)) |\uu(t)-\UU(t)|^2 \, \d x + \int_0^t \int_\Omega \nu(\phi) |D(\uu-\UU)|^2 \, \d x \d \tau\\
&\quad
\leq 
 \underbrace{\int_0^t (\rho(\phi)-\rho(\Phi), \UU \cdot \partial_t \UU) \, \d \tau
 + \int_0^t (\rho(\Phi) \partial_t \UU, \uu) \, \d \tau  
 - \int_0^t (\rho(\phi)\uu, \partial_t \UU) \, \d \tau}_{I}
\\
&\qquad 
+\underbrace{ \int_0^t \left( \rho(\phi)\uu- \rho(\Phi) \UU, \nabla \left(\frac12 |\UU|^2\right) \right)\, \d \tau
+ \int_0^t  (\rho(\Phi) (\UU \cdot \nabla) \UU, \uu) \, \d \tau 
-\int_0^t (\rho(\phi) \uu \otimes \uu, \nabla \UU) \, \d \tau}_{II}
\\
&\qquad
+\underbrace{ \int_0^t  ( (\J^\star \cdot \nabla ) \UU, \uu ) \, \d \tau 
+ \int_0^t \left( \widetilde{\J}- \J^\star, \nabla \left( \frac12 |\UU|^2\right) \right) \, \d \tau
- \int_0^t (\uu \otimes \widetilde{\J}, \nabla \UU) \, \d \tau}_{III}
\\
&\qquad 
+ \underbrace{ \int_0^t \left( (\nu(\phi)-\nu(\Phi)) D \UU, D \UU \right) \, \d \tau
-\int_0^t (\nu(\phi) D \uu, D \UU) \, \d \tau  
+ \int_0^t  (\nu(\Phi) D \UU, D \uu) \, \d \tau }_{IV}
\\
&\qquad 
+ \underbrace{ \int_0^t (\mu \nabla \phi, \uu) \, \d \tau 
+ \int_0^t (M \nabla \Phi, \UU) \, \d \tau 
- \int_0^t (\mu \nabla \phi, \UU) \, \d \tau
- \int_0^t  (M \nabla \Phi, \uu) \, \d \tau}_{V},
\end{split}
\end{equation*}
for all $t \in [0,T]$. 
We notice that
\begin{align}
\label{I}
\begin{split}
&I
\begin{aligned}[t]
&= \int_0^t (\rho(\phi)-\rho(\Phi), \UU \cdot \partial_t \UU) \, \d \tau
 + \int_0^t (\rho(\Phi) -\rho(\phi) \partial_t \UU, \uu) \, \d \tau  
\\
&= - \int_0^t \left( \rho(\phi)-\rho(\Phi) \partial_t \UU,  \uu-\UU \right) \, \d \tau,
\end{aligned}
\end{split}
\\[15pt]
\label{II}
\begin{split}
&II
\begin{aligned}[t]
&=
\int_0^t \left( \left( \rho(\phi)\uu \cdot \nabla \right) \UU , \UU \right)\, \d \tau
- \int_0^t \left( \left( \rho(\Phi) \UU \cdot \nabla \right) \UU , \UU \right)\, \d \tau
+ \int_0^t  (\rho(\Phi) (\UU \cdot \nabla) \UU, \uu) \, \d \tau 
\\
&\quad 
-\int_0^t ( (\rho(\phi) \uu \cdot \nabla) \UU, \uu ) \, \d \tau
\\
&= \int_0^t \left( \left( \rho(\phi)\uu \cdot \nabla \right) \UU , (\UU-\uu) \right)\, \d \tau
- \int_0^t \left( \left( \rho(\Phi) \UU \cdot \nabla \right) \UU , (\UU-\uu) \right)\, \d \tau
\\
&= \int_0^t \left( \left( \left(\rho(\phi)\uu-\rho(\Phi) \UU \right) \cdot \nabla \right) \UU , (\UU-\uu) \right)\, \d \tau
\\
& = 
- \int_0^t \left( \left(  \rho(\phi)( \uu-\UU) \cdot \nabla \right) \UU , \uu-\UU \right)\, \d \tau
-\int_0^t \left( (\rho(\phi)-\rho(\Phi)) (\UU\cdot \nabla ) \UU, \uu-\UU \right) \, \d \tau,
\end{aligned}
\end{split}
\\[15pt]
\label{III}
\begin{split}
&III
\begin{aligned}[t]
&= - \frac{\rho_1-\rho_2}{2} \int_0^t  ( (\nabla M \cdot \nabla ) \UU, \uu ) \, \d \tau 
-\frac{\rho_1-\rho_2}{2}  \int_0^t \left(  ( (\nabla \mu- \nabla M) \cdot \nabla) \UU , \UU \right)  \, \d \tau\\
&\quad 
+ \frac{\rho_1-\rho_2}{2} 
\int_0^t ( (\nabla \mu \cdot \nabla ) \UU, \uu) \, \d \tau
\\
&= \frac{\rho_1-\rho_2}{2}  \int_0^t \left(  ( (\nabla \mu- \nabla M) \cdot \nabla) \UU , (\uu-\UU) \right)  \, \d \tau,
\end{aligned}
\end{split}
\\[15pt]
\label{IV}
\begin{split}
&IV
\begin{aligned}[t]
&= 
\int_0^t \left( (\nu(\phi)-\nu(\Phi)) D \UU, D \UU \right) \, \d \tau
-\int_0^t ( (\nu(\phi) - \nu(\Phi) ) D \uu, D \UU) \, \d \tau 
\\
&= -\int_0^t \left( (\nu(\phi)-\nu(\Phi)) D \UU, D (\uu-\UU)  \right) \, \d \tau,
\end{aligned}
\end{split}
\\[15pt]
\label{V}
\begin{split}
&V
\begin{aligned}[t]
&= 
\int_0^t (\mu \nabla \phi, \uu-\UU) \, \d \tau 
- \int_0^t (M \nabla \Phi, \uu - \UU) \, \d \tau 
= \int_0^t (\mu \nabla \phi - M \nabla \Phi, \uu-\UU) \, \d \tau. 
\end{aligned}
\end{split}
\end{align}
Thanks to \eqref{I}-\eqref{V}, we reach
\begin{equation}
\label{Kin-diff-4}
\begin{split}
&\int_{\Omega}  \frac12 \rho(\phi(t)) |\uu(t)-\UU(t)|^2 \, \d x + \int_0^t \int_\Omega \nu(\phi) |D(\uu-\UU)|^2 \, \d x \d \tau\\
&\quad
\leq  
- \int_0^t \left( \rho(\phi)-\rho(\Phi) \partial_t \UU,  \uu-\UU \right) \, \d \tau
- \int_0^t \left( \left(  \rho(\phi)( \uu-\UU) \cdot \nabla \right) \UU , \uu-\UU \right)\, \d \tau
\\
&\qquad
-\int_0^t \left( (\rho(\phi)-\rho(\Phi)) (\UU\cdot \nabla ) \UU, \uu-\UU \right) \, \d \tau
+\frac{\rho_1-\rho_2}{2}  \int_0^t \left(  ( (\nabla \mu- \nabla M) \cdot \nabla) \UU , (\uu-\UU) \right)  \, \d \tau
\\
&\qquad
-\int_0^t \left( (\nu(\phi)-\nu(\Phi)) D \UU, D (\uu-\UU)  \right) \, \d \tau
\int_0^t (\mu \nabla \phi - M \nabla \Phi, \uu-\UU) \, \d \tau,
\end{split}
\end{equation}
for all $t \in [0,T]$. The right-hand side can be estimated as in 
\cite[Section 6]{Gior2021} and in \cite[Section 4]{G2021}. As a result, we obtain
\begin{equation}
\label{Kin-diff-5}
\begin{split}
&\int_{\Omega}  \frac12 \rho(\phi(t)) |\uu(t)-\UU(t)|^2 \, \d x + \int_0^t \int_\Omega \nu(\phi) |D(\uu-\UU)|^2 \, \d x \d \tau\\
&
\leq  
\frac{\nu_\ast}{2} \int_0^t \| D (\uu-\UU)\|_{L^2(\Omega)}^2 \, \d \tau
+ \frac14 \int_0^t \| \Delta^2 (\phi-\Phi)\|_{L^2(\Omega)}^2 \, \d \tau\\
&\quad +C \int_0^t \left(1+ \| \UU\|_{H^2(\Omega)}^2+ \| \partial_t \UU\|_{L^2(\Omega)}^2 \right)
\left( \| (\uu-\UU) \|_{L^2(\Omega)}^2 + \| \Delta (\phi-\Phi) \|_{L^2(\Omega)}^2 \right) \, \d \tau.
\end{split}
\end{equation}
On the other hand, arguing as in \cite[Section 4]{G2021}, we have the differential equation
\begin{equation}
\label{phi-diff-1}
\begin{split}
&\frac12 \ddt \| \Delta (\phi-\Phi) \|_{L^2(\Omega)}^2 + \| \Delta^2 (\phi-\Phi)\|_{L^2(\Omega)}^2 \\
&= \int_{\Omega} \uu \cdot \nabla (\phi-\Phi) \Delta^2 (\phi-\Phi) \, \d x+
\int_{\Omega} (\uu-\UU) \cdot \nabla \Phi \Delta^2 (\phi-\Phi) \, \d x\\
&\quad + \int_{\Omega} \Delta( \Psi'(\phi)-\Psi'(\Phi)) \Delta^2 (\phi-\Phi) \, \d x.
\end{split}
\end{equation}
We observe that 
\begin{equation}
\label{corr1}
\begin{split}
\left| \int_{\Omega} \uu \cdot \nabla (\phi-\Phi) \, \Delta^2 (\phi-\Phi) \, \d x \right|
&\leq \| \uu\|_{L^3(\Omega)} \| \nabla (\phi-\Phi)\|_{L^6(\Omega)} 
\| \Delta^2 (\phi-\Phi)\|_{L^2(\Omega)}\\
&\leq \frac16 \|\Delta^2 (\phi-\Phi) \|_{L^2(\Omega)}^2+
C \| \uu\|_{L^3(\Omega)}^2 \| \Delta(\phi-\Phi)\|_{L^2(\Omega)}^2,
\end{split}
\end{equation}
and 
\begin{equation}
\label{corr2}
\begin{split}
\left| \int_{\Omega} (\uu-\UU) \cdot \nabla \Phi \, \Delta^2 (\phi-\Phi) \, \d x \right|
& \leq  \| \uu-\UU \|_{L^2(\Omega)} 
\| \nabla \Phi\|_{L^\infty(\Omega)} \| \Delta^2 (\phi-\Phi)\|_{L^2(\Omega)}\\
&\leq \frac16 \|\Delta^2 (\phi-\Phi)\|_{L^2(\Omega)}^2
+ C \| \uu-\UU \|_{L^2(\Omega)}^2.
\end{split}
\end{equation}
By exploiting \eqref{REG-weakS} and \eqref{REG-strong}, it follows that
\begin{equation}
\label{corr3}
\begin{split}
&\left| \int_{\Omega} \Delta( \Psi'(\phi)-\Psi'(\Phi)) \Delta^2 (\phi-\Phi) \, \d x\right|\\\
&\leq \int_{\Omega} \left| \left( \Psi''(\phi) \Delta (\phi-\Phi) 
+ \left( \Psi''(\phi)-\Psi''(\Phi) \right) \Delta \Phi \right) \Delta^2 (\phi-\Phi) \right| \, \d x \\
& \quad +\int_{\Omega} \left| \left( \Psi'''(\phi) \left( |\nabla \phi|^2- |\nabla \Phi|^2 \right) + \left( \Psi'''(\phi)-\Psi'''(\Phi) \right) |\nabla \Phi|^2  \right) \Delta^2 (\phi-\Phi) \right| \, \d x\\
&\leq C \| \Delta (\phi-\Phi)\|_{L^2(\Omega)} 
\| \Delta^2 (\phi-\Phi) \|_{L^2(\Omega)} \\
&\quad 
+ C \left( \| \Psi'''(\phi)\|_{L^\infty(\Omega)}
+ \| \Psi'''(\Phi)\|_{L^\infty(\Omega)}\right) 
\| (\phi-\Phi)\|_{L^\infty(\Omega)} 
\| \Delta \Phi\|_{L^2(\Omega)} 
\| \Delta^2 (\phi-\Phi)\|_{L^2(\Omega)}\\
&\quad + C \left( \| \nabla \phi\|_{L^\infty)(\Omega)}
+\| \nabla \Phi \|_{L^\infty(\Omega)} \right) \| \nabla (\phi-\Phi)\|_{L^2(\Omega)} \| \Delta^2 (\phi-\Phi) \|_{L^2(\Omega)} \\
&\quad + \left( \| \Psi''''(\phi)\|_{L^\infty(\Omega)}
+ \| \Psi''''(\Phi)\|_{L^\infty(\Omega)}\right) 
\| (\phi-\Phi)\|_{L^\infty(\Omega)} 
\|\nabla \Phi\|_{L^\infty(\Omega)}^2 \| \Delta^2 (\phi-\Phi)\|_{L^2(\Omega)}\\
&\leq \frac16 \|\Delta^2 (\phi-\Phi) \|_{L^2(\Omega)}^2
+ C \| \Delta(\phi-\Phi) \|_{L^2(\Omega)}^2.
\end{split}
\end{equation}
As a consequence, integrating \eqref{phi-diff-1} on $[0,t]$ and using \eqref{corr1}-\eqref{corr3}, we infer that
\begin{equation}
\label{phi-diff-2}
\begin{split}
&\frac12 \| \Delta (\phi(t)-\Phi(t)) \|_{L^2(\Omega)}^2 
+ \frac12 \int_0^t \| \Delta^2 (\phi-\Phi)\|_{L^2(\Omega)}^2 \, \d \tau \\
&\leq 
C \int_0^t \left( 1+ \| \uu\|_{L^3(\Omega)}^2\right) 
\left( \| \uu-\UU \|_{L^2(\Omega)}^2+\| \Delta(\phi-\Phi)\|_{L^2(\Omega)}^2\right) \, \d \tau.
\end{split}
\end{equation}
Then, summing \eqref{Kin-diff-5} and \eqref{phi-diff-2}, and exploiting the assumption on $\nu$, we end up with 
\begin{equation}
\label{DIFF-F}
\begin{split}
&\int_{\Omega}  \frac12 \rho(\phi(t)) |\uu(t)-\UU(t)|^2 \, \d x 
+\frac12 \| \Delta (\phi(t)-\Phi(t)) \|_{L^2(\Omega)}^2 \\
&\quad + \frac{\nu_\ast}{2}\int_0^t \|D(\uu-\UU)\|_{L^2(\Omega)}^2 \, \d \tau
+ \frac14 \int_0^t \| \Delta^2 (\phi-\Phi)\|_{L^2(\Omega)}^2 \, \d \tau \\
& \leq  
C \int_0^t  \mathcal{F}(\tau)
\left( \| (\uu-\UU) \|_{L^2(\Omega)}^2 + \| \Delta (\phi-\Phi) \|_{L^2(\Omega)}^2 \right) \, \d \tau,
\end{split}
\end{equation}
where
$$
\mathcal{F}(t)= 1+ \| \UU(t)\|_{H^2(\Omega)}^2+ \| \partial_t \UU(t)\|_{L^2(\Omega)}^2 + \| \uu(t)\|_{L^3(\Omega)}^2.
$$
Finally, since $\mathcal{F}(t)\in L^1(0,T)$ in light of \eqref{REG-weakS} and \eqref{REG-strong}, the desired conclusion $\uu=\UU$ and $\phi=\Phi$ on $[0,T]$ directly follows from the Gronwall lemma.
\end{proof}

\section{Large time regularity in three dimensions}
\label{Reg-3D}
\setcounter{equation}{0}

This section is devoted to the third part of our main result, namely the large time regularity of the velocity $\uu$. {The proof is based on a continuation argument for some Sobolev norms of $\uu$ when the initial data and forcing terms are {\it small}, for which we will make use of Corollary \ref{HREG-MU}. Notice that, in the case of matched densities, a general result has been formulated in \cite[Theorem 8]{Abels2009} for the Navier-Stokes equation with non-constant viscosity. Instead, for unmatched densities, we are forced to consider the full system.}

\begin{proof}[Proof of Theorem \ref{MAIN} - (iii) Large time regularity of the velocity] Let $\Omega$ be a bounded domain in $\R^d$, $d=2,3$ with $\partial \Omega$ of class $C^4$. Consider a global weak solution $(\uu,\phi)$ to \eqref{AGG}-\eqref{AGG-bc} given by Theorem \ref{WEAK-SOL}. We first summarize from the previous parts (i)-(ii) of Theorem \ref{MAIN} that, for any $\tau>0$,  
\begin{equation}
\label{REG-weak3D}
\begin{split}
&\phi \in L^\infty(\tau ,\infty;W^{2,p}(\Omega)), \quad \partial_t \phi \in L^2(\tau,\infty;H^1(\Omega)),\\
&\mu \in L^{\infty}(\tau,\infty; H^1(\Omega))\cap L_{\uloc}^2( [\tau,\infty) ;H^3(\Omega)),
\quad F'(\phi) \in L^\infty(\tau,\infty; L^p(\Omega)),
\end{split}
\end{equation}
where $2\leq p <\infty$ if $d=2$ and $p=6$ if $d=3$. Furthermore, the energy inequality \eqref{energy-ineq} entails that
\begin{equation}
\label{Diss}
\int_0^\infty \| \uu(s)\|_{\H^1_{0,\sigma}(\Omega)}^2 + \| \nabla \mu (s)\|_{L^2(\Omega)}^2 \, \d s \leq C_0,
\end{equation}
for some positive constant $C_0$ depending only on $E(\uu_0,\phi_0)$ and $\Omega$, and the strict separation property
\begin{equation}
\label{SP}
\max_{x \in \overline{\Omega}} \left| \phi(x,t) \right|\leq 1-\delta, \quad \forall \, t \geq T_{SP}
\end{equation}
holds for some $\delta \in (0,1)$ and $T_{SP}>0$.
It immediately follows from \eqref{SP} that $F''(\phi)$ and $F'''(\phi)$ are globally bounded in $[T_{SP}, \infty)$. As a consequence, it is easily seen from \eqref{AGG}$_4$ and \eqref{REG-weak3D} that $\partial_t \mu \in L^2(T_{SP},\infty; H^1(\Omega)')$, which, in turn, gives $\mu \in BC([T_{SP}, \infty); H^1(\Omega))$. Then, we infer from the regularity theory of the Laplace equation with homogeneous Neumann boundary condition that $\phi \in L^\infty(T_{SP},\infty; H^3(\Omega))\cap L_{\uloc}^2([T_{SP};\infty); H^4(\Omega))$. Finally, we observe from the achieved regularity that $\partial_\n \phi(t)=0$ on $\partial \Omega$ for any $t\geq T_{SP}$.

Let us now fix $0<\widetilde{\varepsilon}<\varepsilon<1$, whose specific (small) values will be chosen later on. Thanks to \eqref{Diss}, there exists a positive time $\TT=\TT(\widetilde{\varepsilon},\varepsilon)$  (which can be taken, without loss of generality, larger than $T_{SP}$) such that 
\begin{equation}
\label{Small-u0}
\| \uu(\TT)\|_{\H_{0,\sigma}^1(\Omega)}\leq \varepsilon \quad \text{and} \quad
\left( \int_\TT^\infty  \| \uu(s)\|_{\H^1_{0,\sigma}(\Omega)}^2 \, \d s \right)^\frac12 \leq \widetilde{\varepsilon}.
\end{equation}
Furthermore, we infer from the above regularity properties that $\phi(\TT) \in H^2(\Omega)$,   $\| \phi(\TT)\|_{L^\infty(\Omega)}< 1$, $|\overline{\phi(\TT)}|<1$, $\mu(\TT)=-\Delta \phi(\TT)+\Psi'(\phi(\TT)) \in H^1(\Omega)$ and $\partial_\n \phi(\TT)=0$ on $\partial \Omega$. Exploiting once again \eqref{Diss}, up to a possible redefinition of $\TT$, we also have
\begin{equation}
\label{Small-mu0}
\| \nabla \mu(\TT)\|_{L^2(\Omega)}\leq \widetilde{\varepsilon} \quad \text{and} \quad
\left( \int_\TT^\infty \| \nabla \mu(s)\|_{L^2(\Omega)}^2 \, \d s \right)^\frac12  \leq \widetilde{\varepsilon}.
\end{equation}
Thanks to \eqref{N-phit} of Theorem \ref{CH-strong} and Corollary \ref{HREG-MU}, we obtain the following estimates
 \begin{equation}
 \label{int-phit-sm}
\begin{split}
\int_\TT^\infty \| \nabla \partial_t \phi(s)\|_{L^2(\Omega)}^2 \, \d s
&\leq 
C \left(  \| \nabla \mu(\TT)\|_{L^2(\Omega)}^2+ \int_\TT^\infty \| \nabla \uu(s)\|_{L^2(\Omega)}^2 \, \d s\right) \\
&\quad \times \left( 1+\left( \int_\TT^\infty \| \nabla \uu(s)\|_{L^2(\Omega)}^2 \, \d s \right) \ \mathrm{exp}\left( 2C \int_\TT^\infty \| \nabla \uu(s)\|_{L^2(\Omega)}^2 \, \d s \right) \right)\\
&\leq 2 C \widetilde{\varepsilon}^2 (1+\widetilde{\varepsilon}^2) \mathrm{e}^{2C \widetilde{\varepsilon}}
= \widetilde{K_1} \widetilde{\varepsilon}^2
\end{split}
\end{equation}
and 
\begin{equation}
\label{int-mu-sm}
\begin{split}
\int_\TT^\infty \| \nabla \mu(s)\|_{H^2(\Omega)}^2 \, \d s
&\leq 
C \left(  \| \nabla \mu(\TT)\|_{L^2(\Omega)}^2+ \int_\TT^\infty \| \nabla \uu(s)\|_{L^2(\Omega)}^2 \, \d s +  \int_\TT^\infty \| \nabla \mu(s)\|_{L^2(\Omega)}^2 \, \d s \right) \\
&\quad \times \left( 1+\left( \int_\TT^\infty \| \nabla \uu(s)\|_{L^2(\Omega)}^2 \, \d s \right) \ \mathrm{exp}\left( 2C \int_\TT^\infty \| \nabla \uu(s)\|_{L^2(\Omega)}^2 \, \d s \right) \right)\\
&\leq 3 C \widetilde{\varepsilon} \left( 1+\widetilde{\varepsilon} \right) \mathrm{e}^{2C \widetilde{\varepsilon}}
= \widetilde{K_2} \widetilde{\varepsilon}^2,
\end{split}
\end{equation}
where $\widetilde{K_1}$ and $\widetilde{K_2}$ are two positive constants depending only on $\Omega$, $\theta$, $\theta_0$, $F$, and $\overline{\phi_0}$.

Next, by \cite[Theorem 1.1]{G2021} there exists a local strong solution $(\UU, \Pi, \Phi)$ defined on the maximal interval $[\TT, \TT_{\rm{max}})$ such that
\begin{equation}
\label{REG-strong-2}
\begin{split}
&\UU \in C([\TT,\wTT]; \H^1_{0,\sigma}(\Omega)) \cap 
L^2(\TT,\wTT;\H^2(\Omega))\cap W^{1,2}(\TT,\wTT;\L^2_\sigma(\Omega)),\quad \Pi \in L^2(\TT,\wTT;H^1(\Omega)), \\
&\Phi \in L^\infty(\TT,\wTT;H^3(\Omega))\cap L^2(\TT,\wTT ; H^4(\Omega)), \quad \partial_t \Phi \in L^2(\TT,\wTT;H^1(\Omega)),\\
&\Phi \in C( \overline{\Omega}\times [\TT,\wTT]) \quad\text{with}\quad \max_{t \in [\TT,\wTT]} \|\Phi(t)\|_{C(\overline{\Omega})}<1, \\
&M= -\Delta \Phi+\Psi'(\Phi) \in C([\TT,\wTT]; H^1(\Omega))\cap L^2(\TT,\wTT;H^3(\Omega)),
\end{split}
\end{equation}
for any $\TT\leq \wTT <\TT_{\rm{max}}$,
which satisfies \eqref{AGG} almost everywhere in $(\TT,\TT_{\rm{max}})$ and the energy identity in $[\TT,\wTT]$ for any $\TT\leq \wTT <\TT_{\rm{max}}$ (cf. Remark \ref{rem:EnergyIdentity}). On the other hand, in light of Theorem \ref{weak-strong}, we deduce that $(\uu,\phi)=(\UU,\Phi)$ (and so $P=\Phi$) in the interval $[\TT,\TT_{\rm{max}})$. This implies that 
\begin{equation}
\label{u-strong-loc}
\uu \in C([\TT,\wTT]; \H^1_{0,\sigma}(\Omega))\cap L^2(\TT,\wTT;\H^2(\Omega))\cap W^{1,2}(\TT,\wTT;\L^2_\sigma(\Omega)),\quad \forall \, \TT\leq \wTT<\TT_{\rm{max}},
\end{equation}
and \eqref{AGG}$_1$ is solved almost everywhere in $(\TT,\TT_{\rm{max}})$.

We are left to show that $\TT_{\rm{max}}= \infty$ provided that $\varepsilon$, $\widetilde{\varepsilon}$ and $\TT$ are suitably chosen. To this end, we notice that the regularity properties in \eqref{REG-weak3D} and the separation property in \eqref{SP} hold globally in time (from $T_{SP}$ on), and so they are  independent of $\TT_{\rm{max}}$. Thus, we only need to prove that the norms of the spaces in \eqref{u-strong-loc} do not blow up as $\wTT$ is replaced by $\TT_{\rm{max}}$. We now write the momentum equation \eqref{AGG}$_1$ as follows
\begin{equation}
 \label{NS}
\rho (\phi) \partial_t \uu + \rho(\phi) \left( \uu \cdot \nabla \right) \uu -
 \rho'(\phi) \left( \nabla \mu \cdot \nabla \right) \uu 
- \div \left( \nu(\phi) D \uu \right) +\nabla P^\ast= \f, \quad \text{in } \, \Omega \times (\TT,\TT_{\rm{max}}),
\end{equation}
where 
$$
P^\ast= P + \frac12 |\nabla \phi|^2 + \Psi(\phi) -\mu \phi
\quad \text{and} \quad \f= - \phi \nabla \mu.
$$
Multiplying \eqref{NS} by $\partial_t \uu$ and integrating over $\Omega$, we find
\begin{equation}
\label{v-test1}
\begin{split}
&\ddt \int_{\Omega} \frac{\nu(\phi)}{2} | D \uu|^2 \, \d x + \frac{\rho_\ast}{2} \| \partial_t \uu\|_{L^2(\Omega)}^2 \\
&\quad \leq C \left( \| \f\|_{L^2(\Omega)}^2+ \| \left( \uu \cdot \nabla \right) \uu \|_{L^2(\Omega)}^2 + \| \left( \nabla \mu \cdot \nabla \right) \uu\|_{L^2(\Omega)}^2 + \int_{\Omega} |\partial_t \phi| | D \uu|^2 \, \d x \right),
\end{split}
\end{equation}
for some positive constant $C$ depending only on $\rho_1$, $\rho_2$, $\nu_1$ and $\nu_2$. 
Besides, by the regularity theory of the Stokes operator with variable viscosity (see \cite[Lemma 4]{Abels2009}), we have 
\begin{equation}
\label{v-H2}
\| \uu\|_{H^2(\Omega)}^2 \leq C \left( \| \f\|_{L^2(\Omega)}^2 + \| \left( \uu \cdot \nabla \right) \uu \|_{L^2(\Omega)}^2 + \| \left( \nabla \mu \cdot \nabla \right) \uu\|_{L^2(\Omega)}^2 +\| \partial_t\uu\|_{L^2(\Omega)}^2 \right),
\end{equation}
for some positive constant $C$ depending on $\rho_1$, $\rho_2$, $\nu_1$, $\nu_2$, $\Omega$ and $\| \phi\|_{BC([\TT,\infty); W^{1,4}(\Omega))}$. 
For $\vartheta>0$, we observe that 
\begin{equation}
\begin{split}
\| \left( \uu \cdot \nabla \right) \uu \|_{L^2(\Omega)}^2 
&\leq \| \uu\|_{L^6(\Omega)}^2 \| \nabla \uu\|_{L^3(\Omega)}^2\\
&\leq C \| \nabla \uu\|_{L^2(\Omega)}^3 \| \uu\|_{H^2(\Omega)}\\
&\leq \vartheta \| \uu\|_{H^2(\Omega)}^2 + C_\vartheta \| \nabla \uu\|_{L^2(\Omega)}^6,
\end{split}
\end{equation}
and
\begin{equation}
\begin{split}
\| \left( \nabla \mu \cdot \nabla \right) \uu\|_{L^2(\Omega)}^2
& \leq \| \nabla \uu\|_{L^3(\Omega)}^2 \| \nabla \mu\|_{L^6(\Omega)}^2\\
&\leq C \| \nabla \uu\|_{L^2(\Omega)} \| \uu\|_{H^2(\Omega)} \| \nabla \mu\|_{L^2(\Omega)} \| \nabla \mu\|_{H^2(\Omega)}\\
&\leq  \vartheta \| \uu\|_{H^2(\Omega)}^2 + C_\vartheta \| \nabla \mu \|_{L^2(\Omega)}^2 \| \nabla \mu \|_{H^2(\Omega)}^2 \| D \uu \|_{L^2(\Omega)}^2,
\end{split}
\end{equation}
as well as
\begin{equation}
\begin{split}
\int_{\Omega} |\partial_t \phi| | D \uu|^2 \, \d x 
& \leq \| \partial_t \phi \|_{L^3(\Omega)} \| D \uu\|_{L^3(\Omega)}^2 \\
&\leq C \| \nabla \partial_t \phi\|_{L^2(\Omega)} \| D \uu \|_{L^2(\Omega)} \| \uu\|_{H^2(\Omega)}\\
& \leq \vartheta \| \uu\|_{H^2(\Omega)}^2 + C_\vartheta \| \nabla \partial_t \phi\|_{L^2(\Omega)}^2 \| D \uu \|_{L^2(\Omega)}^2,
\end{split}
\end{equation}
for some positive constant $C_\vartheta$ depending only on $\vartheta$ and $\Omega$. 
Combining the above estimates and choosing $\vartheta$ sufficiently small, we arrive at the differential inequality
\begin{equation}
\label{v-di}
\begin{split}
&\ddt \int_{\Omega} \frac{\nu(\phi)}{2} | D \uu|^2 \, \d x + \frac{\rho_\ast}{4} \| \partial_t \uu\|_{L^2(\Omega)}^2 + \varpi \| \uu\|_{H^2(\Omega)}^2 \\
&\quad \leq C_R \| \f\|_{L^2(\Omega)}^2+ C_R \left( \| \nabla \uu\|_{L^2(\Omega)}^4+ \| \nabla \partial_t \phi\|_{L^2(\Omega)}^2 + \| \nabla \mu \|_{H^2(\Omega)}^2  \right) \| \nabla \uu\|_{L^2(\Omega)}^2,
\end{split}
\end{equation}
where $\varpi$ is a (possibly small) positive constant and $C_R$ is a (possibly large) positive constant. They depends on $\| \phi\|_{BC( [\TT,\infty); W^{1,4}(\Omega))}$, $\| \nabla \mu\|_{L^\infty(\TT, \infty; L^2(\Omega))}$, $\rho_1$, $\rho_2$, $\nu_1$, $\nu_2$ and $\Omega$, but are independent of $\varepsilon$, $\widetilde{\varepsilon}$ and $\TT$.
Let us now define 
\begin{equation}
\label{T_star}
\mathcal{T}_\ast= \max \left\lbrace t\in [\TT,\TT_{\rm{max}}): \| \uu\|_{C([\TT,t]; \H_{0,\sigma}^1(\Omega))} \leq  \sqrt{\frac{ 4 \nu^\ast}{\nu_\ast}} \varepsilon \right\rbrace.
\end{equation}
Since $\sqrt{\frac{4 \nu^\ast}{\nu_\ast}}>1$, it is easily seen that $\mathcal{T}_\ast> \TT$. We choose $\varepsilon\in (0,1) $ such that
\begin{equation}
\label{eps-def}
C_R \left(  \sqrt{\frac{ 4 \nu^\ast}{\nu_\ast}} \varepsilon \right)^4 \leq \frac{\varpi}{4}.
\end{equation}
Recalling that 
\begin{equation}
\label{U-H1-eq}
\frac{\nu_\ast}{4} \| \nabla \uu\|_{L^2(\Omega)}^2 \leq \int_{\Omega} \frac{\nu(\phi)}{2} | D \uu|^2 \, \d x \leq \frac{\nu^\ast}{2} \| \nabla \uu\|_{L^2(\Omega)}^2,
\end{equation}
we infer from \eqref{v-di} and \eqref{eps-def} that, for almost all $t\in [\TT,\mathcal{T}_\ast]$,
\begin{equation}
\label{v-di-2}
\begin{split}
&\ddt \left( \int_{\Omega} \frac{\nu(\phi)}{2} | D \uu|^2 \, \d x \right)
+ \frac{\varpi}{4 \nu^\ast} \left( \int_{\Omega} \frac{\nu(\phi)}{2} | D \uu|^2 \, \d x \right)
+ \frac{\rho_\ast}{4} \| \partial_t \uu\|_{L^2(\Omega)}^2+\frac{\varpi}{2} \| \uu\|_{H^2(\Omega)}^2 \\
&\quad \leq C_R \| \f\|_{L^2(\Omega)}^2+ C_R \left( \| \nabla \partial_t \phi\|_{L^2(\Omega)}^2 + \| \nabla \mu \|_{H^2(\Omega)}^2  \right) \left( \sqrt{\frac{4 \nu^\ast}{\nu_\ast}} \varepsilon\right)^2.
\end{split}
\end{equation}
Then, integrating in time \eqref{v-di-2}, multiplying by $\frac{4}{\nu_\ast}$ and using \eqref{Small-mu0}, \eqref{int-phit-sm}, \eqref{int-mu-sm} and \eqref{U-H1-eq}, we obtain that
\begin{equation}
\label{main-arg}
\begin{split}
&\| \nabla \uu (t)\|_{L^2(\Omega))}^2 
+ \frac{\rho_\ast}{\nu_\ast} \int_\TT^t \| \partial_t \uu(s)\|_{L^2(\Omega)}^2\, \d s
+ \frac{2 \varpi}{\nu_\ast} \int_\TT^t \| \uu(s)\|_{H^2(\Omega)}^2\, \d s\\
&\quad \leq 
\frac{2 \nu^\ast}{\nu_\ast} \| \nabla \uu(\TT)\|_{L^2(\Omega)}^2 
+ \frac{4 C_R}{\nu_\ast} \| \f \|_{L^2(\TT,\infty; L^2(\Omega))}^2 \\
&\qquad + \frac{16 C_R \nu^\ast}{\nu_\ast^2}  \varepsilon^2 \left( 
\int_\TT^\infty \| \nabla \partial_t \phi(s)\|_{L^2(\Omega)}^2 \, \d s + \int_\TT^\infty \| \nabla \mu (s) \|_{H^2(\Omega)}^2 \, \d s \right)
\\
&\quad \leq \frac{2 \nu^\ast}{ \nu_\ast}\varepsilon^2 + \frac{4 C_R}{\nu_\ast} \int_\TT^\infty \| \nabla \mu(s)\|_{L^2(\Omega)}^2 \, \d s \\
&\qquad + \frac{16 C_R \nu^\ast}{\nu_\ast^2}  \varepsilon^2 \left( 
\int_\TT^\infty \| \nabla \partial_t \phi(s)\|_{L^2(\Omega)}^2 \, \d s + \int_\TT^\infty \| \nabla \mu (s) \|_{H^2(\Omega)}^2 \, \d s \right)
\\
&\quad \leq \frac{2 \nu^\ast}{ \nu_\ast}\varepsilon^2 
+ \frac{4 C_R}{\nu_\ast} \widetilde{\varepsilon}^2 
+ \frac{16 C_R \nu^\ast}{\nu_\ast^2}  (\widetilde{K_1}+\widetilde{K_2}) \widetilde{\varepsilon}^2,
\end{split}
\end{equation}
for any $t \in [\TT, \TT_\ast]$.
Setting now $\widetilde{\varepsilon}\in (0,\varepsilon)$ such that
\begin{equation}
\label{eps2-def}
\left[ \frac{4 C_R}{\nu_\ast}+ \frac{16 C_R \nu^\ast}{\nu_\ast^2}(\widetilde{K_1}+\widetilde{K_2})\right] \widetilde{\varepsilon}^2< \frac{2\nu^\ast}{\nu_\ast} \varepsilon^2,
\end{equation}
we eventually infer that
\begin{equation}
\max_{t \in [\TT, \TT_\ast]} \| \uu(t)\|_{\H^1_{0,\sigma}(\Omega)} <  \sqrt{\frac{ 4 \nu^\ast}{\nu_\ast}} \varepsilon.
\end{equation}
In particular, $\| \uu (\mathcal{T}_\ast)\|_{\H^1_{0,\sigma}(\Omega)} < \sqrt{\frac{ 4 \nu^\ast}{\nu_\ast}} \varepsilon$. However, by continuity, there exists 
$\widetilde{\tau}>0$ such that $\| \uu (t)\|_{\H^1_{0,\sigma}(\Omega)} \leq  \sqrt{\frac{ 4 \nu^\ast}{\nu_\ast}} \varepsilon$ in $[\mathcal{T}_\ast, \mathcal{T}_\ast+\widetilde{\tau}]$. Thus, we found a contradiction with the definition of $\mathcal{T}_\ast$ in \eqref{T_star}. We conclude that 
\begin{equation}
\label{u-H1-small}
\| \uu\|_{C([\TT,\widetilde{\TT}]; \H_{0,\sigma}^1(\Omega))} \leq  \sqrt{\frac{ 4 \nu^\ast}{\nu_\ast}} \varepsilon, \quad \forall \, \TT\leq \wTT<\TT_{\rm{max}}.
\end{equation}
In addition, exploiting \eqref{main-arg} and \eqref{eps2-def}, we are led to
\begin{equation}
\label{u-H2-small}
 \| \uu\|_{L^2(\TT,\widetilde{\TT};\H^2(\Omega))}
+ \| \partial_t \uu \|_{L^2(\TT,\widetilde{\TT} ;\L_\sigma^2(\Omega))} \leq \frac{\sqrt{\frac{4\nu^\ast}{\nu_\ast} \varepsilon^2}}{\min \left\lbrace \frac{\rho_\ast}{\nu_\ast}, \frac{2\varpi}{\nu_\ast} \right\rbrace}, \quad \forall \, \, \TT\leq \wTT<\TT_{\rm{max}}.
\end{equation}
In light of \eqref{u-H1-small} and \eqref{u-H2-small}, we have that
$$
\limsup_{t \rightarrow \TT_{\rm{max}}} \left( \| \uu\|_{C([\TT,t ]; \H_{0,\sigma}^1(\Omega))} + \| \uu\|_{L^2(\TT,t ;\H^2(\Omega))} +  \| \partial_t \uu \|_{L^2(\TT,t ;\L_{\sigma}^2(\Omega))} \right)<\infty.
$$
As a result, by a classical argument, it is possible to define $\uu(\TT_{\rm{max}}) \in \H^1_{0,\sigma}$. Since $\phi(\TT_{\rm{max}}) \in H^2(\Omega)$,   $\| \phi(\TT_{\rm{max}})\|_{L^\infty(\Omega)}< 1$, $|\overline{\phi(\TT_{\rm{max}})}|<1$, $\mu(\TT_{\rm{max}}) \in H^1(\Omega)$ and $\partial_\n \phi(\TT_{\rm{max}})=0$ on $\partial \Omega$, a further application of \cite[Theorem 1.1]{G2021} and Theorem \ref{weak-strong} ensure the existence of a strong solution beyond the maximal time $\TT_{\rm{max}}$, which contradicts with the definition of $\TT_{\rm{max}}$. Hence, $(\uu, P, \phi)$ is a strong solution defined on the whole interval $[\TT,\infty)$ such that
\begin{equation}
\| \uu\|_{L^\infty(\TT,\infty; \H^1_{0,\sigma}(\Omega))}
+ \| \uu\|_{L^2(\TT,\infty;\H^2(\Omega))}
+ \| \partial_t \uu \|_{L^2(\TT,\infty ;\L_\sigma^2(\Omega))}
\leq 2 \max \left\lbrace \sqrt{\frac{ 4 \nu^\ast}{\nu_\ast}} \varepsilon, 
\frac{\sqrt{\frac{4\nu^\ast}{\nu_\ast} \varepsilon^2}}{\min \left\lbrace \frac{\rho_\ast}{\nu_\ast}, \frac{2\varpi}{\nu_\ast} \right\rbrace} \right\rbrace.
\end{equation}
Therefore, the desired claim in Theorem \ref{MAIN} - (iii) follows with $T_R= \TT(\varepsilon)$ where $\varepsilon \in (0,1)$ is given by \eqref{eps-def}. 
\end{proof}

\section{Convergence to equilibrium}
\label{Conv-eq}
\setcounter{equation}{0}

In this section, we complete the proof of Theorem \ref{MAIN} by showing that each weak solution converges to an equilibrium (stationary state) of the system \eqref{AGG}-\eqref{AGG-bc}.

\begin{proof}[Proof of Theorem \ref{MAIN} - (iv) Convergence to a stationary solution] 
Let us first recall (cf. Theorem \ref{MAIN} - (ii)) that 
\begin{equation*}
|\phi(x,t)|\leq 1-\delta, \quad \forall \, (x,t) \in \overline{\Omega}\times [T_{SP},\infty).
\end{equation*}
Now let $\widetilde{\Psi}$ be the smooth and bounded function such that
$\widetilde{\Psi}|_{[1+\delta,1-\delta]}=\Psi|_{[-1+\delta,1-\delta]}$, and
\begin{equation*}
\widetilde{E}_{\text{free}}(\varphi):= \int_\Omega \frac12 |\nabla \varphi|^2 +  \widetilde{\Psi}(\varphi)\, \d x,  
\end{equation*}
for all $\varphi \in H^1_{(m)}(\Omega)$ with $|\varphi(x)|\leq 1$ for almost every $x\in\Omega$. Then, we report the following result whose proof can be found in \cite[Proposition 6.1]{AW2007}.

\begin{proposition}[Lojasiewicz-Simon inequality]\label{prop:LS}
Let $\phi'\in \mathcal{D}(\partial E_0)$ be a solution to (\ref{eq:StatCH1})-(\ref{eq:StatCH3}).
Then, there exist three constants $\theta\in \left(0,\frac{1}{2}\right], C, \kappa>0$ such that
\begin{equation}
  \label{eq:LS}
\left|\widetilde{E}_{\mathrm{free}}(\varphi)-\widetilde{E}_{\mathrm{free}}(\phi')
\right|^{1-\theta}\le C
\left\| D\widetilde{E}_{\mathrm{free}}(\varphi) \right\|_{H^1_{(0)}(\Omega)'}, \quad \forall \, \varphi \in H^1_{(m)}(\Omega):  \|\varphi-\phi'\|_{\Hone(\Omega)}\leq\kappa,
\end{equation}
where $D\widetilde{E}_{\mathrm{free}}\colon H^1_{(m)}(\Omega)\to 
{H^1_{(0)}(\Omega)'}$ denotes the Frech\'et derivative of $\widetilde{E}_{\mathrm{free}}\colon H^1_{(m)}(\Omega)\to \R$.
\end{proposition}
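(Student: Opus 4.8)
The plan is to derive Proposition~\ref{prop:LS} from the abstract Lojasiewicz-Simon gradient inequality for analytic functionals whose second variation is a Fredholm operator, following the Hilbert-space framework of Simon and of Chill. Throughout I would regard $\widetilde{E}_{\mathrm{free}}$ as a functional on the affine space $H^1_{(m)}(\Omega)$, whose tangent space at every point is the fixed linear space $H^1_{(0)}(\Omega)$. Accordingly $D\widetilde{E}_{\mathrm{free}}(\varphi)\in H^1_{(0)}(\Omega)'$ acts by
\[
\langle D\widetilde{E}_{\mathrm{free}}(\varphi),\psi\rangle = \int_\Omega \nabla\varphi\cdot\nabla\psi + \widetilde{\Psi}'(\varphi)\,\psi\,\d x, \qquad \psi\in H^1_{(0)}(\Omega),
\]
and the given stationary state $\phi'$ is a critical point, $D\widetilde{E}_{\mathrm{free}}(\phi')=0$, precisely because it solves \eqref{eq:StatCH1}--\eqref{eq:StatCH3} (the Lagrange multiplier/constant in \eqref{eq:StatCH1} being annihilated when tested against mean-zero functions).

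First I would verify that $\widetilde{E}_{\mathrm{free}}$ is real-analytic in a neighbourhood of $\phi'$. The Dirichlet part is a bounded quadratic form and hence entire, so the issue is the potential term $\varphi\mapsto\int_\Omega\widetilde{\Psi}(\varphi)\,\d x$. Here I would exploit that, by the strict separation of stationary states (Proposition~\ref{prop:1}), the range of $\phi'$ lies in a compact subinterval of $(-1,1)$ on which $\Psi$, and therefore the truncation $\widetilde{\Psi}$, is real-analytic with uniformly bounded derivatives of all orders. Analyticity of the associated Nemytskii operator $\varphi\mapsto\widetilde{\Psi}'(\varphi)$, viewed as a map into $L^2(\Omega)\hookrightarrow H^1_{(0)}(\Omega)'$, then follows from the Taylor-remainder estimates for analytic superposition operators, exactly as in the treatment of the Cahn-Hilliard energy in \cite{AW2007}.

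Next I would establish the Fredholm property of the linearisation. The second Fréchet derivative at $\phi'$ is the bounded symmetric form
\[
\langle D^2\widetilde{E}_{\mathrm{free}}(\phi')\,\psi_1,\psi_2\rangle = \int_\Omega \nabla\psi_1\cdot\nabla\psi_2 + \widetilde{\Psi}''(\phi')\,\psi_1\psi_2\,\d x,
\]
which corresponds to the operator $L=-\Delta_N+\widetilde{\Psi}''(\phi')$ on $L^2_{(0)}(\Omega)$ with homogeneous Neumann boundary condition. Since $\phi'$ is separated, $\widetilde{\Psi}''(\phi')\in L^\infty(\Omega)$, so $L$ is a bounded (hence relatively compact) perturbation of the Neumann Laplacian, which has compact resolvent on $L^2_{(0)}(\Omega)$. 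Thus $L$ is self-adjoint with discrete spectrum and Fredholm of index zero, with finite-dimensional kernel; this is the only structural hypothesis, beyond analyticity, required by the abstract machinery.

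With analyticity and the Fredholm property in hand, the abstract Lojasiewicz-Simon theorem yields constants $\theta\in(0,\tfrac12]$, $C>0$ and $\kappa>0$ such that \eqref{eq:LS} holds for all $\varphi\in H^1_{(m)}(\Omega)$ with $\|\varphi-\phi'\|_{\Hone(\Omega)}\leq\kappa$, completing the proof. The delicate point, and the step I expect to demand the most care, is the verification that the potential part is genuinely \emph{analytic} (not merely $C^\infty$) as a map into $H^1_{(0)}(\Omega)'$: this rests decisively on the uniform analytic bounds on all derivatives of $\widetilde{\Psi}$ supplied by the truncation together with the separation property, since without the truncation the logarithmic singularity of $\Psi$ at $\pm1$ would obstruct the global analyticity needed to apply the abstract inequality.
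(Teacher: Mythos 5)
The paper offers no proof of this proposition: it is quoted directly from \cite[Proposition 6.1]{AW2007}, so the only meaningful comparison is with the strategy of that reference, which is indeed the one you outline (analytic modification of the potential, criticality and Fredholm property of the linearization at the separated stationary state, and the abstract Lojasiewicz--Simon theorem). Your identification of the second variation as $-\Delta+\widetilde\Psi''(\phi')$ with Neumann conditions on $L^2_{(0)}(\Omega)$, self-adjoint with compact resolvent and hence Fredholm of index zero, is correct, as is the observation that $\phi'$ is a critical point because the constant on the right of \eqref{eq:StatCH1} is annihilated by mean-zero test functions.

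There is, however, one step that fails as literally written: the claimed analyticity of $\varphi\mapsto\widetilde\Psi'(\varphi)$ from an $H^1$-neighbourhood of $\phi'$ into $L^2(\Omega)\hookrightarrow H^1_{(0)}(\Omega)'$. Since $H^1(\Omega)\not\hookrightarrow L^\infty(\Omega)$ for $d=2,3$, functions $\varphi$ in that neighbourhood take arbitrary real values, so (i) the truncation $\widetilde\Psi$ must be chosen real-analytic on all of $\R$ with bounds of the form $|\widetilde\Psi^{(k)}(s)|\leq M\,k!\,R^{-k}$ (the separation of $\phi'$ alone, which you invoke, only controls $\widetilde\Psi$ on the range of $\phi'$, and ``smooth and bounded'' is not enough for Lojasiewicz--Simon); and (ii) even for such a $\widetilde\Psi$ the Taylor term $\widetilde\Psi^{(k+1)}(\varphi)h^k/k!$ can only be estimated in $L^2(\Omega)$ through $\|h\|_{L^{2k}(\Omega)}^k$, which $h\in H^1(\Omega)\hookrightarrow L^6(\Omega)$ does not control for large $k$; the same obstruction already affects analyticity of $\varphi\mapsto\int_\Omega\widetilde\Psi(\varphi)\,\d x$ on $H^1$. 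This is precisely why \cite{AW2007} does not apply the basic Hilbert-space version of the abstract theorem but the refined two-space result of Chill, in which analyticity of the energy and of its gradient need only be verified on the smaller space $W=\lbrace u\in H^2(\Omega):\partial_\n u|_{\partial\Omega}=0\rbrace\cap H^1_{(m)}(\Omega)\hookrightarrow C(\overline\Omega)$, where superposition with an entire-type function is genuinely analytic, while the gradient inequality is still delivered on an $H^1_{(0)}$-neighbourhood of $\phi'$. Without this device (or an equivalent bootstrapping from $C(\overline\Omega)$- to $H^1$-neighbourhoods) your argument does not close; with it, the rest of your outline is sound.
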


Next, we note that the only critical points of the energy
\begin{equation*}
E_{\text{kin}}(\vv,\varphi)= \int_\Omega \frac12 \rho(\varphi)|\vv(x)|^2\, \d x, \quad \forall \, \vv\in \L^2_\sigma(\Omega), \, \varphi \in  L^\infty(\Omega) \, 
\text{ with } \, \|\varphi\|_{L^\infty(\Omega)}<1
\end{equation*}
are $(\vv,\varphi)$ with $\vv =\mathbf{0}$ and $\varphi$ arbitrary. Moreover,  obviously
\begin{equation*}
\left| E_{\text{kin}}(\vv,\varphi)-E_{\text{kin}}(\mathbf{0},\phi') \right|^\frac12\leq \sqrt{\frac{\rho^\ast}{2}} \left\|\vv\right\|_{L^2(\Omega)}, \quad \forall \, \vv\in \L^2_\sigma(\Omega), \, \varphi \in  L^\infty(\Omega) \, 
\text{ with } \, \|\varphi\|_{L^\infty(\Omega)}<1.
\end{equation*}
Hence, by Proposition \ref{prop:LS}, for any $\phi'\in \mathcal{D}(\partial E_0)$ solution to \eqref{eq:StatCH1}-\eqref{eq:StatCH3} and any $R>0$, there exist three constants $\theta\in \left( 0,\frac{1}{2}\right], C, \kappa>0$ such that $\widetilde{E}(\vv,\varphi)=E_{\text{kin}}(\vv,\varphi)+ \widetilde{E}_{\text{free}}(\varphi)$ satisfies
\begin{eqnarray}
\left| \widetilde{E}(\vv,\varphi)-\widetilde{E}(\mathbf{0},\phi') \right|^{1-\theta}&\leq& C\left( \left\| D\widetilde{E}_{\text{free}}(\varphi) \right\|_{{H^1_{(0)}(\Omega)'}}+ \|\vv\|_{L^2(\Omega)}\right)   \label{eq:LS2}
\end{eqnarray}
for all $\|\varphi-\phi'\|_{\Hone}\leq\kappa$,  and $\vv\in \L^2_\sigma(\Omega)$ with $\|\vv\|_{L^2(\Omega)}\leq R$. 

Since $\omega(\uu,\phi)$ is a compact subset of 
$\L^2_\sigma(\Omega)\times W^{2-\eps,p}(\Omega)$ for every $\eps>0$, and because of Lemma~\ref{lem:OmegaLimit}, for every $R>0$ there exist three universal constants $\theta\in \left( 0,\frac{1}{2}\right], C, \kappa>0$ such that \eqref{eq:LS2} holds for all $(\mathbf{0},\phi') \in \omega(\uu,\phi)$, $\varphi \in 
H_{(m)}^1 (\Omega)$ such that $\| \varphi-\phi'\|_{\Hone}\leq\kappa$, and $\vv\in \L^2_\sigma(\Omega)$ with $\|\vv\|_{L^2(\Omega)}\leq R$. Furthermore, by choosing $T_C\geq \max \lbrace T_{SP}, T_R \rbrace$ sufficiently large, we obtain $\dist((\uu(t),\phi(t)), \omega(\uu,\phi))\leq \kappa$ with respect to the norm of $\L^2_\sigma(\Omega)\times H^1(\Omega)$ for all $t\geq T_C$. Thus, we infer that
\begin{equation*}
\left| \widetilde{E}(\uu(t),\phi(t))-\widetilde{E}(\mathbf{0},\phi') \right|^{1-\theta}\leq C\left( \left\| D\widetilde{E}_{\text{free}}(\phi(t)) \right\|_{{H^1_{(0)}(\Omega)'}}+ \|\uu(t)\|_{L^2(\Omega)}\right), \ \forall \, t \geq T_C, \forall \, (\mathbf{0},\phi') \in \omega(\uu,\phi). 
\end{equation*}
On the other hand, since $\omega(\uu,\phi)$ is connected, $\widetilde{E}(\mathbf{0},\phi')$ is independent of $\phi'$ with $(\mathbf{0},\phi')\in \omega(\uu,\phi))$. Thus, we set such a value as $E_\infty$, and we conclude that
\begin{equation}
\label{LS4}
\left|\widetilde{E}(\uu(t),\phi(t))-E_\infty \right|^{1-\theta}\leq C\left( \left\| D\widetilde{E}_{\text{free}}(\phi(t)) \right\|_{H_{(0)}(\Omega)'}+ \|\uu(t)\|_{L^2(\Omega)}\right), \quad \forall \, t \geq T_C.
\end{equation}

Lastly, in order to prove the convergence as $t\to \infty$, we consider
\begin{equation*}
  H(t):= \left(E(\uu(t),\phi(t))-E_\infty\right)^\theta,
\end{equation*}
where $\theta$ is as in \eqref{LS4}. By the energy identity \eqref{EE-strong}, which holds due to Remark~\ref{rem:EnergyIdentity} and Theorem~\ref{MAIN} - (iii), $H(t)$ is non-increasing and
\begin{align*}
- \ddt H(t)
&= \theta \left( \|\nabla \mu(t)\|_{L^2(\Omega)}^2+\int_\Omega \nu(\phi(t))|D\uu(t)|^2 \, \d x\right) \left(E(\uu(t),\phi(t))-E_\infty\right)^{\theta-1}
  \\
& \geq \theta C^{-1} \left(\|\nabla \mu(t)\|_2^2+\int_\Omega \nu(\phi(t))|D\uu(t)|^2 \, \d x\right)\left( \left\| D\widetilde{E}_{\text{free}}(\phi(t)) \right\|_{{H^1_{(0)}(\Omega)'}} + \|\uu(t)\|_{L^2(\Omega)} \right)^{-1}
\end{align*}
for all $t\geq T_C$. 
Now we use that
\begin{equation*}
  D\widetilde{E}_{\text{free}}(\phi(t)) = -\Delta \phi(t) + P_0 \widetilde{\Psi}'(\phi(t)) =
 -\Delta \phi(t) + P_0 \Psi'(\phi(t)) = P_0\mu(t), \quad \forall \, t \geq T_C,
\end{equation*}
since $|\phi(t)|\leq 1-\delta$. In light of
$\|P_0\mu(t)\|_{{H^1_{(0)}(\Omega)'}}\leq C\|\nabla \mu(t)\|_{L^2(\Omega)}$ and $\|\uu(t)\|_{L^2(\Omega)}\leq C \|D\uu(t)\|_{L^2(\Omega)}$ by Korn's inequality,
$
- \ddt H(t) \geq C \left(\|\nabla \mu(t)\|_{L^2(\Omega)}+\|D\uu(t)\|_{L^2(\Omega)}\right),
$
for some positive constant $C$.
In turn, this implies that
\begin{equation*}
\int_{T_C}^\infty\|\nabla \mu(t)\|_{L^2(\Omega)}\, \d t 
+   \int_{T_C}^\infty\|D\uu(t)\|_{L^2(\Omega)}\, \d t \leq CH(T_C)<\infty.
\end{equation*}
Therefore, by \eqref{phit-est}, we deduce that
\begin{equation*}
\int_T^\infty  \|\partial_t \phi(t)\|_{H^1(\Omega)'} \, \d t 
\leq C H(T_C)<\infty,
\end{equation*}
which entails that $\partial_t \phi \in L^1(T_C,\infty;H^1(\Omega)')$.
Thus, we infer that
\begin{equation*}
\phi(t) = \phi(T_C)+ \int_{T_C}^t \partial_t \phi(\tau)\, \d \tau\to_{t\to\infty} \phi_\infty \qquad \text{in}\ H^1(\Omega)'.  
\end{equation*}
In particular, $\omega(\uu,\phi)= \{(0,\phi_\infty)\}$ and $\phi_\infty$ solves the stationary Cahn-Hilliard equation (\ref{eq:StatCH1}) -(\ref{eq:StatCH3}) thanks to Lemma~\ref{lem:OmegaLimit}.
Since $(\uu,\phi)\in BC( [T_R, \infty); \H_{0,\sigma}^1(\Omega) \times W^{2,p}(\Omega)$, for any $2\leq p<\infty$ if $d=2$ and $p=6$ if $d=3$, we conclude that $(\uu(t),\phi(t))$ converges weakly to
$(0,\phi_\infty)$ in $\H_{0,\sigma}^{1}(\Omega) \times W^{2,p}(\Omega)$.
This finally proves Theorem~\ref{MAIN} - (iv).
\end{proof}

\section{Global regularity for the AGG model in two dimensions}
\label{Reg-2D}
\setcounter{equation}{0}

In this section we prove the global well-posedness of the AGG model in any generic two dimensional bounded domain. We recall that the local well-posedness has been proven in \cite[Theorem 3.1]{Gior2021}. We show herein that the local strong solutions to system \eqref{AGG}-\eqref{AGG-bc} are in fact globally defined.

\begin{theorem}
\label{Glob-2D}
Let $\Omega$ be a bounded domain of class $C^3$ in $\mathbb{R}^2$. Assume that $\uu_0 \in \H^1_{0,\sigma}(\Omega)$ and $\phi_0 \in H^2(\Omega)$ such that $\| \phi_0\|_{L^\infty(\Omega)}\leq 1$, $|\overline{\phi_0}|<1$, $\mu_0= -\Delta \phi_0+ \Psi'(\phi_0) \in H^1(\Omega)$, and $\partial_\n \phi_0=0$ on $\partial \Omega$. Then, there exists a unique global strong solution $(\uu, P, \phi)$ to system \eqref{AGG}-\eqref{AGG-bc} defined on $\Omega \times [0,\infty)$ in the following sense:
\begin{itemize}
\item[(i)] The solution $(\uu, P, \phi)$ satisfies 
\begin{align}
\label{reg-SS}
\begin{split}
&\uu \in BC([0,\infty); \H^1_{0,\sigma}(\Omega)) 
\cap L^2(0,\infty;\H^2(\Omega))
\cap W^{1,2}(0,\infty;\L^2_\sigma(\Omega)), 
\\
& P \in L_{\uloc}^2([0,\infty);H^1(\Omega)),\\
&\phi \in L^\infty(0,\infty;H^3(\Omega)), \
\partial_t \phi \in L^\infty(0,\infty; H^1(\Omega)')
\cap L^2(0,\infty;H^1(\Omega)),\\
&\phi \in L^\infty(\Omega\times (0,\infty)) \text{ with } |\phi(x,t)|<1 \ \text{a.e. in } \  \Omega\times(0,\infty),\\
&\mu \in BC([0,\infty);H^1(\Omega))\cap L_{\uloc}^2([0,\infty);H^3(\Omega))\cap W_{\uloc}^{1,2}([0,\infty); H^1(\Omega)'), \\
&F'(\phi), F''(\phi), F'''(\phi) \in L^\infty(0,\infty;L^p(\Omega)), \ \forall \, p \in [1,\infty).
\end{split}
\end{align}

\item[(ii)] The solution $(\uu, P, \phi)$ fulfills the system \eqref{AGG} almost everywhere in $\Omega \times (0,\infty)$ and the boundary conditions $\partial_\n \phi=\partial_\n \mu=0$ almost everywhere in $\partial \Omega \times (0,\infty)$. 

\item[(iii)] The solution $(\uu, P, \phi)$ is such that $\uu(\cdot, 0)=\uu_0$ and $\phi(\cdot, 0)=\phi_0$ in $\Omega$.
\end{itemize}
\end{theorem}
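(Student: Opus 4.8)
The plan is to start from the local strong solution provided by \cite[Theorem 3.1]{Gior2021}, defined on a maximal interval $[0,T_{\max})$ and satisfying the regularity \eqref{reg-SS} on every compact subinterval, and to rule out finite-time blow-up by producing a priori bounds that are uniform in $T_{\max}$. Since the local solution has $\mu\in L^\infty(0,T_{\max};H^1(\Omega))$ and $\uu\in L^2(0,T_{\max};\H^2(\Omega))$, it obeys the energy identity of Remark~\ref{rem:EnergyIdentity}, which yields
\begin{equation*}
\|\uu\|_{L^\infty(0,T_{\max};\L^2_\sigma(\Omega))}^2 + \int_0^{T_{\max}} \|\uu(s)\|_{\H^1_{0,\sigma}(\Omega)}^2 + \|\nabla\mu(s)\|_{L^2(\Omega)}^2 \, \d s \leq C_0,
\end{equation*}
with $C_0$ depending only on $E(\uu_0,\phi_0)$ and $\Omega$, hence independent of $T_{\max}$. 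Uniqueness is immediate from the weak-strong uniqueness Theorem~\ref{weak-strong} (or from \cite[Theorem 3.1]{Gior2021}), so I shall concentrate on global existence.

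First I would regard $\uu$ as a divergence-free drift and invoke the Cahn--Hilliard theory. As $\uu\in L^2(0,T_{\max};\H^1_{0,\sigma}(\Omega))$ with norm controlled by $C_0$, Theorem~\ref{CH-strong} and Corollary~\ref{HREG-MU} provide bounds for $(\phi,\mu)$ that depend only on the data and on $\int_0^{T_{\max}}\|\nabla\uu\|_{L^2(\Omega)}^2\,\d s\leq C_0$, hence are uniform in $T_{\max}$: in particular $\phi\in L^\infty(0,T_{\max};W^{2,p}(\Omega))$, $\mu\in L^\infty(0,T_{\max};H^1(\Omega))$, $\nabla\partial_t\phi\in L^2(0,T_{\max};L^2(\Omega))$ and $\nabla\mu\in L^2(0,T_{\max};H^2(\Omega))$, for every $p<\infty$. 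Combining this with the instantaneous strict separation property available in two dimensions (cf.\ \cite{GiGrWu2018,Gior2021}) gives $\|\phi(t)\|_{L^\infty(\Omega)}\leq 1-\delta$ for $t\geq\tau>0$, so that $F'(\phi),F''(\phi),F'''(\phi)$ are bounded and elliptic regularity applied to $\mu=-\Delta\phi+\Psi'(\phi)$ upgrades $\phi$ to $L^\infty(\tau,\infty;H^3(\Omega))$, exactly as in the proof of Theorem~\ref{MAIN}--(iii).

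The core of the argument is a global-in-time estimate for the velocity. Writing the momentum equation as in \eqref{NS}, testing with $\partial_t\uu$ as in \eqref{v-test1}, and using the variable-viscosity Stokes estimate \eqref{v-H2} of \cite[Lemma 4]{Abels2009}, I would control the nonlinear terms with the two-dimensional Ladyzhenskaya and Gagliardo--Nirenberg inequalities. The decisive feature of $d=2$ is that
\begin{equation*}
\|(\uu\cdot\nabla)\uu\|_{L^2(\Omega)}^2 \leq \|\uu\|_{L^4(\Omega)}^2\|\nabla\uu\|_{L^4(\Omega)}^2 \leq \vartheta\,\|\uu\|_{H^2(\Omega)}^2 + C_\vartheta\,\|\uu\|_{L^2(\Omega)}^2\|\nabla\uu\|_{L^2(\Omega)}^4,
\end{equation*}
and similarly $\|(\nabla\mu\cdot\nabla)\uu\|_{L^2}^2$ and $\int_\Omega|\partial_t\phi|\,|D\uu|^2\,\d x$ split into $\vartheta\|\uu\|_{H^2}^2$ plus a factor $\|\nabla\uu\|_{L^2}^2$ times, respectively, $\|\nabla\mu\|_{L^2}^2\|\nabla\mu\|_{H^1}^2$ and $\|\nabla\partial_t\phi\|_{L^2}^2$. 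After absorbing the $\|\uu\|_{H^2}^2$ contributions, one reaches a differential inequality of the form
\begin{equation*}
\ddt E_1 + c\,\|\partial_t\uu\|_{L^2(\Omega)}^2 + c\,\|\uu\|_{H^2(\Omega)}^2 \leq A(t)\,E_1 + B(t), \qquad E_1(t):=\int_\Omega \tfrac{\nu(\phi)}{2}|D\uu|^2\,\d x,
\end{equation*}
with $A(t)=C\big(\|\nabla\uu\|_{L^2(\Omega)}^2+\|\nabla\mu\|_{H^2(\Omega)}^2+\|\nabla\partial_t\phi\|_{L^2(\Omega)}^2\big)$ and $B(t)=C\|\nabla\mu\|_{L^2(\Omega)}^2$. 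By the bounds of the previous paragraph together with the energy dissipation, both $A$ and $B$ lie in $L^1(0,T_{\max})$ with norms independent of $T_{\max}$. This is precisely where two dimensions differ from three: the bad multiplier $A(t)$ is controlled by $\|\nabla\uu\|_{L^2}^2\in L^1$ (the factor $\|\uu\|_{L^2}^2$ being absorbed into the energy bound), whereas in $d=3$ the analogous term scaled like $\|\nabla\uu\|_{L^2}^4$, which forced the smallness assumption of Section~\ref{Reg-3D}. The Gronwall lemma then yields a bound for $\sup_{0\leq t<T_{\max}}\|\uu(t)\|_{\H^1_{0,\sigma}(\Omega)}^2$ and for $\int_0^{T_{\max}}\|\partial_t\uu\|_{L^2(\Omega)}^2+\|\uu\|_{H^2(\Omega)}^2\,\d s$, independent of $T_{\max}$.

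Finally I would run the continuation argument. These uniform bounds give $\limsup_{t\to T_{\max}}\big(\|\uu\|_{C([0,t];\H^1_{0,\sigma})}+\|\uu\|_{L^2(0,t;\H^2)}+\|\partial_t\uu\|_{L^2(0,t;\L^2_\sigma)}\big)<\infty$, so if $T_{\max}<\infty$ one can define $\uu(T_{\max})\in\H^1_{0,\sigma}(\Omega)$ and, from the Cahn--Hilliard regularity, $\phi(T_{\max})\in H^2(\Omega)$ with $\mu(T_{\max})\in H^1(\Omega)$, $\partial_\n\phi(T_{\max})=0$ and $\|\phi(T_{\max})\|_{L^\infty(\Omega)}<1$; these satisfy the hypotheses of \cite[Theorem 3.1]{Gior2021}, and restarting the local solution past $T_{\max}$ contradicts maximality. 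Hence $T_{\max}=\infty$, and \eqref{reg-SS} follows by assembling the velocity and Cahn--Hilliard bounds, the property $\partial_t\phi\in L^\infty(0,\infty;H^1(\Omega)')$ coming from the last assertion of Theorem~\ref{CH-strong} since $\uu\in L^\infty(0,\infty;\L^2_\sigma(\Omega))$. I expect the main obstacle to be the precise derivation of the differential inequality with an $L^1$-in-time multiplier: one must check that every coefficient generated by the convective, flux $((\nabla\mu\cdot\nabla)\uu)$, and $\partial_t\phi$ terms is integrable uniformly in $T_{\max}$, which relies critically on the higher Cahn--Hilliard regularity $\nabla\mu\in L^2(0,\infty;H^2(\Omega))$ from Corollary~\ref{HREG-MU} and $\nabla\partial_t\phi\in L^2(0,\infty;L^2(\Omega))$ from Theorem~\ref{CH-strong}.
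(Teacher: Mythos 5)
Your proposal is correct and follows essentially the same route as the paper: local existence from \cite[Theorem 3.1]{Gior2021}, uniform Cahn--Hilliard bounds from Theorem~\ref{CH-strong} and Corollary~\ref{HREG-MU}, testing the non-conservative momentum equation with $\partial_t\uu$ combined with the variable-viscosity Stokes estimate, a Gronwall argument with an $L^1$-in-time multiplier (the decisive 2D feature), and a continuation argument. The only cosmetic deviation is that you invoke the instantaneous strict separation property to control $F''(\phi)$, $F'''(\phi)$, whereas the paper gets by with the $L^\infty(0,t;L^p(\Omega))$ bounds of \cite[Lemma A.6]{CG2020} and never needs separation in this proof.
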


\begin{proof}
Given an initial condition $(\uu_0, \phi_0)$ satisfying the required assumptions, the result in \cite[Theorem 3.1]{Gior2021} guarantees the existence and uniqueness of a local strong solution $(\uu, P, \phi)$ to system \eqref{AGG}-\eqref{AGG-bc} originating from $(\uu_0, \phi_0)$. We consider the maximal interval of existence $[0,T_{\rm{max}})$ of such solution. That is, the solution $(\uu, P, \phi)$ satisfies \eqref{reg-SS} in the interval $[0,T]$ for any $T<T_{\rm{max}}$, the system \eqref{AGG} almost everywhere in $\Omega \times (0,T_{\rm{max}})$ and the boundary conditions $\partial_\n \phi=\partial_\n \mu=0$ almost everywhere in $\partial \Omega \times (0,T_{\rm{max}})$. Furthermore, $\uu(0)=\uu_0$ and $\phi(0)=\phi_0$ in $\Omega$. Our goal is to show that $T_{\rm{max}}=\infty.$

We assume by contradiction that $T_{\rm{max}}<\infty$. First, integrating \eqref{AGG}$_3$ over $\Omega \times (0,t)$ with $t<T_{\rm{max}}$, we obtain
\begin{equation}
\label{CMphi}
\int_{\Omega} \phi(t) \, \d x= \int_{\Omega} \phi_0 \, \d x, \quad \forall \, t \in [0,T_{\rm{max}}).
\end{equation}
Multiplying \eqref{AGG}$_1$ and \eqref{AGG}$_3$ by $\uu$ and $\mu$, respectively, integrating over $\Omega$ and summing the resulting equation, we find the energy identity
$$
E(\uu(t), \phi(t))+ \int_0^t \int_{\Omega} \nu(\phi)|D \uu|^2 + |\nabla \mu|^2 \, \d x \, \d \tau= E(\uu_0,\phi_0), \quad \forall \, 0\leq t<T_{\rm{max}}.
$$
Since $E(\uu_0,\phi_0) <\infty$, we infer that, for all $0<t<T_{\rm{max}}$,
\begin{align}
\label{uL2H1-2D}
&\| \uu\|_{L^\infty(0,t;L^2(\Omega))} \leq \widetilde{C_0}, \quad  
 \| \uu\|_{L^2(0,t;H^1(\Omega))}\leq \widetilde{C_0},\\
\label{phiH1-2D}
&\| \phi\|_{L^\infty(0,t;H^1(\Omega))}\leq \widetilde{C_0}, \quad 
 \| \nabla \mu\|_{L^2(0,t;L^2(\Omega))}\leq \widetilde{C_0}.
\end{align}
Here the positive constant $\widetilde{C_0}$ depends on $E(\uu_0,\phi_0)$, but it is independent of $t$ and $T_{\rm{max}}$. In light of \eqref{uL2H1-2D}, owing to Theorem \eqref{CH-strong} and Corollary \ref{HREG-MU}, it follows that 
\begin{align}
\label{muH1-2D}
&\| \mu\|_{L^\infty(0,t;H^1(\Omega))}\leq \widetilde{C_1}, \quad 
 \| \nabla \mu\|_{L^2(0,t;H^2(\Omega))}\leq \widetilde{C_1},\\
\label{phitH1-2D}
&\| \partial_t \phi\|_{L^\infty(0,t;H^1(\Omega)')}\leq \widetilde{C_1}, \quad 
 \| \partial_t \phi\|_{L^2(0,t;H^1(\Omega))}\leq \widetilde{C_1},\\
 \label{phiW2p-2D}
 &\| \phi\|_{L^\infty(0,t;W^{2,p}(\Omega))}\leq \widetilde{C_1}(p), \quad 
 \|F'(\phi)\|_{L^\infty(0,t;L^p(\Omega))}\leq \widetilde{C_1}(p),
\end{align}
for all $0<t<T_{\rm{max}}$ and $2\leq p<\infty$, where the positive constants $\widetilde{C_1}$, $\widetilde{C_1}(p)$ are independent of $T_{\rm{\max}}$. Furthermore, by \cite[Lemma A.6]{CG2020} (see also\cite{MZ}) and \eqref{muH1-2D}-\eqref{phiW2p-2D}, we get
\begin{equation}
\label{F2-F3-2D}
\| F''(\phi) \|_{L^\infty(0,t; L^p(\Omega))}
+ \| F'''(\phi) \|_{L^\infty(0,t; L^p(\Omega))}\leq \widetilde{C_2}(p),
\end{equation}
for $p$ as above and some $\widetilde{C_2}(p)$ independent of $T_{\rm{max}}$.

Next, exploiting \eqref{AGG}$_3$, we rewrite \eqref{AGG}$_1$ in non-conservative form (cf. \eqref{NS}) as follows
\begin{equation}
\label{NS-nc}
\rho(\phi) \partial_t \uu + \rho(\phi) (\uu \cdot \nabla)\uu
-\rho'(\phi) (\nabla \mu\cdot \nabla) \uu - \div (\nu(\phi)D\uu) + \nabla P^\ast= - \phi \nabla \mu.
\end{equation}
Multiplying \eqref{NS-nc} by $\partial_t \uu$ and integrating over $\Omega$, we obtain
\begin{equation}
\label{NS-test1}
\begin{split}
\ddt \int_{\Omega} \frac{\nu(\phi)}{2} | D \uu|^2 \, \d x + \rho_\ast \| \partial_t \uu\|_{L^2(\Omega)}^2 
& \leq \left| \int_{\Omega} \rho(\phi) (\uu \cdot \nabla)\uu \cdot \partial_t \uu \, \d x\right|
+ \left| \int_{\Omega} \rho'(\phi) (\nabla \mu\cdot \nabla) \uu \cdot \partial_t \uu\, \d x \right| 
\\
&\quad +\left|  \int_{\Omega} \nu'(\phi) \partial_t \phi \, |D \uu|^2 \, \d x \right| 
+ \left| \int_{\Omega} - \phi \nabla \mu \cdot \partial_t \uu \, \d x \right|.
\end{split}
\end{equation}
Using the Ladyzhenskaya inequality, the Korn inequality and  \eqref{uL2H1-2D}, we find
\begin{equation}
\label{T1}
\begin{split}
 \left| \int_{\Omega} \rho(\phi) (\uu \cdot \nabla)\uu \cdot \partial_t \uu\, \d x\right|
 &\leq \rho^\ast \| \uu\|_{L^4(\Omega)} \| \nabla \uu\|_{L^4(\Omega)} \| \partial_t \uu\|_{L^2(\Omega)}\\
 &\leq C \widetilde{C_0}^\frac12 \| \nabla \uu\|_{L^2(\Omega)} \|  \uu\|_{H^2(\Omega)}^\frac12 \| \partial_t \uu\|_{L^2(\Omega)}\\
 &\leq \frac{\rho_\ast}{6} \| \partial_t \uu\|_{L^2(\Omega)}^2 
 + \varpi \| \uu\|_{H^2(\Omega)}^2 
 + \widetilde{C} \| D \uu\|_{L^2(\Omega)}^4,
\end{split}
\end{equation}
where $\widetilde{C}$ stands for a generic positive constant, whose value may change from line to line, which depends on the norms of the initial data, the parameters of the system and $\varpi>0$. The (small) value of $\varpi$ will be chosen subsequently. 
Similarly, we have
\begin{equation}
\label{T2}
\begin{split}
\left|  \int_{\Omega} \nu'(\phi) \partial_t \phi \, |D \uu|^2 \, \d x \right| 
& \leq  
C \| \partial_t \phi\|_{L^2(\Omega)} \| D \uu\|_{L^4(\Omega)}^2 \\
& \leq  
C \| \nabla \partial_t \phi\|_{L^2(\Omega)} \| D \uu\|_{L^2(\Omega)} \| \uu\|_{H^2(\Omega)} \\
&\leq \varpi \| \uu\|_{H^2(\Omega)}^2 
 + \widetilde{C} \| \nabla \partial_t \phi\|_{L^2(\Omega)}^2 \| D \uu\|_{L^2(\Omega)}^2.
\end{split}
\end{equation}
Besides, by the Sobolev embedding and \eqref{phiW2p-2D}
\begin{equation}
\label{T3}
\begin{split}
\left| \int_{\Omega} \rho'(\phi) (\nabla \mu\cdot \nabla) \uu \cdot \partial_t \uu\, \d x \right| 
& \leq \left| \frac{\rho_1-\rho_2}{2} \right| \| \nabla \mu\|_{L^\infty(\Omega)} \| \nabla \uu\|_{L^2(\Omega)} \| \partial_t \uu\|_{L^2(\Omega)}\\
&\leq \frac{\rho_\ast}{6} \| \partial_t \uu\|_{L^2(\Omega)}^2 
 + \widetilde{C} \| \nabla \mu\|_{H^2(\Omega)}^2 \| \nabla \uu\|_{L^2(\Omega)}^2,
\end{split}
\end{equation}
and 
\begin{equation}
\label{T4}
\begin{split}
\left| \int_{\Omega} -\phi \nabla \mu \cdot \partial_t \uu \, \d x \right|
&\leq  \| \phi\|_{L^\infty(\Omega)} \| \nabla \phi\|_{L^2(\Omega)}  \| \partial_t \uu\|_{L^2(\Omega)} \\
&\leq \frac{\rho_\ast}{6} \| \partial_t \uu\|_{L^2(\Omega)}^2 
+ \widetilde{C} \| \nabla \mu\|_{L^2(\Omega)}^2.
\end{split}
\end{equation}
On the other hand, exploiting the regularity theory of the Stokes equation with concentration-depending viscosity in \cite[Lemma 4]{Abels2009} and owing to \eqref{phiW2p-2D}, we infer that
\begin{equation}
\begin{split}
\| \uu\|_{H^2(\Omega)}^2
&\leq \widetilde{C} \left( \| \rho(\phi) \partial_t \uu \|_{L^2(\Omega)}^2
+\| \rho(\phi) (\uu \cdot \nabla)\uu\|_{L^2(\Omega)}^2 \right.
\\
& \qquad \quad \left. + \| \rho'(\phi) (\nabla \mu\cdot \nabla) \uu \|_{L^2(\Omega)}^2
+\| \phi \nabla \mu\|_{L^2(\Omega)}^2 \right).
\end{split}
\end{equation}
Arguing in a similar way as above, we deduce that
\begin{equation}
\begin{split}
\| \uu\|_{H^2(\Omega)}^2
&\leq \widetilde{C} \bigg( \rho^\ast \| \partial_t \uu \|_{L^2(\Omega)}^2
+\rho^\ast \| \uu \|_{L^4(\Omega)}^2 \| \nabla \uu\|_{L^4(\Omega)}^2 \bigg.\\
&\quad \bigg. + \left| \frac{\rho_1-\rho_2}{2}\right|\| \nabla \mu \|_{L^\infty(\Omega)}^2 \| \nabla \uu \|_{L^2(\Omega)}^2
+\| \phi\|_{L^\infty}^2 \| \nabla \mu\|_{L^2(\Omega)}^2 \bigg)\\
& \leq \widetilde{C} \left( \rho^\ast \| \partial_t \uu \|_{L^2(\Omega)}^2
+\rho^\ast \widetilde{C_0} C \| \nabla \uu\|_{L^2(\Omega)}^2 \| \uu\|_{H^2(\Omega)} \right.\\
&\quad \left. + \left| \frac{\rho_1-\rho_2}{2}\right|\| \nabla \mu \|_{H^2(\Omega)}^2 \| \nabla \uu \|_{L^2(\Omega)}^2
+ \| \nabla \mu\|_{L^2(\Omega)}^2  \right).
\end{split}
\end{equation}
Thus, using the Young inequality, we arrive at
\begin{equation}
\label{T5}
\begin{split}
\| \uu\|_{H^2(\Omega)}^2
& \leq \widetilde{C}  \| \partial_t \uu \|_{L^2(\Omega)}^2
+ \widetilde{C} \| \nabla \uu\|_{L^2(\Omega)}^4 
+ \widetilde{C} \| \nabla \mu \|_{H^2(\Omega)}^2 
\| \nabla \uu \|_{L^2(\Omega)}^2
+ \widetilde{C}\|\nabla \mu\|_{L^2(\Omega)}^2.
\end{split}
\end{equation}
Now, combining \eqref{NS-test1} with \eqref{T1}-\eqref{T4}, summing the resulting equation by $3 \varpi \times$\eqref{T5}, and setting $\varpi=\frac{\rho_\ast}{12 \widetilde{C}}$, we eventually reach the differential inequality
\begin{equation*}
 \ddt \left( \frac12 \int_{\Omega} \nu(\phi) |D \uu|^2 \, \d x\right)
+ \frac{\rho_\ast}{4} \|\partial_t \uu\|_{L^2(\Omega)}^2 
+ \frac{\rho_\ast}{12 \widetilde{C}} \| \uu\|_{H^2(\Omega)}^2
\leq G(t)
\left( \frac12 \int_{\Omega} \nu(\phi) |D \uu|^2 \, \d x\right) +
\widetilde{C} \|\nabla \mu \|_{L^2(\Omega)}^2,
\end{equation*}
where
$$
G(t)= \widetilde{C} \left( \| D \uu(t)\|_{L^2(\Omega)}^2+ \| \nabla \partial_t \phi(t)\|_{L^2(\Omega)}^2
+ \| \nabla \mu(t) \|_{H^2(\Omega)}^2 \right).
$$
The Gronwall lemma yields
\begin{equation}
\label{uH1-2D}
\begin{split}
&\max_{\tau \in[0,t]} \| \uu(\tau)\|_{H^1(\Omega)}^2
 + \int_0^{t}\|\partial_t \uu(\tau)\|_{L^2(\Omega)}^2 
+ \| \uu (\tau)\|_{H^2(\Omega)}^2 \, \d \tau\\
&\leq \widetilde{C}
\left( \| \uu_0\|_{H^1(\Omega)}^2 + \int_0^t \| \nabla \mu(s)\|_{L^2(\Omega)}^2 \right) \left( 1+ \int_0^t G(s) \, \d s\right) \mathrm{e}^{\int_0^t G(s) \, \d s}=: \widetilde{G}(t),
 \quad \forall \, t <T_{\rm{max}}.
\end{split}
\end{equation}
In light of \eqref{uL2H1-2D}, \eqref{muH1-2D} and \eqref{phitH1-2D},
$\widetilde{G} \in L^\infty(0,T_{\rm{max}})$.
On the other hand,
by the assumption $T_{\rm{max}}< \infty$, it is easily seen that
$$
\limsup_{t\rightarrow T_{\rm{max}}^-} \left( \| \uu(t)\|_{\H_\sigma^1} +  \| \mu(t)\|_{H^1(\Omega)}\right)= \infty.
$$
Otherwise, the solution could be extended beyond the time 
$T_{\rm{max}}$ thanks to \cite[Theorem 3.1]{Gior2021}. This contradicts \eqref{muH1-2D} and \eqref{uH1-2D}.
Thus, we conclude that $T_{\rm{max}}=\infty$, and the solution $(\uu, P, \phi)$ exists on $[0,\infty)$. In particular, since the estimates \eqref{uL2H1-2D}-\eqref{F2-F3-2D} holds on $[0,\infty)$, it follows that $\| \widetilde{G}\|_{L^\infty(0,\infty)}< \infty$ and thereby
$$
\uu \in BC([0,\infty); \H^1_{0,\sigma}(\Omega)) 
\cap L^2(0,\infty;\H^2(\Omega))
\cap W^{1,2}(0,\infty;\L^2_\sigma(\Omega)).
$$
In addition, since $F''(\phi) \in L^\infty(0,\infty; L^p(\Omega))$ for any $2\leq p < \infty$, by comparison with terms in \eqref{AGG}$_4$, we also deduce that $\partial \mu \in L_{\uloc}^2([0,\infty); H^1(\Omega)')$, which, in turn, implies that $\mu \in BC([0,\infty); H^1(\Omega))$.
\end{proof}

\section{Double obstacle potential: the limit $\theta \searrow 0$}
\label{obstacle}
\setcounter{equation}{0}

In this final section we study the double obstacle version of the system \eqref{AGG}-\eqref{AGG-bc} which is obtained by passing to the limit
$\theta \searrow 0$ in the Flory--Huggins potential $\Psi$, cf.  \eqref{Log}.
The limiting free energy is then given by 
$$
E^{\text{do}}_{\text{free}}(\phi)= 
 \int_{\Omega} \frac12 |\nabla \phi|^2 + I_{[-1,1]} (\phi) - \frac{\theta_0}2 \phi^2   \, \d x.
 $$
 Here $I_{[-1,1]}$ is the indicator function of the interval $[-1,1]$ given by
 \begin{equation*}
 I_{[-1,1]} (s) 	=
 	\begin{cases} 0&\mbox{ if } s \in[-1,1] \,,\\
 		\infty &\mbox{ if } s\notin  [-1,1] \,.
 	\end{cases}
 \end{equation*}
In this case equation \eqref{AGG}$_4$ has to be replaced by
\begin{equation}
	\label{DOincl}
\mu+\Delta \phi+\theta_0 \phi  \in \partial I_{[-1,1]}(\phi)
\end{equation}
almost everywhere in $\Omega \times (0,\infty)$ where $\partial I_{[-1,1]}$ is the subdifferential of
$I_{[-1,1]}$.
The inclusion \eqref{DOincl}  is equivalent to the variational inequality
$$ -\left( \mu, \zeta-\phi \right) +\left( \nabla \phi, \nabla \zeta-\nabla \phi\right) -\theta_0\left( \phi, \zeta- \phi\right) \geq 0$$
which has to hold for almost all $t$ and all $\zeta \in H^1(\Omega)$ which fulfill $|\zeta(x)|\leq 1$ almost everywhere in $\Omega$, see
\cite{BE1991}.

We first state  a result on the double obstacle limit for the convective Cahn-Hilliard equation which has been shown in
\cite[Theorem 3.10]{
Abels2011}.
\begin{theorem}[Double obstacle limit of the convective Cahn-Hilliard equation]
	\label{BasicThmDO}
	 Let $\Omega$ be a bounded domain in $\mathbb{R}^d$, $d=2,3$, with $C^2$ boundary and  $0<\theta_k\leq 1$, $ k\in \mathbb{N}$, be such that $\lim_{k\rightarrow \infty}\theta_k = 0$. Moreover, assume
	$ \phi_0, \phi_{0,k} \in H^1(\Omega)$ with $\| \phi_{0,k}\|_{L^\infty(\Omega)}\leq 1$ and $
		\sup_{k\in \mathbb{N}} \left| \overline{\phi_{0,k}} \right|<1$, and  $\uu, \uu_k \in  L^2(0,\infty;\H^1_{0,\sigma}(\Omega))$ be such that as $k$ tends to infinity
	$$ 
	\phi_{0,k} 
	\rightarrow \phi_0 \quad \text{in} \quad H^1(\Omega)
	\quad \text{and} \quad
	\uu_k \rightarrow  \uu \quad 
	\text{in}\quad L^2(0,T;\H^1_{0,\sigma}(\Omega))
	 $$ for all $T\in (0,\infty)$. Furthermore, let $(\phi_k, \mu_k)$ be the sequence of weak solutions to \eqref{CH}-\eqref{bcic} with $(\uu,\phi_0, F)$ replaced by  $(\uu_k,\phi_{0,k}, F_k)$ where $F$ is defined with $\theta_k$ instead of $\theta$.
	Then, it holds in the limit  $k$ tending to $\infty$ 
	\begin{equation}
		\label{obstacle-limit}
		\begin{split}
			\begin{aligned}
				\phi_{k} &\rightharpoonup \phi \quad  &&\text{weakly in } L^2(0,T; W^{2,p}(\Omega)),\\
				\nabla \mu_{k} & \rightharpoonup \nabla \mu \quad  &&\text{weakly  in } L^2(0,T;L^2(\Omega)),\\
				F_k'(\phi_k)&\rightharpoonup F^* \quad  &&\text{weakly in } 
				L^{2}(0,T;L^p(\Omega)),
			\end{aligned}
		\end{split}
	\end{equation}
\noindent for all $T\in(0,\infty)$, where $F^* = \mu +\Delta \phi +\theta_c \phi \in \partial  I_{[-1,1]} (\phi)$  almost 
everywhere in 
$\Omega \times (0,\infty)$, $p=6$ if $d=3$, $p\in [2,\infty)$ is arbitrary if $d=2$ 
and $(\phi,\mu)\in
C( [0,\infty); H^1(\Omega))\cap L_{\uloc}^4([0,\infty);H^2(\Omega))\cap L_{\uloc}^2([0,\infty);W^{2,p}(\Omega))\times 
L_{\uloc}^2([0,\infty);H^1(\Omega))$ is the unique weak solution of 
\begin{equation}
	\begin{cases}
		\label{CHDO}
		\partial_t \phi+ \uu \cdot \nabla \phi = \Delta \mu \\
		\mu+\Delta \phi+\theta_0 \phi \in \partial I_{[-1,1]}(\phi)
	\end{cases}
	\quad \text{ in } \Omega\times (0,\infty)
\end{equation}
completed  with the following boundary and initial conditions
\begin{equation}
	\begin{cases}
		 \partial_\n\phi=\partial_\n \mu=0 \quad &\text{on }  \partial \Omega \times (0,T),\\
		\phi(\cdot, 0)=\phi_0 \quad &\text{in } \Omega.
	\end{cases}
\end{equation}
The weak solution satisfies the free energy equality
\begin{equation}
	\label{EI-do}
	E_{\mathrm{free}}^{\mathrm{do}}(\phi(t))+ \int_\tau^t 
	\| \nabla \mu(s)\|_{L^2(\Omega)}^2 \, \d s =  E_{\mathrm{free}}^{\mathrm{do}}(\phi(\tau)) - \int_{\tau}^t (\uu \cdot \nabla \phi, \mu) \, \d s,
\end{equation}
for every $0\leq \tau< t \leq \infty$. 
\end{theorem}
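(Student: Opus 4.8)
The plan is to realize the double-obstacle solution as the limit of the Flory--Huggins solutions $(\phi_k,\mu_k)$: first derive a priori bounds that are \emph{uniform in $k$} (equivalently, in $\theta_k$), then extract weakly and strongly convergent subsequences, pass to the limit in the weak formulation, and — the crucial point — identify the weak limit $F^\ast$ of the singular term $F_k'(\phi_k)$ as a selection of the subdifferential $\partial I_{[-1,1]}(\phi)$. The decisive structural input is that the constants $C$, $C_p$ in the energy estimates of Theorem~\ref{well-pos} are independent of $\theta$; this is exactly what makes the limit $\theta_k\searrow 0$ accessible.

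First I would collect the uniform estimates. Since $\phi_{0,k}\to\phi_0$ in $H^1(\Omega)$ with $\|\phi_{0,k}\|_{L^\infty(\Omega)}\le 1$, $\sup_k|\overline{\phi_{0,k}}|<1$, and since $F_k=\theta_k G$ with $G$ bounded on $[-1,1]$, the free energies $E_{\mathrm{free}}(\phi_{0,k})$ are bounded uniformly in $k$; together with the boundedness of $\|\uu_k\|_{L^2(Q)}$ coming from $\uu_k\to\uu$ in $L^2(0,T;\H^1_{0,\sigma}(\Omega))$, Theorem~\ref{well-pos} yields $k$-uniform bounds for $\phi_k$ in $L^\infty(0,\infty;H^1(\Omega))\cap L^2_{\uloc}([0,\infty);W^{2,p}(\Omega))$, for $\partial_t\phi_k$ in $L^2(0,\infty;H^1(\Omega)')$, for $\nabla\mu_k$ in $L^2(Q)$, and for $F_k'(\phi_k)$ in $L^2_{\uloc}([0,\infty);L^p(\Omega))$. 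Controlling $\overline{\mu_k}$ through the $L^1$-estimate on $F_k'(\phi_k)$ (as in the derivation of \eqref{F'-L1e}--\eqref{mu-H1e}, whose constants are again $\theta$-uniform) upgrades $\nabla\mu_k$ to a bound on $\mu_k$ in $L^2_{\uloc}([0,\infty);H^1(\Omega))$. Banach--Alaoglu then gives a subsequence realizing the three weak convergences in \eqref{obstacle-limit}, and the Aubin--Lions lemma (using $W^{2,p}\hookrightarrow\hookrightarrow H^1\hookrightarrow H^1(\Omega)'$) yields $\phi_k\to\phi$ strongly in $L^2(0,T;H^1(\Omega))$ and a.e. in $Q$, for every $T$; the a.e. limit forces $|\phi|\le 1$, i.e. $I_{[-1,1]}(\phi)=0$.

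Passing to the limit in the weak formulation \eqref{e2} is routine for the diffusion term and for the transport term $(\uu_k\cdot\nabla\phi_k,v)$, where the strong convergence of $\uu_k$ in $L^2(0,T;\L^6(\Omega))$ and of $\nabla\phi_k$ in $L^2(Q)$ suffices. The heart of the argument, and the step I expect to be the main obstacle, is the identification $F^\ast=\mu+\Delta\phi+\theta_0\phi\in\partial I_{[-1,1]}(\phi)$ a.e. Writing $F_k'(\phi_k)=\mu_k+\Delta\phi_k+\theta_0\phi_k$ and passing to the limit identifies $F^\ast$ as the weak limit; it then remains to show $F^\ast$ lies in the limiting maximal monotone graph. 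For this I would use that $F_k=\theta_k G\to I_{[-1,1]}$ in the sense of Mosco convergence of convex functions, which forces graph (resolvent) convergence of the subdifferentials $F_k'=\partial F_k$ to $\partial I_{[-1,1]}$. Combined with the inequality
\begin{equation*}
\limsup_{k\to\infty}\int_0^T\!\!\int_\Omega F_k'(\phi_k)\,\phi_k\,\d x\,\d t \le \int_0^T\!\!\int_\Omega F^\ast\,\phi\,\d x\,\d t,
\end{equation*}
which here is in fact an equality because $\phi_k\to\phi$ strongly while $F_k'(\phi_k)\rightharpoonup F^\ast$ weakly in $L^2(Q)$ (strong times weak), the closedness of maximal monotone operators under graph convergence yields $F^\ast\in\partial I_{[-1,1]}(\phi)$, equivalently the inclusion \eqref{DOincl}.

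It remains to establish the free energy equality \eqref{EI-do} and the uniqueness of the limit. For the energy identity I would first pass to the limit in \eqref{EI} for $(\phi_k,\mu_k)$: the gradient energy and the dissipation $\int\|\nabla\mu_k\|_{L^2(\Omega)}^2$ are handled by weak lower semicontinuity, the mixed term $(\uu_k\cdot\nabla\phi_k,\mu_k)$ by the convergences above, and the singular potential contributes nothing since $0\le\int_\Omega F_k(\phi_k)\,\d x\le \theta_k\,C\to 0$ uniformly in time; the resulting inequality is promoted to the equality \eqref{EI-do} by the chain rule for the convex functional $E^{\mathrm{do}}_{\mathrm{free}}$ along the solution, justified by the regularity $\phi\in L^2_{\uloc}([0,\infty);W^{2,p}(\Omega))$, $\partial_t\phi\in L^2(0,\infty;H^1(\Omega)')$ and $\mu\in L^2_{\uloc}([0,\infty);H^1(\Omega))$. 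Finally, uniqueness of the weak solution to \eqref{CHDO} follows from a standard monotonicity estimate — subtracting two solutions with the same data, testing the (mean-zero) difference against the inverse Neumann Laplacian, and exploiting the monotonicity of $\partial I_{[-1,1]}$ together with the $L^2(\H^1_{0,\sigma})$-control of $\uu$ — so that the whole sequence converges and $(\phi,\mu)$ is the unique limit claimed.
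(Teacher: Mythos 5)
The paper does not actually prove Theorem~\ref{BasicThmDO}: it is quoted from \cite[Theorem 3.10]{Abels2011}, with only the remark that the variable mean values $\overline{\phi_{0,k}}$ do not affect the arguments there. So there is no in-paper proof to compare against; your proposal is, in effect, a reconstruction of the argument behind the cited result, and it follows the standard and correct route. The key structural observations are all in place: the $\theta$-independence of the constants in Theorem~\ref{well-pos} (and in \eqref{F'-L1e}--\eqref{mu-H1e}, after the normalization trick $\theta\le 1$ noted in the paper) gives the $k$-uniform bounds; Aubin--Lions gives strong convergence of $\phi_k$ and hence $|\phi|\le 1$; and the identification $F^\ast\in\partial I_{[-1,1]}(\phi)$ via the strong-times-weak convergence of $\int F_k'(\phi_k)\phi_k$ together with graph (Mosco) convergence of $\theta_k F_0'$ to $\partial I_{[-1,1]}$ is exactly the maximal-monotonicity argument one expects; an equivalent, slightly more elementary variant is to use convexity, $F_k'(\phi_k)(\phi_k-\zeta)\ge F_k(\phi_k)-F_k(\zeta)\ge -\theta_k\log 2$, for arbitrary $\zeta$ with $|\zeta|\le1$ and localize.

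Two points deserve more care than your sketch gives them. First, the free energy \emph{equality} \eqref{EI-do}: weak lower semicontinuity in \eqref{EI} only yields an inequality, and the upgrade to equality rests on the chain rule $\frac{\d}{\d t}E^{\mathrm{do}}_{\mathrm{free}}(\phi)=\langle\partial_t\phi,\mu\rangle$ for the nonsmooth convex functional along a path with $\partial_t\phi\in L^2(0,\infty;H^1(\Omega)')$ and $\mu\in L^2_{\uloc}([0,\infty);H^1(\Omega))$; this is a genuine (Brezis-type) lemma for subgradient flows, not a formal computation, and should be invoked explicitly rather than asserted. Second, a cosmetic mismatch: your route runs through Theorem~\ref{well-pos}, which is stated for $C^3$ boundaries, whereas Theorem~\ref{BasicThmDO} assumes only $C^2$; this is an artifact of the paper's hypotheses rather than a flaw in your reasoning, but as written your proof establishes the result under the stronger boundary regularity.
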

The formulation in \cite[Theorem 3.10]{
	Abels2011} is slightly different. However, e.g., the variable mean values $\overline{\phi_{0,k}}$ do not effect the arguments substantially.

We  now formulate our result stating the higher regularity for the  Cahn-Hilliard equation with divergence-free drift in the double obstacle case.
\begin{theorem}
\label{CH-strong-do}
Let $\Omega$ be a bounded domain in $\mathbb{R}^d$, $d=2,3$, with $C^3$ boundary and the initial condition $\phi_0 \in H^2(\Omega)$ be such that 
$\| \phi_0\|_{L^\infty(\Omega)}\leq 1$, $\left| \overline{\phi_0} \right|<1$.
Furthermore, we assume that a function $\mu_0\in H^1(\Omega)$ exists such that
\begin{equation}
\label{init-do}
\mu_0+\theta_0 \phi_0 +\Delta \phi_0 \in  \partial I_{[-1,1]}(\phi_0)  \quad \text{almost everywhere in } \Omega
\end{equation}
and 
$\partial_\n \phi_0=0$ on $\partial \Omega$. 
Assume that $\uu \in L^2(0,\infty;\H^1_{0,\sigma}(\Omega))$.
Then, there exists a unique global  solution to 
\begin{equation}
\begin{cases}
	\label{CHDO-reg}
		\partial_t \phi+ \uu \cdot \nabla \phi = \Delta \mu \\
			\mu+\Delta \phi+\theta_0 \phi \in \partial I_{[-1,1]}(\phi)
\end{cases}
\quad \text{ almost everywhere in } \Omega\times (0,\infty),
\end{equation}
such that 
\begin{equation}
\label{REGdo}
\begin{split}
&\phi \in L^\infty(0,\infty;W^{2,p}(\Omega)), \quad \partial_t \phi \in L^2(0,\infty;H^1(\Omega)),\\
&\phi \in L^{\infty}(\Omega\times (0,\infty)) \text{ with }  |\phi(x,t)|\leq 1 
\ \text{a.e. in } \Omega\times (0,\infty),\\
&\mu \in L^{\infty}(0,\infty; H^1(\Omega))\cap L_{\uloc}^2([0,\infty);H^3(\Omega)),
		\end{split}
	\end{equation}
	for any $2\leq p <\infty$ if $d=2$ and $p=6$ if $d=3$. The solution satisfies \eqref{CHDO-reg} almost everywhere in $\Omega \times (0,\infty)$ and  $\partial_\n \mu=0$ almost everywhere on 
	$\partial\Omega\times(0,\infty)$. Moreover, there exists a positive constant $C$ depending only on $\Omega$,  $\theta_0$,
	and $\overline{\phi_0}$ such that
\begin{equation}
\label{N-mu-do}
\begin{split}
\| \nabla \mu \|_{L^\infty(0,\infty;L^2(\Omega))}
			& \leq \left(  4 \| \nabla \mu_0\|_{L^2(\Omega)}^2+  4C \int_0^\infty \| \nabla \uu(s)\|_{L^2(\Omega)}^2 \, \d s\right)^\frac12 
			\\
&\quad \times\mathrm{exp}\left( C \int_0^\infty \| \nabla \uu(s)\|_{L^2(\Omega)}^2 \, \d s \right),
		\end{split}
	\end{equation}
	and
	\begin{equation}
		\label{N-phit-do}
		\begin{split}
			\int_0^\infty \| \nabla \partial_t \phi(s)\|_{L^2(\Omega)}^2 \, \d s
			&\leq 
			6 \left(  \| \nabla \mu_0\|_{L^2(\Omega)}^2+ C \int_0^\infty \| \nabla \uu(s)\|_{L^2(\Omega)}^2 \, \d s\right) \\
			&\quad \times \left( 1+\left( \int_0^\infty \| \nabla \uu(s)\|_{L^2(\Omega)}^2 \, \d s \right) \ \mathrm{exp}\left( 2C \int_0^\infty \| \nabla \uu(s)\|_{L^2(\Omega)}^2 \, \d s \right) \right). 
		\end{split}
	\end{equation}
In addition, if $\uu \in  L^\infty(0, \infty;\L^2_\sigma(\Omega))\cap L^2(0,\infty;\H^1_{0,\sigma}(\Omega))$, then $\partial_t \phi \in L^\infty(0,\infty; H^1(\Omega)')$.
\end{theorem}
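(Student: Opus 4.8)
The strategy is to obtain \eqref{CHDO-reg} as the double obstacle limit $\theta_k\searrow 0$ of the Flory--Huggins problems \eqref{CH}--\eqref{bcic} and to inherit the quantitative bounds of Theorem \ref{CH-strong}, whose constant $C$ stays bounded as $\theta$ ranges in a bounded interval. Fix $\theta_k\in(0,1]$ with $\theta_k\to 0$ and let $F_k$, $\Psi_k(s)=F_k(s)-\tfrac{\theta_0}{2}s^2$ be the logarithmic potential with parameter $\theta_k$. For each $k$ I would construct suitable initial data $\phi_{0,k}$, invoke Theorem \ref{CH-strong} with $\theta=\theta_k$ to produce a strong solution $(\phi_k,\mu_k)$ of \eqref{CH}--\eqref{bcic} satisfying \eqref{N-mu}, \eqref{N-phit} and the bounds of Corollary \ref{HREG-MU} with a constant \emph{independent of $k$}, and finally let $k\to\infty$.

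The main step, and the one I expect to be the principal obstacle, is the construction of the approximating data. The assumption \eqref{init-do} says that $\xi_0:=\mu_0+\Delta\phi_0+\theta_0\phi_0\in\partial I_{[-1,1]}(\phi_0)$, i.e. $\phi_0$ solves the stationary inclusion $-\Delta\phi_0-\theta_0\phi_0+\partial I_{[-1,1]}(\phi_0)\ni\mu_0$ with $\partial_\n\phi_0=0$. I would define $\phi_{0,k}$ as the solution of the regularized stationary Neumann problem
\[
-\Delta \phi_{0,k}+F_k'(\phi_{0,k})-\theta_0\phi_{0,k}=\mu_0 \quad\text{in }\Omega,\qquad \partial_\n\phi_{0,k}=0\ \text{ on }\partial\Omega,
\]
whose solvability and $H^2$-regularity follow from the maximal monotonicity of $\phi\mapsto-\Delta\phi+F_k'(\phi)$ (the logarithmic term forcing $\|\phi_{0,k}\|_{L^\infty}\le 1$) together with elliptic regularity; integrating the equation shows $|\overline{\phi_{0,k}}|<1$ with $\overline{\phi_{0,k}}\to\overline{\phi_0}$. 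By construction $\mu_{0,k}:=-\Delta\phi_{0,k}+\Psi_k'(\phi_{0,k})=\mu_0\in H^1(\Omega)$, so the right-hand sides of \eqref{N-mu}, \eqref{N-phit} are controlled by $\|\nabla\mu_0\|_{L^2(\Omega)}$ exactly. The delicate point is to prove $\phi_{0,k}\to\phi_0$ strongly in $H^1(\Omega)$ (and weakly in $H^2(\Omega)$): this is a convergence-of-resolvents statement for the graphs $F_k'\to\partial I_{[-1,1]}$, which I would establish by a monotone/Mosco convergence argument, testing with $\phi_{0,k}-\phi_0$, with the subtlety that the anti-monotone term $-\theta_0\phi$ must be absorbed (e.g. by an auxiliary coercive regularization vanishing in the limit, at the price of replacing the identity $\mu_{0,k}=\mu_0$ by $\nabla\mu_{0,k}\to\nabla\mu_0$ in $L^2(\Omega)$).

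Granting the data, the conclusion is a compactness argument. Theorem \ref{CH-strong} gives $(\phi_k,\mu_k)$ solving \eqref{CH}--\eqref{bcic} with the potential $\Psi_k$, uniformly bounded in $L^\infty(0,\infty;W^{2,p}(\Omega))\cap H^1(0,\infty;H^1(\Omega))$ for $\phi_k$ and in $L^\infty(0,\infty;H^1(\Omega))\cap L^2_{\uloc}([0,\infty);H^3(\Omega))$ for $\mu_k$, the bounds being independent of $k$ thanks to the initial data step and the hypothesis $\uu\in L^2(0,\infty;\H^1_{0,\sigma}(\Omega))$. Extracting weak/weak-$\ast$ limits $(\phi,\mu)$ yields the regularity \eqref{REGdo}; the almost everywhere convergence of $\phi_k$ from the Aubin--Lions lemma gives $|\phi|\le 1$ a.e. and the boundary condition $\partial_\n\mu=0$. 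By Theorem \ref{BasicThmDO} (applied with $\uu_k\equiv\uu$ and the above $\phi_{0,k}$) the limit $(\phi,\mu)$ is the \emph{unique} weak solution of \eqref{CHDO-reg} and $F_k'(\phi_k)\rightharpoonup \mu+\Delta\phi+\theta_0\phi\in\partial I_{[-1,1]}(\phi)$, which simultaneously verifies the inclusion, identifies the limit independently of the subsequence, and yields uniqueness.

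Finally, the quantitative estimates \eqref{N-mu-do} and \eqref{N-phit-do} follow by lower semicontinuity of the $L^\infty(0,\infty;L^2(\Omega))$ and $L^2(0,\infty;L^2(\Omega))$ norms under the weak limits, passing to the limit in \eqref{N-mu}--\eqref{N-phit} and using that the data satisfy $\limsup_k\|\nabla\mu_{0,k}\|_{L^2(\Omega)}\le\|\nabla\mu_0\|_{L^2(\Omega)}$. The last assertion, that $\uu\in L^\infty(0,\infty;\L^2_\sigma(\Omega))$ implies $\partial_t\phi\in L^\infty(0,\infty;H^1(\Omega)')$, is immediate from the comparison estimate \eqref{phit-est}, which only uses $\partial_t\phi=\Delta\mu-\uu\cdot\nabla\phi$ and is insensitive to the potential, exactly as in Theorem \ref{CH-strong}.
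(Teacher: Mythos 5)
Your overall route is the paper's: approximate by the logarithmic problems with $\theta_k\searrow 0$, feed them initial data produced by a regularized stationary problem, invoke Theorem \ref{CH-strong} with constants uniform for $\theta$ in a bounded interval, and pass to the limit using Theorem \ref{BasicThmDO} and lower semicontinuity. The one step you leave as a sketch is, however, exactly the step where your primary formulation breaks down. If you define $\phi_{0,k}$ by $-\Delta\phi_{0,k}+F_k'(\phi_{0,k})-\theta_0\phi_{0,k}=\mu_0$, the limiting stationary inclusion is $-\Delta\phi-\theta_0\phi+\partial I_{[-1,1]}(\phi)\ni\mu_0$, whose underlying operator is not monotone because of the term $-\theta_0\phi$; the hypothesis \eqref{init-do} tells you that $\phi_0$ is \emph{a} solution of this inclusion, not the only one (for $\theta_0$ large this problem genuinely admits several solutions), so you cannot identify the limit of $\phi_{0,k}$ with $\phi_0$, and without that identification the scheme collapses: the approximate chemical potentials no longer converge to $\mu_0$ and the constants in \eqref{N-mu-do}--\eqref{N-phit-do} are lost. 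Your parenthetical ``auxiliary coercive regularization'' is the right instinct, but it must actually be carried out; the paper's version is to solve
\begin{equation*}
-\Delta\phi_0^k+\tfrac1k F_0'(\phi_0^k)+\phi_0^k=\mu_0+\theta_0\phi_0+\phi_0\quad\text{in }\Omega,\qquad \partial_\n\phi_0^k=0\ \text{on }\partial\Omega,
\end{equation*}
with the \emph{known} target $\phi_0$ appearing on the right-hand side, so that the operator on the left is strictly monotone, the weak-$H^2$/strong-$H^1$ limit $\phi^*$ satisfies $\mu_0+\theta_0\phi_0+\phi_0+\Delta\phi^*-\phi^*\in\partial I_{[-1,1]}(\phi^*)$, and strict monotonicity of $\phi\mapsto-\Delta\phi+\phi+\partial I_{[-1,1]}(\phi)$ combined with \eqref{init-do} forces $\phi^*=\phi_0$. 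The price is, as you anticipated, that the approximate potential is not $\mu_0$ exactly but $\mu_0^k=\mu_0+(\theta_0+1)(\phi_0-\phi_0^k)\to\mu_0$ in $H^1(\Omega)$, which still recovers the factors $4\|\nabla\mu_0\|_{L^2(\Omega)}^2$ and $6\|\nabla\mu_0\|_{L^2(\Omega)}^2$ in \eqref{N-mu-do} and \eqref{N-phit-do} by lower semicontinuity. With this correction the remainder of your argument (uniform bounds from \eqref{N-mu}--\eqref{N-phit}, compactness, identification of the inclusion and uniqueness via Theorem \ref{BasicThmDO}, and the comparison estimate for $\partial_t\phi$) coincides with the paper's proof.
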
 
\begin{proof}
The argument relies on the limit $\theta =1/k \searrow 0$, with $k\in \mathbb{N}$, in the problem \eqref{CH}-\eqref{bcic}. To this end, we will use Theorem \ref{CH-strong} with appropriate initial data $\phi_0^k$. To fulfill the assumptions on the initial data in Theorem \ref{CH-strong}, we solve the elliptic boundary value problem 
\begin{equation}
\label{DO-apr-in}
\begin{cases}
-\Delta \phi_0^k+\frac 1k F_0^\prime (\phi_0^k)+\phi_0^k=\mu_0+\theta_0 \phi_0 +\phi_0=:\tilde \mu_0 \quad &\text{in } \Omega,\\
\partial_\n \phi_0^k =0\quad &\text{on } \partial \Omega,
\end{cases}
\end{equation}
where we choose $F_0(s)= \frac{1}{2} [ (1+s)\log(1+s)+(1-s)\log(1-s)]$.
Some of the following estimates are formal but can be justified by approximating $F_0$ by smooth funtions with quadratic growth.
Testing \eqref{DO-apr-in} with $\frac 1k F_0^\prime (\phi_0^k)$ and using the fact that $F_0$ is monotone gives
that $\frac 1k F_0^\prime (\phi_0^k)$ is uniformly in $k$ bounded in $L^2(\Omega)$. Now
elliptic regularity theory gives that $\phi_0^k$ is uniformly in $k$ bounded in $H^2(\Omega)$. 
As $F_0^\prime (\phi_0^k) \in L^2(\Omega)$, we obtain that $|\phi_0^k(x)|<1$ almost everywhere in $\Omega$. 
For a subsequence, we obtain that $\phi_{0}^k$ converges to $\phi^*$ weakly in $H^2(\Omega)$ and strongly in
  $H^1(\Omega)$.
  We now choose $\widetilde{\eta}\in H^1(\Omega)$ with $|\widetilde{\eta}(x)|\le 1$ almost everywhere in $\Omega$ and use
  $\eta=\widetilde{\eta}-\phi_0^k$ as test function in the weak formulation of  \eqref{DO-apr-in} obtaining
\begin{equation}
\label{eq:theta10}
\begin{split}
&
-\int_{\Omega} \tilde \mu_0 \left( \widetilde{\eta}-\phi_0^k \right)\, \d x +
 \int_{\Omega} \nabla\phi_0^k \cdot \nabla \left(\widetilde{\eta}-\phi_0^k\right)\, \d x +  \int_{\Omega} \phi_0^k \left(\widetilde{\eta}-\phi_0^k \right)\, \d x \\
&\quad 
= -\int_{\Omega} \frac 1k  F_0'(\phi_0^k)
  	\left(\widetilde{\eta}-\phi_0^k \right)\, \d x.
 \end{split}
 \end{equation}
As $F_0$ is convex, we have
  \begin{equation*}
  	F_0(\widetilde{\eta})\ge F_0(\phi_0^k) +
  	F_0'(\phi_0^k)(\widetilde{\eta}-\phi_0^k).
  \end{equation*}
Then, the right hand side in \eqref{eq:theta10} is greater or equal than
  \begin{equation*}
  	\int_{\Omega} \frac 1k \left(F_0(\phi_0^k )-F_0(\widetilde{\eta})\right) \, \d x 
  \end{equation*}
which converges to zero for $k$ tending to infinity.
The above convergence properties now give the inequality
 \begin{equation*}
 \label{eq:theta11}
 -\int_{\Omega} \tilde \mu_0 \left(\widetilde{\eta}-\phi^* \right)\, \d x +
  	\int_{\Omega} \nabla\phi^* \cdot
  	\nabla \left(\widetilde{\eta}-\phi^* \right)\, \d x + 
  		\int_{\Omega} \phi^* \cdot
  	\left(\widetilde{\eta}-\phi^* \right)\, \d x
  	\geq 0,
  \end{equation*}
which gives that
$$\tilde \mu_0+\Delta \phi^* -\phi^* = \mu_0 +\theta_0 \phi_0 + \phi_0+\Delta \phi^*-\phi^* \in \partial I_{[-1,1]} (\phi^*)$$ 
together with zero Neumann boundary conditions. As the operator $\phi\mapsto -\Delta \phi +\phi   + \partial I_{[-1,1]}(\phi) $ is strictly monotone
we obtain from \eqref{init-do} that $\phi_0=\phi^*$. 
 
We now consider the sequence of strong solutions $\lbrace (\phi^k,\mu^k) \rbrace$ given by Theorem \ref{CH-strong} with $\theta=\frac 1k$ and $\phi_0^k$ instead of $\phi_0$. In order to derive estimates, which are uniform in $k$, from \eqref{N-mu} and\eqref{N-phit}, we need to control
$  -\Delta \phi_0^k+\frac 1k F_0^\prime (\phi_0^k)-\theta_0 \phi_0^k $ in $H^1(\Omega)$. In fact, by construction in \eqref{DO-apr-in}, we notice that
$$ 
-\Delta \phi_0^k+\frac 1k F_0^\prime (\phi_0^k)-\theta_0 \phi_0^k  =
-\Delta \phi_0^k+
\frac 1k F'_0(\phi_0^k)+\phi_0^k-\theta_0 \phi_0^k-\phi_0^k=  
\mu_0+\theta_0 (\phi_0-\phi_0^k) +(\phi_0-\phi_0^k)  =:\mu_0^k
$$
and we observe that $\mu_0^k$ is  uniformly bounded in $H^1(\Omega)$ in $k$. 
Setting $\Psi_{\frac 1k} (s) =\frac 1k \frac{1}{2}\big[ (1+s)\log(1+s)+(1-s)\log(1-s)\big]
 -\frac{\theta_0}{2}s^2 =\frac 1k F_0(s) - \frac{\theta_0}{2}s^2 $, we infer that
 \begin{equation}
 \label{N-mu-proof-do}
 	\begin{split}
 \| \nabla \mu^k \|_{L^\infty(0,\infty;L^2(\Omega))} 
 		& \leq \left(  4 \left\| \nabla \left(-\Delta \phi^k_{0} +\Psi'_{\frac 1k} (\phi_{0}^k) \right) \right\|_{L^2(\Omega)}^2+  4C \int_0^\infty \| \nabla \uu(s)\|_{L^2(\Omega)}^2 \, \d s\right)^\frac12 \\
 		&\quad \times
 		\mathrm{exp}\left( C \int_0^\infty \| \nabla \uu(s)\|_{L^2(\Omega)}^2 \, \d s \right)
 		\\
 		& \leq \left(  4 \left\| \nabla \left( \mu_0 +(\theta_0+1)(\phi_0-\phi^k_0) \right) \right\|_{L^2(\Omega)}^2+  4C \int_0^\infty \| \nabla \uu(s)\|_{L^2(\Omega)}^2 \, \d s\right)^\frac12 \\
 		&\quad \times
 		\mathrm{exp}\left( C \int_0^\infty \| \nabla \uu(s)\|_{L^2(\Omega)}^2 \, \d s \right)
 	\end{split}
 \end{equation}
 and
 \begin{equation}
 	\label{N-phitproof-do}
 	\begin{split}
 		\int_0^\infty \| \nabla \partial_t \phi^k(s)\|_{L^2(\Omega)}^2 \, \d s
 		&\leq 
 		6 \left(  \left\| \nabla \left(-\Delta \phi^k_{0} +\Psi'_{\frac 1k} (\phi_{0}^k) \right) \right\|_{L^2(\Omega)}^2+ C \int_0^\infty \| \nabla \uu(s)\|_{L^2(\Omega)}^2 \, \d s\right) \\
 		&\quad \times \left( 1+\left( \int_0^\infty \| \nabla \uu(s)\|_{L^2(\Omega)}^2 \, \d s \right) \ \mathrm{exp}\left( 2C \int_0^\infty \| \nabla \uu(s)\|_{L^2(\Omega)}^2 \, \d s \right) \right)
 		\\
 		&\leq 6 \left(   \left\| \nabla \left( \mu_0 +(\theta_0+1)(\phi_0-\phi^k_0) \right) \right\|_{L^2(\Omega)}^2+ C \int_0^\infty \| \nabla \uu(s)\|_{L^2(\Omega)}^2 \, \d s\right) \\
 		&\quad \times \left( 1+\left( \int_0^\infty \| \nabla \uu(s)\|_{L^2(\Omega)}^2 \, \d s \right) \ \mathrm{exp}\left( 2C \int_0^\infty \| \nabla \uu(s)\|_{L^2(\Omega)}^2 \, \d s \right) \right).
 	\end{split}
 \end{equation}
In particular, the constant $C$ depends on $\Omega$, $\theta_0$, and $\overline{\phi_0} $, but is independent of $k$.
In the limit as $k$ tends to $\infty$, we obtain that $\nabla \mu\in L^\infty(0,\infty;L^2(\Omega))$
and $\nabla \partial_t \phi \in L^2(0,\infty;L^2(\Omega))$. In addition, thanks to \cite[Proposition 3.3]{Abels2011}, we deduce that $\phi \in L^\infty(0, \infty; W^{2,p}(\Omega))$ for $2\leq p < \infty$ if $d=2$ and $p=6$ if $d=3$. Also,   the estimates \eqref{N-mu-do} and \eqref{N-phit-do} hold due to the fact that
$\phi_0^k \rightarrow \phi_0$ in $H^1(\Omega)$ and the lower semicontinuity of the norm. The rest of the proof follows by arguing similarly as in Theorem \ref{CH-strong} (see also \cite{Abels2011}).
\end{proof}

We now study the double obstacle limit of the full Navier-Stokes/Cahn-Hilliard system \eqref{AGG} with the
boundary conditions \eqref{AGG-bc}.  The limiting system is given as
\begin{equation}
	\label{AGG-DO}
\begin{cases}
	\partial_t \left( \rho(\phi)\uu\right) + \div \left( \uu \otimes \left( \rho(\phi) \uu + \widetilde{\J} \right) \right) - \div \left( \nu(\phi) D \uu \right) + \nabla P= - \div \left( \nabla \phi \otimes \nabla \phi \right),\\
	\div \, \uu=0,\\
	\partial_t \phi +\uu\cdot \nabla \phi = \Delta \mu,\\
	\mu+\Delta \phi+\theta_0 \phi  \in \partial I_{[-1,1]}(\phi),
\end{cases}
\end{equation}  
in $\Omega \times (0,\infty)$,
subject to the initial and boundary conditions
\begin{equation}
	\label{AGG-DO-bc}
	\begin{cases}
		\uu=\mathbf{0}, \quad \partial_\n\phi=\partial_\n \mu=0 \quad &\text{on }  \partial \Omega \times (0,T),\\
		\uu|_{t=0}=\uu_0, \quad \phi|_{t=0}=\phi_0 \quad &\text{in } \Omega.
	\end{cases}
\end{equation}
Combining the arguments of \cite{Abels2011}, \cite{ADG2013} and the methods of this paper we obtain the following result.
\begin{theorem}[Weak solutions in the double obstacle case]
\label{AGG-DO-Thm}
Let $\Omega$ be a bounded domain in $\mathbb{R}^d$, $d=2,3$, with boundary $\partial \Omega$ of class $C^2$. Moreover, let  $0<\theta_k\leq 1$, $ k\in \mathbb{N}$, be such that $\lim_{k\rightarrow \infty}\theta_k = 0$. 
Assume that $\phi_0, \phi_{0,k} \in H^1(\Omega)$ with $\| \phi_{0,k}\|_{L^\infty(\Omega)}\leq 1$ and $ \sup_{k\in \mathbb{N}}|\overline{\phi_{0,k}}|<1$, and  $\uu_0, \uu_{0,k} \in  \H^1_{0,\sigma}(\Omega)$ be such that 
$$ 
\uu_k \rightarrow  \uu \quad \text{in}\quad \H^1_{0,\sigma}(\Omega), \quad \phi_{0,k}  \rightarrow \phi_0 \quad \text{in} \quad H^1(\Omega) \quad \text{as } \, k \rightarrow \infty.
$$ 
	  In addition, let 
	$(\uu_k, \phi_k,\mu_k)$ be weak solutions to \eqref{AGG}-\eqref{AGG-bc} with initial values $(\uu_{0,k}, \phi_{0,k})$. Then, there exists a subsequence $k_j$, $j\in\mathbb{N}$, with $k_j\rightarrow \infty$ as $j\rightarrow \infty$,
such that as $j$ tends to infinity
\begin{equation}
\label{obstacle-limit-AGG}
\begin{split}
\begin{aligned}
(\uu_{k_j}, \nabla \mu_{k_j})	 &\rightharpoonup (\uu, \nabla \mu) \quad  &&\text{weakly in } L^2(0,\infty; \H_{0,\sigma}^1(\Omega)\times L^2(\Omega)),\\
(\uu_{k_j}, \phi_{k_j}) & \rightharpoonup (\uu, \phi) \quad  &&\text{weak-star in  } L^\infty(0,\infty;\L_\sigma^2(\Omega)\times H^1(\Omega)),\\ (\phi_{k_j}, \mu_{k_j})&\rightharpoonup (\phi, \mu)  &&\text{weakly in } 
L^{2}(0,T;W^{2,p}(\Omega)\times L^2(\Omega)),
\end{aligned}
\end{split}
\end{equation}
for all $T\in(0,\infty)$, with $p\in [2,\infty)$ arbitrary if $d=2$ and $p=6$ if $d=3$, and $(\uu, \phi,\mu)$ is a weak solution to \eqref{AGG-DO}--\eqref{AGG-DO-bc}.
Furthermore, the energy inequality
	\begin{equation}
		\label{energy-do-ineq}
		E^{\mathrm{do}}(\uu(t), \phi(t))+ \int_s^t \left\| \sqrt{\nu(\phi(\tau))} D \uu(\tau)\right\|_{L^2(\Omega)}^2 + \| \nabla \mu(\tau)\|_{L^2(\Omega)}^2 \, \d \tau \leq  E^{\mathrm{do}}(\uu(s),\phi(s))
	\end{equation}
	holds for all $t \in [s, \infty)$ and almost all $s \in [0,\infty)$ (including $s=0$). Here, the total energy $E^{\mathrm{do}}(\uu,\phi)$ is given by
	$$E^{\mathrm{do}}(\uu,\phi)= E_{\mathrm{kin}}(\uu, \phi) + E^{\mathrm{do}}_{\mathrm{free}}(\phi).
	$$
\end{theorem}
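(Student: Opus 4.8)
The plan is to pass to the limit $k\to\infty$ in the sequence $(\uu_k,\phi_k,\mu_k)$ of weak solutions, combining the compactness arguments used for the existence of weak solutions to \eqref{AGG} with the monotonicity argument underlying the double obstacle limit of Theorem~\ref{BasicThmDO}. First I would establish uniform-in-$k$ estimates. Since $\uu_{0,k}\to\uu_0$ in $\H^1_{0,\sigma}(\Omega)$ and $\phi_{0,k}\to\phi_0$ in $H^1(\Omega)$ with $\sup_k|\overline{\phi_{0,k}}|<1$ and $\|\phi_{0,k}\|_{L^\infty(\Omega)}\le 1$, the initial energies $E(\uu_{0,k},\phi_{0,k})$ are bounded uniformly in $k$ (the logarithmic part of the potential carries the factor $\theta_k\le 1$ and is controlled by $\theta_k|\Omega|\log 2$, while the quadratic part is harmless). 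Hence the energy inequality \eqref{energy-ineq} yields uniform bounds: $\uu_k$ in $L^\infty(0,\infty;\L^2_\sigma(\Omega))\cap L^2(0,\infty;\H^1_{0,\sigma}(\Omega))$ by Korn's inequality, $\phi_k$ in $L^\infty(0,\infty;H^1(\Omega))$ with $|\phi_k|\le 1$ a.e., and $\nabla\mu_k$ in $L^2(0,\infty;L^2(\Omega))$. Invoking the regularity behind Theorem~\ref{BasicThmDO}, whose constants are uniform in $\theta_k$, together with the control of the mean value $\overline{\mu_k}=\overline{F_k'(\phi_k)}-\theta_0\overline{\phi_k}$ obtained from \eqref{F'-L1}, I would upgrade these to uniform bounds for $\phi_k$ in $L^2(0,T;W^{2,p}(\Omega))$ and $\mu_k$ in $L^2(0,T;H^1(\Omega))$ and, by comparison in the Cahn--Hilliard equation, for $\partial_t\phi_k$ in $L^2(0,T;H^1(\Omega)')$, for every $T<\infty$.

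Next I would extract compactness. Testing the weak momentum formulation \eqref{uu-weak} against solenoidal test functions provides a uniform bound for $\partial_t(\rho(\phi_k)\uu_k)$ in a negative-order space. An application of the Aubin--Lions--Simon lemma then gives, along a subsequence, strong convergence $\phi_k\to\phi$ in $C([0,T];L^2(\Omega))\cap L^2(0,T;H^1(\Omega))$, hence $\phi_k\to\phi$ and $\nabla\phi_k\to\nabla\phi$ almost everywhere and in $L^2(\Omega\times(0,T))$, and strong convergence $\uu_k\to\uu$ in $L^2(0,T;\L^2_\sigma(\Omega))$; interpolation against the uniform $L^2(0,T;\L^6(\Omega))$ bound upgrades the latter to strong convergence in $L^p(0,T;\L^q(\Omega))$ for exponents below the energy scaling. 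Together with the weak limits \eqref{obstacle-limit-AGG}, these are precisely the convergences needed to treat the nonlinearities.

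I would then pass to the limit in the equations. In the momentum balance the convective term $\rho(\phi_k)\uu_k\otimes\uu_k$ converges by the strong convergence of $\uu_k$ and the a.e.\ convergence $\rho(\phi_k)\to\rho(\phi)$; the flux term $\uu_k\otimes\widetilde{\J}_k=-\tfrac{\rho_1-\rho_2}{2}\,\uu_k\otimes\nabla\mu_k$ and the capillary force $\mu_k\nabla\phi_k$ converge by pairing the strong convergence of $\uu_k$, respectively $\nabla\phi_k$, against the weak convergence of $\nabla\mu_k$, respectively $\mu_k$. The identity $\partial_t\phi+\uu\cdot\nabla\phi=\Delta\mu$ passes to the limit directly. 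The singular potential term I would treat by a Minty-type monotonicity argument: convexity of $F_k$ gives $\int_0^T\!\!\int_\Omega F_k'(\phi_k)(\zeta-\phi_k)\le\int_0^T\!\!\int_\Omega\big(F_k(\zeta)-F_k(\phi_k)\big)$ for admissible $\zeta$, and writing $F_k'(\phi_k)=\mu_k+\Delta\phi_k+\theta_0\phi_k$ and using $\int|\nabla\phi_k|^2\to\int|\nabla\phi|^2$ one recovers, in the limit, the variational inequality equivalent to $\mu+\Delta\phi+\theta_0\phi\in\partial I_{[-1,1]}(\phi)$. Finally, \eqref{energy-do-ineq} follows from weak lower semicontinuity of the kinetic and dissipation terms and $E^{\mathrm{do}}_{\mathrm{free}}(\phi)\le\liminf_k E_{\mathrm{free}}(\phi_k)$.

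The principal difficulty is twofold. First, because the velocities $\uu_k$ are part of the solution, only weak $\H^1$ convergence is available, so Theorem~\ref{BasicThmDO} cannot be invoked verbatim; its monotonicity argument must be reproduced \emph{inside} the coupled system, relying on the strong $L^2(0,T;H^1(\Omega))$ convergence of $\phi_k$ furnished by the compactness step. Second, every estimate must be checked to be uniform as $\theta_k\searrow 0$: the delicate point is the control of the mean value of $\mu_k$ via \eqref{F'-L1}, precisely where the potential degenerates, and the verification that the $W^{2,p}(\Omega)$ bound on $\phi_k$ survives the limit so that the weak formulation of the coupled system is preserved.
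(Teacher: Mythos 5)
Your proposal is correct and follows exactly the route the paper intends: the paper gives no detailed argument for this theorem, stating only that it follows by combining the compactness/energy arguments of \cite{ADG2013} for the momentum equation with the monotonicity (Minty-type) argument of \cite{Abels2011} for the double obstacle limit, together with the $\theta$-uniform estimates of Section~\ref{sec:CH}, which is precisely what you outline (including the key points that the constants in \eqref{F'-L1} and Theorem~\ref{well-pos} are uniform for $\theta_k\in(0,1]$ and that the subdifferential inclusion is recovered via the variational inequality using the strong $L^2(0,T;H^1(\Omega))$ convergence of $\phi_k$). The only point treated tersely is the energy inequality at intermediate times $s$, which requires $\limsup_k E(\uu_k(s),\phi_k(s))\le E^{\mathrm{do}}(\uu(s),\phi(s))$ for a.a.\ $s$, but this follows for a.e.\ $s$ from the strong convergences of $\uu_k$ and $\nabla\phi_k$ in $L^2(\Omega\times(0,T))$ and $0\le F_k(\phi_k)\le\theta_k\log 2$ that you already have.
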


The following result now states additional regularity for the concentration of any weak solution obtained in the previous Theorem \ref{AGG-DO-Thm}. The proof of which can be performed exactly in the same way as in the proof of 
Theorem \ref{MAIN} (i), see the end of Section \ref{131}.

\begin{theorem}[Regularity  of weak solutions in the double obstacle case]
\label{MAIN-DO}
Let $\Omega$ be a bounded domain in $\mathbb{R}^d$, $d=2,3$, with boundary $\partial \Omega$ of class $C^3$. Consider a global weak solution $(\uu,\phi)$ given by Theorem \ref{AGG-DO-Thm}. Then, for any $\tau >0$, we have
		\begin{equation}
			\label{REG-CONC-DO}
			\begin{split}
				&\phi \in L^\infty( \tau,\infty; W^{2,p}(\Omega)), \quad \partial_t \phi \in L^2( \tau,\infty;H^1(\Omega)),\\
				&\mu \in L^{\infty}(\tau,\infty; H^1(\Omega))\cap L_{\uloc}^2([\tau,\infty);H^3(\Omega)),
			\end{split}
\end{equation}
where $2\leq p <\infty$ if $d=2$ and $p=6$ if $d=3$. Moreover, the equations \eqref{AGG-DO}$_{3-4}$ are satisfied almost everywhere in $\Omega \times (0,\infty)$ and  the boundary conditions $\partial_\n \phi=\partial_\n \mu=0$ holds almost everywhere on $\partial\Omega\times(0,\infty)$. 
\end{theorem}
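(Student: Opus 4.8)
The plan is to replicate the argument used for Theorem~\ref{MAIN}~(i) at the end of Section~\ref{131}, substituting the double obstacle regularity result Theorem~\ref{CH-strong-do} for Theorem~\ref{CH-strong}. Let $(\uu,\phi,\mu)$ be a weak solution of \eqref{AGG-DO}--\eqref{AGG-DO-bc} produced by Theorem~\ref{AGG-DO-Thm} and fix an arbitrary $\tau>0$. Regarding $\phi$ as a weak solution of the convective Cahn--Hilliard equation \eqref{CHDO} with the divergence-free drift $\uu$, I would first extract from the energy inequality \eqref{energy-do-ineq} (together with Korn's inequality) the global dissipation bound
$$
\int_0^\infty \| \nabla \uu(s)\|_{L^2(\Omega)}^2 + \| \nabla \mu(s)\|_{L^2(\Omega)}^2 \, \d s <\infty,
$$
so that in particular $\uu \in L^2(0,\infty;\H^1_{0,\sigma}(\Omega))$, which is exactly the hypothesis needed to invoke Theorem~\ref{CH-strong-do}.

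The second step is to select a good initial time. From Theorem~\ref{AGG-DO-Thm} and Theorem~\ref{BasicThmDO} the weak solution enjoys $\phi \in L^4_{\uloc}([0,\infty);H^2(\Omega))$, $\mu \in L^2_{\uloc}([0,\infty);H^1(\Omega))$ and $\partial_\n \phi=0$ almost everywhere on $\partial\Omega\times(0,\infty)$, while the inclusion $\mu+\Delta\phi+\theta_0\phi \in \partial I_{[-1,1]}(\phi)$ holds for almost every $(x,t)$. Since all these memberships are finite for almost every time, I can pick $\tau^\star \in (0,\tau]$ for which simultaneously $\phi(\tau^\star)\in H^2(\Omega)$ with $\|\phi(\tau^\star)\|_{L^\infty(\Omega)}\leq 1$ and $|\overline{\phi(\tau^\star)}|<1$, $\partial_\n\phi(\tau^\star)=0$ on $\partial\Omega$, and $\mu(\tau^\star)\in H^1(\Omega)$ satisfying $\mu(\tau^\star)+\Delta\phi(\tau^\star)+\theta_0\phi(\tau^\star)\in\partial I_{[-1,1]}(\phi(\tau^\star))$ almost everywhere in $\Omega$. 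This is precisely the set of hypotheses \eqref{init-do} on the initial data required by Theorem~\ref{CH-strong-do}, now imposed at the time $\tau^\star$.

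With this initial datum I would then apply Theorem~\ref{CH-strong-do} on the interval $[\tau^\star,\infty)$ to obtain a (unique) solution with the regularity \eqref{REGdo}, namely $\phi\in L^\infty(\tau^\star,\infty;W^{2,p}(\Omega))$, $\partial_t\phi\in L^2(\tau^\star,\infty;H^1(\Omega))$ and $\mu \in L^\infty(\tau^\star,\infty;H^1(\Omega))\cap L^2_{\uloc}([\tau^\star,\infty);H^3(\Omega))$, together with the boundary condition $\partial_\n\mu=0$ and the fact that \eqref{CHDO-reg} is satisfied almost everywhere. The uniqueness of weak solutions to the double obstacle convective Cahn--Hilliard problem from Theorem~\ref{BasicThmDO} identifies this strong solution with the restriction of $(\phi,\mu)$ to $[\tau^\star,\infty)$; since $\tau^\star\leq\tau$, the claimed estimates \eqref{REG-CONC-DO} on $[\tau,\infty)$ follow, and the pointwise validity of \eqref{AGG-DO}$_{3-4}$ together with the Neumann conditions is inherited. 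The only point requiring care --- the analogue of the condition $\mu_0\in H^1(\Omega)$ in the Flory--Huggins case --- is ensuring that at the chosen time the chemical potential $\mu(\tau^\star)$ genuinely lies in $H^1(\Omega)$ and realizes the subdifferential inclusion \eqref{init-do}; this is where one must exploit that $\mu\in L^2_{\uloc}([0,\infty);H^1(\Omega))$ and that the inclusion holds for almost every time, so that a null set of bad times can be avoided.
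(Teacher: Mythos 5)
Your proposal is correct and follows essentially the same route as the paper, which explicitly reduces Theorem \ref{MAIN-DO} to the argument at the end of Section \ref{131}: extract $\uu\in L^2(0,\infty;\H^1_{0,\sigma}(\Omega))$ from the energy inequality, choose a good time $\tau^\star\in(0,\tau]$ at which the hypotheses \eqref{init-do} hold, apply Theorem \ref{CH-strong-do} on $[\tau^\star,\infty)$, and identify the resulting strong solution with the weak one via the uniqueness in Theorem \ref{BasicThmDO}. Your closing remark correctly isolates the only delicate point (selecting $\tau^\star$ so that $\mu(\tau^\star)\in H^1(\Omega)$ and the subdifferential inclusion holds), and handles it as the paper does.
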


Let us finally comment about the longtime behavior. In the double obstacle case, we cannot obtain enough large time regularity to show the same results as in the case of a logarithmic potential. This is mainly due to the fact that a separation property does not hold in this case. In addition, as typical for obstacle problems, the variable $\phi$ in space will not lie in $H^3(\Omega)$.
This is already true for simple stationary solutions, see, e.g., \cite{BE1991}.
In addition, no Lojasiewicz-Simon inequality is known so far in the double obstacle case and hence it is not possible to show that the solution converges for large time to a single stationary solution. However, we can still characterize  the $\omega$-limit set of a weak solution. As in Lemma \ref{lem:ConvU} one can show that the velocity for large times converges to $0$.
Concerning the concentration $\phi$, its long time limits consist of critical points of the energy $E^{\text{do}}_{\text{free}}$.
Defining the convex part of  $E^{\text{do}}_{\text{free}}$ as 
$$
E_0^{\text{do}} (\phi)= 
\int_{\Omega} \frac12 |\nabla \phi|^2 + I_{[-1,1]} (\phi)\, \d x,
$$ 
we now recall the characterization of
the subgradient $\partial E_0^{\text{do}}$ given in \cite{Abels2011, KenmochiNP1995}. 
\begin{theorem}
The domain of definition of the subgradient $\partial E_0^{\operatorname{do}}$ with respect to $L^2_{(0)}(\Omega)$ of $ E_0^{\operatorname{do}}$ is given as
\begin{equation*}
\mathcal{D}(\partial E_{0}^{\operatorname{do}})= 
\left \lbrace \phi\in H^2(\Omega)\cap L^2_{(m)}(\Omega)\,:\,\phi(x) \in [-1,1] \  \text{a.e. in } \Omega, \ \partial_\n \phi|_{\partial\Omega} =0 \right\rbrace.
\end{equation*} 
For any $\phi_0\in \mathcal{D}(\partial E_{0}^{\operatorname{do}})$, we have   $\mu_0\in \partial E_{0}^{\operatorname{do}}(\phi)$
if and only if
\begin{equation*}
	\mu_0(x) +\Delta \phi (x) +\overline \mu \in \partial I_{[-1,1]}(\phi(x)) \quad \text{a.e. in } \Omega
\end{equation*} 
for some constant $\overline \mu$.
\end{theorem}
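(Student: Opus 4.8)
The plan is to recast the statement as the identification of $\partial E_0^{\operatorname{do}}$ with a concrete monotone operator and then to invoke maximal monotonicity. On the Hilbert space $L^2_{(0)}(\Omega)$ I would define, on the candidate domain $\mathcal{D}(A)=\{\phi\in H^2(\Omega)\cap L^2_{(m)}(\Omega): \phi(x)\in[-1,1] \text{ a.e.}, \ \partial_\n\phi|_{\partial\Omega}=0\}$, the operator
\begin{equation*}
A\phi=\bigl\{P_0(-\Delta\phi+\beta): \beta\in L^2(\Omega),\ \beta(x)\in\partial I_{[-1,1]}(\phi(x)) \text{ a.e. in } \Omega\bigr\}.
\end{equation*}
Writing $\mu_0=P_0(-\Delta\phi+\beta)$ and taking spatial means (recall $\int_\Omega\Delta\phi\,\d x=\int_{\partial\Omega}\partial_\n\phi\,\d\sigma=0$) shows that, with $\overline\mu:=\overline\beta$, one has $\mu_0+\Delta\phi+\overline\mu=\beta\in\partial I_{[-1,1]}(\phi)$, so that $\mu_0\in A\phi$ is precisely the pointwise inclusion in the statement. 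Since $E_0^{\operatorname{do}}$ is proper, convex and lower semicontinuous, $\partial E_0^{\operatorname{do}}$ is maximal monotone; hence it suffices to prove the two facts (i) $A\subseteq\partial E_0^{\operatorname{do}}$ and (ii) $A$ is maximal monotone. Indeed, (i) exhibits $\partial E_0^{\operatorname{do}}$ as a monotone extension of $A$, and (ii) forbids proper monotone extensions, forcing $A=\partial E_0^{\operatorname{do}}$ and thereby yielding both the domain identity and the characterization at once.

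For inclusion (i) I would argue directly. Let $\mu_0=P_0(-\Delta\phi+\beta)\in A\phi$ and let $\psi$ belong to the effective domain of $E_0^{\operatorname{do}}$, i.e. $\psi\in H^1_{(m)}(\Omega)$ with $|\psi|\le1$ a.e. Convexity of $t\mapsto\tfrac12|t|^2$ gives $E_0^{\operatorname{do}}(\psi)-E_0^{\operatorname{do}}(\phi)\ge\int_\Omega\nabla\phi\cdot\nabla(\psi-\phi)\,\d x$, and since $\phi\in H^2(\Omega)$ with $\partial_\n\phi=0$, an integration by parts turns the right-hand side into $-\int_\Omega\Delta\phi\,(\psi-\phi)\,\d x=\int_\Omega(\mu_0-\beta+\overline\beta)(\psi-\phi)\,\d x$. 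The constant $\overline\beta$ drops out because $\psi-\phi$ has vanishing mean, while the pointwise subgradient inequality $\beta(x)(\psi-\phi)\le I_{[-1,1]}(\psi)-I_{[-1,1]}(\phi)=0$ shows $-\int_\Omega\beta(\psi-\phi)\,\d x\ge0$; hence $E_0^{\operatorname{do}}(\psi)-E_0^{\operatorname{do}}(\phi)\ge(\mu_0,\psi-\phi)$, which is exactly (i).

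For (ii) I would use Minty's theorem and prove that $\mathrm{Id}+A$ is surjective onto $L^2_{(0)}(\Omega)$: given $f\in L^2_{(0)}(\Omega)$ one seeks $\phi\in\mathcal{D}(A)$ and $\beta\in\partial I_{[-1,1]}(\phi)$ with $\phi+P_0(-\Delta\phi+\beta)=f$. The natural route is to replace $\partial I_{[-1,1]}$ by its Yosida regularization $\beta_\varepsilon$ (a smooth, monotone, globally Lipschitz penalty) and to solve the regularized Neumann problem $\phi_\varepsilon-\Delta\phi_\varepsilon+P_0\beta_\varepsilon(\phi_\varepsilon)=f$, whose solvability follows from the theory of coercive monotone operators on $H^1_{(m)}(\Omega)$. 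Testing with $\phi_\varepsilon$ yields a uniform $H^1$ bound, and the crucial step is to test with $-\Delta\phi_\varepsilon$: using $\beta_\varepsilon'\ge0$ together with $\partial_\n\phi_\varepsilon=0$ to sign the penalized term, and combining with elliptic regularity for the Neumann Laplacian, one obtains a uniform $H^2$ bound and a uniform $L^2$ bound on $\beta_\varepsilon(\phi_\varepsilon)$. Passing to the limit $\varepsilon\to0$, weak $H^2$ and weak $L^2$ limits together with the a.e.\ convergence of $\phi_\varepsilon$ (via the compact embedding $H^2\hookrightarrow C(\overline\Omega)$) identify $\phi$ and, by a standard Minty/weak--strong closure argument, show that the limit of $\beta_\varepsilon(\phi_\varepsilon)$ lies in $\partial I_{[-1,1]}(\phi)$ a.e., producing the required solution and hence surjectivity.

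The main obstacle is the uniform $H^2$ estimate up to the boundary and the limit identification of the constraint force $\beta$: signing the penalized term after testing with $-\Delta\phi_\varepsilon$ must be reconciled with the homogeneous Neumann condition, which is where the $C^2$ regularity of $\partial\Omega$ enters, and the closure of the graph of $\partial I_{[-1,1]}$ under these convergences is delicate precisely on the coincidence set $\{|\phi|=1\}$. Since this is exactly the characterization recorded in \cite{Abels2011, KenmochiNP1995}, I would defer the detailed regularity step to those references; the only genuinely new bookkeeping here is the mean-zero projection $P_0$ and the resulting constant $\overline\mu$, which, as shown above, merely encodes $\overline\mu=\overline\beta$ and never affects the variational inequalities because all admissible test differences $\psi-\phi$ have vanishing mean.
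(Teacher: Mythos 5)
Your argument is essentially correct. Note that the paper itself offers no proof of this theorem: it is stated as a recollection of the characterization from \cite{Abels2011, KenmochiNP1995}, so there is no in-paper argument to compare against. Your route --- introduce the concrete operator $A\phi=\{P_0(-\Delta\phi+\beta):\beta\in L^2(\Omega),\ \beta\in\partial I_{[-1,1]}(\phi)\ \text{a.e.}\}$, check $A\subseteq\partial E_0^{\operatorname{do}}$ directly from the subgradient inequality, and upgrade the inclusion to equality by proving maximal monotonicity of $A$ via Minty's theorem and a Yosida-penalized Neumann problem --- is the standard subdifferential-operator approach and is, in substance, exactly what the cited references carry out (and what \cite{AW2007} does for the logarithmic potential). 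Your step (i) is complete as written: the translation between $\mu_0=P_0(-\Delta\phi+\beta)$ and the pointwise inclusion with $\overline\mu=\overline\beta$ is right (using $\int_\Omega\Delta\phi\,\d x=0$ from the Neumann condition), and the constant correctly disappears against mean-free test differences. The only slip is a bookkeeping one in step (ii): since $\mathcal{D}(A)\subset L^2_{(m)}(\Omega)$ while the ambient Hilbert space is $L^2_{(0)}(\Omega)$, the Minty range condition should read $(\phi-m)+P_0(-\Delta\phi+\beta)=f$ (after identifying $L^2_{(m)}(\Omega)$ with $L^2_{(0)}(\Omega)$ by translation); the equation $\phi+P_0(-\Delta\phi+\beta)=f$ as you wrote it is unsolvable for $m\neq 0$ because the two sides have different means. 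This does not affect the uniform $H^1$/$H^2$ estimates, the $L^2$ bound on the penalty term, or the weak--strong closure argument identifying $\beta\in\partial I_{[-1,1]}(\phi)$, all of which are standard and correctly sketched.
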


The stationary solutions $\phi_\infty \in \mathcal{D}(\partial E_{0}^{\operatorname{do}})$   of the double obstacle version of the Cahn-Hilliard equation fulfill (with a suitable constant $\overline \mu$)  
\begin{alignat}{2}
\label{eq:Stat-do-CH1}
\overline \mu +\Delta \phi + \theta_0 \phi&\in \partial I_{[-1,1]}(\phi) &\qquad&\text{in } \Omega,\\
\label{eq:Stat-do-CH2}
\partial_{\n} \phi &=0 & & \text{on } \partial\Omega,\\
\label{eq:Stat-do-CH3}
\frac{1}{|\Omega|}\int_\Omega \phi(x) \, \d x &= m.
\end{alignat}
We notice that the solutions to \eqref{eq:Stat-do-CH1}-\eqref{eq:Stat-do-CH3} are critical points of the functional $E^{\operatorname{do}}_{\operatorname{free}}$ on $H^1_{(m)}(\Omega)$. For the long time behavior, we find with the help of the energy inequality \eqref{energy-do-ineq} and the free energy identity
\eqref{EI-do} similar as in Lemma \ref{lem:ConvU} and Lemma \ref{lem:OmegaLimit} the following result 

\begin{theorem}\label{lem:OmegaLimit-do}
Let $(\uu,\phi)$ be a weak solution to \eqref{AGG-DO}-\eqref{AGG-DO-bc} in the sense of Theorem \ref{AGG-DO-Thm}. Then, we have
	
	$$\uu(t)\to {\bf 0} \ \text{ in } \ \L_{\sigma}^2(\Omega), \quad \text{ as } t\to\infty$$
	and
	\begin{equation*}
		\omega(\uu,\phi)\subseteq \left\{ ({\bf 0},\phi'): \phi'\in W^{2,p}(\Omega)\cap H^1_{(m)}(\Omega)\
		\text{solves (\ref{eq:Stat-do-CH1})-(\ref{eq:Stat-do-CH3})}\right\},
	\end{equation*}
	where $m = \overline{\phi_0}$ and $\omega(\uu,\phi)$ is the $\omega$-limit set with respect to the norm of $\L^2_\sigma\times W^{2-\eps,p}(\Omega)$ for an arbitrary $\eps>0$  and $p$ as before.  
\end{theorem}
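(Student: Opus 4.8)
The plan is to prove the two assertions separately, adapting in each case the corresponding matched-density argument, with the energy inequality \eqref{energy-do-ineq} and the free energy identity \eqref{EI-do} taking over the roles played by \eqref{eq:EnergyInequality} and \eqref{EI} in Lemma \ref{lem:ConvU} and Lemma \ref{lem:OmegaLimit}. Since the kinetic energy $E_{\text{kin}}$ is insensitive to the choice of potential and the coupling term $(\uu\cdot\nabla\phi,\mu)$ is structurally unchanged, the subtraction performed in Lemma \ref{lem:ConvU} carries over word for word to the double obstacle setting.

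First I would establish $\uu(t)\to\mathbf{0}$ in $\L^2_\sigma(\Omega)$. Subtracting \eqref{EI-do} from \eqref{energy-do-ineq} yields the kinetic energy inequality
\begin{equation*}
E_{\text{kin}}(\uu(t)) + \int_\tau^t\int_\Omega \nu(\phi)|D\uu|^2\,\d x\,\d s \le E_{\text{kin}}(\uu(\tau)) - \int_\tau^t\int_\Omega \uu\cdot\nabla\mu\,\phi\,\d x\,\d s,
\end{equation*}
valid for almost every $\tau\ge 0$ and every $t\ge\tau$, exactly as in \eqref{Kin-en-eq}. Since $\nabla\mu\in L^2(\Omega\times(0,\infty))$ and $\uu\in L^2(0,\infty;\H^1_{0,\sigma}(\Omega))$ by \eqref{energy-do-ineq}, for arbitrary $\eps>0$ I can choose $T$ large so that both $\|\nabla\mu\|_{L^2(\Omega\times(T,\infty))}\le\eps$ and $\|\uu(T)\|_{L^2(\Omega)}\le\eps$ hold (the latter because the set of such times has positive measure). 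The Cauchy--Schwarz, Korn and Young inequalities, together with the uniform lower bound on $\rho(\phi)$ and the bound $|\phi|\le 1$, then control $\|\uu\|_{L^\infty(T,\infty;L^2(\Omega))}^2$ by $C\eps$ with $C$ independent of $T$ and $\eps$; this is precisely the scheme of Lemma \ref{lem:ConvU} and needs no new ingredient.

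For the characterization of $\omega(\uu,\phi)$, I first note that it is well defined, nonempty, compact and connected in $\L^2_\sigma(\Omega)\times W^{2-\eps,p}(\Omega)$, thanks to the regularity $\phi\in L^\infty(\tau,\infty;W^{2,p}(\Omega))$ of Theorem \ref{MAIN-DO} (hence $\phi\in BUC([\tau,\infty);W^{2-\eps',p}(\Omega))$ by interpolation) and the convergence $\uu(t)\to\mathbf{0}$. From \eqref{EI-do} with $\tau=0$ and $\uu\cdot\nabla\mu\,\phi\in L^1(\Omega\times(0,\infty))$, the limit $\lim_{t\to\infty}E^{\text{do}}_{\text{free}}(\phi(t))$ exists, and combined with $E_{\text{kin}}(\uu(t))\to 0$ this gives the existence of $E_\infty:=\lim_{t\to\infty}E^{\text{do}}(\uu(t),\phi(t))$. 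Given $t_n\nearrow\infty$ with $(\uu(t_n),\phi(t_n))\to(\mathbf{0},\phi_0')$, I would set $(\uu_n,\phi_n)(t):=(\uu,\phi)(t+t_n)$ and observe that $\uu_n\to\mathbf{0}$ strongly in $L^2(0,T;\H^1_{0,\sigma}(\Omega))$ for every $T$, as a tail of a convergent integral. Using the uniform a priori bounds for the double obstacle convective Cahn--Hilliard equation, the compactness and monotonicity arguments of \cite{Abels2011}, and the uniqueness asserted in Theorem \ref{BasicThmDO}, one passes to the limit $n\to\infty$ and identifies $\phi_n\to\phi'$, the unique weak solution of \eqref{CHDO} with drift $\uu\equiv\mathbf{0}$ and datum $\phi_0'$, with $\phi_n\to\phi'$ in $L^2(0,T;H^1(\Omega))$. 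Extracting a further subsequence, $E^{\text{do}}_{\text{free}}(\phi_n(t))\to E^{\text{do}}_{\text{free}}(\phi'(t))$ for almost every $t$, while the left-hand side tends to $E_\infty$; hence $E^{\text{do}}_{\text{free}}(\phi'(t))\equiv E_\infty$. Inserting this into \eqref{EI-do} for $\phi'$ with $\uu\equiv\mathbf{0}$ forces $\nabla\mu'(t)=0$ for almost every $t$, so that $\partial_t\phi'=\Delta\mu'=0$ and $\phi'\equiv\phi_0'$ solves \eqref{eq:Stat-do-CH1}--\eqref{eq:Stat-do-CH3} with $m=\overline{\phi_0}$.

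The main obstacle is the passage to the limit in the differential inclusion for the shifted solutions. In contrast to the logarithmic case, where $F'(\phi_n)$ converges pointwise and the limit is handled by Fatou's lemma, here the nonlinearity is the multivalued maximal monotone operator $\partial I_{[-1,1]}$, and identifying the weak limit $F^*$ as an element of $\partial I_{[-1,1]}(\phi')$ must proceed through the variational inequality (monotonicity) formulation, exactly as in the double obstacle limit of \cite{Abels2011}. Because $\uu_n\to\mathbf{0}$ strongly in $L^2(0,T;\H^1_{0,\sigma}(\Omega))$ and the bounds on $(\phi_n,\mu_n)$ are uniform, this identification carries over with the arguments already developed there; no Lojasiewicz--Simon inequality, which is unavailable in the double obstacle case, is required for this weaker conclusion.
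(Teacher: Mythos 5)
Your proposal is correct and follows essentially the same route the paper intends: the paper gives no detailed proof of this theorem, stating only that it follows from the energy inequality \eqref{energy-do-ineq} and the free energy identity \eqref{EI-do} ``similar as in Lemma \ref{lem:ConvU} and Lemma \ref{lem:OmegaLimit}'', and your write-up is a faithful transcription of those two arguments to the double obstacle setting. Your one substantive addition --- replacing the pointwise-convergence-plus-Fatou identification of $F'(\phi_n)$ by the monotonicity/variational-inequality identification of the weak limit as an element of $\partial I_{[-1,1]}(\phi')$, as in \cite{Abels2011} --- is exactly the right way to handle the only step that genuinely changes.
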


\section*{Acknowledgement}
\noindent
Part of this work was done while AG was visiting HA and HG at the 
Department of Mathematics of the University of Regensburg whose hospitality is gratefully acknowledged. AG is a member of Gruppo Nazionale per l'Analisi Matematica, la Probabilit\'a e le loro Applicazioni (GNAMPA) of Istituto Nazionale per l'Alta Matematica (INdAM). {Finally, we are grateful to the anonymous referees for their helpful comments, which improved the manuscript.} 

\section*{Conflict of Interests and Data Availability Statement}

There is no conflict of interests.

There is no associated data to the manuscript.


\begin{thebibliography}{99}
\itemsep=0pt

	
\bibitem{Abels2009}
{\au H.~Abels},
{\ti On a diffuse interface model for two-phase flows of viscous, 
incompressible fluids with matched densities},
{\jou Arch.\ Ration.\ Mech.\ Anal.}
\no{194}{463--506}{2009}

\bibitem{LTWeakSolutions}
{\au H.~Abels.},
{\ti Existence of weak solutions for a diffuse interface model for
  viscous, incompressible fluids with general densities},
{\jou Comm. Math. Phys.}
\no{289(1)}{45--73}{2009}.

\bibitem{Abels2011}
{\au H.~Abels},
{\ti Double obstacle limit for a {N}avier-{S}tokes/{C}ahn-{H}illiard system},
Parabolic Problems, Progr. Nonlinear Differential Equations Appl., vol. 80, Birkhäuser/Springer, Basel AG, Basel, 2011, pp. 1–20.

\bibitem{LTStrongSolutions}
{\au H.~Abels.},
{\ti Strong well-posedness of a diffuse interface model for a viscous, quasi-
  incompressible two-phase flow},
{\jou SIAM J. Math. Anal.}
\no{44(1)}{316--340}{2012}.


\bibitem{AB2018}
{\au H.~Abels, D.~Breit}, 
{\ti Weak solutions for a non-{N}ewtonian diffuse interface model with different densities},
{\jou Nonlinearity}
\no{29}{3426--3453}{2016}

\bibitem{ADG2013}
{\au H.~Abels, D.~Depner, H.~Garcke},
{\ti Existence of weak solutions for a diffuse interface model for two-phase flows of incompressible fluids with different densities},
{\jou J.\ Math.\ Fluid Mech.}
\no{15}{453--480}{2013}

\bibitem{ADG2013-2}
{\au H.~Abels, D.~Depner, H.~Garcke},
{\ti On an incompressible {N}avier-{S}tokes/{C}ahn-{H}illiard system with degenerate mobility},
{\jou Ann.\ Inst.\ H.\ Poincar\'{e} Anal. Non Lin\'{e}aire}
\no{30}{1175--1190}{2013}

\bibitem{AGG2012}
{\au H.~Abels, H.~Garcke, G.~Gr\"{u}n},
{\ti Thermodynamically consistent, frame indifferent diffuse interface models for incompressible two-phase flows with different densities},
{\jou Math.\ Models Methods Appl.\ Sci.}
\no{22}{1150013}{2012}

\bibitem{AG2018}
{\au H.~Abels, H.~Garcke}, 
{\bk Weak solutions and diffuse interface models for incompressible two-phase flows}.
\eds{Handbook of Mathematical Analysis in Mechanics of Viscous Fluid}{Springer International Publishing}{2018}

\bibitem{AW2020}
{\au H.~Abels, J.~Weber},
{\ti Local well-posedness of a quasi-incompressible two-phase flow},
{\jou J. Evol. Equ.}  
\no{21}{3477--3502}{2021} 

\bibitem{AW2007}
{\au H.~Abels, M.~Wilke},
{\ti Convergence to equilibrium for the {C}ahn–{H}illiard equation with a
logarithmic free energy},
{\jou Nonlinear Anal.}  
\no{67}{3176--3193}{2007} 

\bibitem{BE1991}
{\au J.~F.~Blowey, C.~M.~Elliott},
 {\ti The {C}ahn-{H}illiard gradient theory for phase separation
with nonsmooth free energy. {I}. {M}athematical analysis},
{\jou European J. Appl. Math.}
\no{3}{233--280}{1991}

\bibitem{B2002}
{\au F.~Boyer}, 
{\ti A theoretical and numerical model for the study of incompressible mixture flows}, 
{\jou Comput.\ Fluids} 
\no{31}{41--68}{2002}

\bibitem{C2015}
{\au I. Chueshov},
{\bk Dynamics of quasi-stable dissipative systems}, 
\eds{Universitext. Springer}{Cham}{2015. xvii+390 pp}

\bibitem{CG2020}
{\au M.~Conti, A.~Giorgini},
{\ti Well-posedness for the {B}rinkman-{C}ahn-{H}illiard system with unmatched viscosities},
{\jou J.\ Differential Equations}
\no{268}{6350--6384}{2020}

\bibitem{DSS2007}
{\au H.~Ding, P.~D.~M.~Spelt, C.~Shu}, 
{\ti Diffuse interface model for incompressible two-phase flows with large density ratios}, 
{\jou J.\ Comput.\ Phys.} 
\no{226}{2078--2095}{2007}

\bibitem{FN2012}
{E. Feireisl, A. Novotn\'{y}}, 
{\ti Weak-strong uniqueness property for the full Navier-Stokes-Fourier system}, 
{\jou Arch. Ration. Mech. Anal.}
\no{204}{683--706}{2012}

\bibitem{FrigeriNonlocalAGG}
{\au S.~Frigeri},
{\ti Global existence of weak solutions for a nonlocal model for two-phase flows of incompressible fluids with unmatched densities},
{\jou Math. Models Methods Appl. Sci.}
\no{26}{1955--1993}{2016} 

\bibitem{FrigeriNonlocalDegAGG}
{\au S.~Frigeri},
{\ti On a nonlocal {C}ahn-{H}illiard/{N}avier-{S}tokes system with degenerate mobility and singular potential for incompressible fluids with different densities},
{\jou Ann. Inst. H. Poincar\'{e} Anal. Non Lin\'{e}aire} \no{38}{647--687}{2021}

\bibitem{GalGW2019}
{\au C.~G. Gal, M.~Grasselli, H.~Wu},
{\ti Global weak solutions to a diffuse interface model for incompressible two-phase flows with moving contact lines and different densities},
{\jou Arch.\ Ration.\ Mech.\ Anal.}
\no{234}{1--56}{2019}

\bibitem{Gior2021}
{\au A.~Giorgini},
{\ti  Well-posedness of the two-dimensional {A}bels-{G}arcke-{G}r\"{u}n model for two-phase flows with unmatched densities},
{\jou Calc. Var.}
\no{60}{No. 100}{2021}.

\bibitem{G2021}
{\au A.~Giorgini},
{\ti Existence and stability of strong solutions to the {A}bels-{G}arcke-{G}r\"{u}n model in three dimensions}, Interfaces Free Bound. (online first) (2022), DOI 10.4171/IFB/482.

\bibitem{GiGrWu2018}
{\au A.~Giorgini, M.~Grasselli, H.~Wu},
{\ti The {C}ahn-{H}illiard-{H}ele-{S}haw system with singular potential},
{\jou Ann. Inst. H. Poincar\'{e} C Anal. Non Lin\'{e}aire}
\no{35}{1079--1118}{2018}

\bibitem{GMT2019}
{\au A.~Giorgini, A.~Miranville, R.~Temam},
{\ti Uniqueness and Regularity for the {N}avier-{S}tokes-{C}ahn-{H}illiard system},
{\jou SIAM J.\ Math.\ Anal.}
\no{51}{2535--2574}{2019}

\bibitem{GGM2016}
{\au G.~{G}r\"un, F.~{G}uill\'{e}n-Gonz\'{a}lez, S.~{M}etzger},
{\ti On fully decoupled, convergent schemes for diffuse interface models for two-phase flow with general mass densities},
{\jou Commun. Comput. Phys.}
\no{19}{1473--1502}{2016}

\bibitem{GPV96}
  {\au M.~E.~Gurtin, D.~Polignone, J.~Vi\~{n}als},
  {\ti Two-phase binary fluids and immiscible fluids described by an order parameter},
  {\jou Math. Models Meth. Appl. Sci.}
  \no{6(6)}{815--831}{1996}
  
\bibitem{HMR2012}
{\au M.~Heida, J.~M\'{a}lek, K.~R.~Rajagopal},
{\ti On the development and generalizations of {C}ahn-{H}illiard equations within a thermodynamic framework},
{\jou Z.\ Angew.\ Math.\ Phys.}
\no{63}{145--169}{2012}  
  
\bibitem{HH77}
{\au P.~C.~Hohenberg, B.~I.~Halperin},
{\ti Theory of dynamic critical phenomena},
{\jou Rev. Mod. Phys.}
\no{49}{435--479}{1977} 
   
\bibitem{KMS2021}
{\au M.~Kalousek, S.~Mitra, A.~Schl\"omerkemper},
{\ti Existence of weak solutions to a diffuse interface model involving magnetic fluids with unmatched densities},
arXiv:2105.04291, 2021.

\bibitem{KenmochiNP1995}
{\au N.~Kenmochi, M.~Niezgodka, I.~Pawlow},
{\ti Subdifferential operator approach to the {C}ahn-{H}illiard equation with constraint},
{\jou J. Differential Equations}
\no{117(2)}{320--356}{1995}

\bibitem{LT98}
{\au J.~Lowengrub, L.~Truskinovsky},
{\ti Quasi-incompressible {C}ahn-{H}illiard fluids and topological transitions},
{\jou R. Soc. Lond. Proc. Ser. A Math. Phys. Eng. Sci.}
\no{454}{2617--2654}{1998}
    
\bibitem{McLean}
{\au W.~McLean}, 
{\bk Strongly Elliptic Systems and Boundary Integral Equations}, 
\eds{Cambridge University Press}{Cambridge, UK}{2000}    
    
\bibitem{MZ} 
{\au A.~Miranville, S.~Zelik}, 
{\ti Robust exponential attractors for {C}ahn-{H}illiard type equations with singular potentials},
{\jou Math. Methods. Appl. Sci.} 
\no{27}{545--582}{2004}

\bibitem{SSBZ2017}
{\au M.~Shokrpour Roudbari, G.~\c{S}im\c{s}ek, E.~H.~van Brummelen, K.~G.~van der Zee},
{\ti Diffuse-interface two-phase flow models with different densities: a new quasi-incompressible form and a linear energy-stable method},
{\jou Math.\ Models Meth.\ Appl.\ Sci.}
\no{28}{733--770}{2017}

\bibitem{UnifiedNSCH}
{\au M.F.P.~Ten Eikelder, K.G.~van der Zee, I.~Akkerman and D.~Schillinger},
{\ti A unified framework for Navier-Stokes Cahn-Hilliard models with non-matching densities},
{\jou Math.\ Models Meth.\ Appl.\ Sci., doi:10.1142/S0218202523500069 (2022).}


\bibitem{WeberThesis}
{\au J.~Weber},
{\ti Analysis of diffuse interface models for two-phase flows with and without surfactants},
 PhD thesis, University Regensburg, doi:10.5283/epub.34247, 2016.

\end{thebibliography}
\end{document}